\newtheorem{theorem}{Theorem}[section]
\newtheorem{lemma}[theorem]{Lemma}
\newtheorem{proposition}[theorem]{Proposition}
\newtheorem{assumption}[theorem]{Assumption}
\numberwithin{theorem}{section}
\numberwithin{figure}{section}
\numberwithin{equation}{section}
\newcommand{\qq}{q}
\newcommand{\ph}{\varphi}
\newcommand{\C}{\mathbb C}
\newcommand{\R}{\mathbb R}
\newcommand{\N}{\mathbb N}
\newcommand{\sph}{\mathbb S}
\newcommand{\fnum}{{\tt f}}
\newcommand{\gnum}{{\tt g}}
\newcommand{\Bnum}{\boldsymbol{\tt{B}}}
\newcommand{\Mnum}{\boldsymbol{\tt{M}}}
\newcommand{\rmd}{\mathrm d}
\newcommand{\eps}{\epsilon}
\newcommand{\la}{\lambda}
\newcommand\abs[1]{\left\vert#1\right\vert}
\newcommand\sabs[1]{\lvert#1\rvert}
\newcommand\norm[1]{\left\Vert#1\right\Vert}
\newcommand\snorm[1]{\Vert#1\Vert}
\newcommand\set[1]{\left\{#1\right\}}
\newcommand{\kl}[1]{\left(#1\right)}
\newcommand{\skl}[1]{(#1)}
\newcommand{\sset}[1]{\{#1\}}
\newcommand\sinner[2]{\langle#1,#2\rangle}
\newcommand\rest[2]{{#1}\vert_{#2}}
\newcommand\sfrac[2]{{#1}/{#2}}
\newcommand\ii[1]{[#1]}
\newcommand\rp{\alpha}
\newcommand*\bigcdot{\mathpalette\bigcdot@{.6}}
\newcommand*\bigcdot@[2]{\mathbin{\vcenter{\hbox{\scalebox{#2}{$\m@th#1\bullet$}}}}}
\newcommand{\signal}{\boldsymbol  x}
\newcommand{\dsignal}{\boldsymbol  d}
\newcommand{\zsignal}{\boldsymbol  z}
\newcommand{\res}{\boldsymbol  r}
\newcommand{\asignal}{\boldsymbol \xi }
\newcommand{\fsignal}{\boldsymbol f }
\newcommand{\data}{\boldsymbol y}
\newcommand{\gdata}{\boldsymbol g}
\newcommand{\X}{\mathbb{X}}
\newcommand{\Y}{\mathbb{Y}}
\DeclareMathOperator*{\dom}{\mathcal{D}}
\newcommand{\To}{\mathbf{F}}
\newcommand{\Mo}{\mathbf{M}}
\def\Plus{\texttt{+}}
\title{The Averaged Kaczmarz Iteration for Solving Inverse Problems}
\author{Housen Li} 
\affil{National University of Defense Technology\\
137 Yanwachi street, 410073 Changsha, China\\
Email: {\tt housen.li@outlook.com}} 
\author{Markus Haltmeier} 
\affil{Department of Mathematics, University of Innsbruck\\
Technikestra{\ss}e 13, A-6020 Innsbruck, Austria\\
Email: {\tt markus.haltmeier@uibk.ac.at}}
\date{January 3, 2017}
\begin{document}
\maketitle

\begin{abstract}
We introduce a new iterative regularization method for solving inverse problems that  can be
written as systems of linear or non-linear  equations in Hilbert spaces.
The proposed averaged Kaczmarz (AVEK)  method  can be seen as a hybrid method  between the Landweber
and the Kaczmarz method. As the Kaczmarz method, the  proposed method only requires evaluation
of one direct and one adjoint sub-problem per iterative update.
On the other, similar to the Landweber iteration,  it uses an average over previous auxiliary  iterates   which
increases stability. We present a convergence analysis of the AVEK iteration. Further, detailed numerical
studies are presented for a tomographic image reconstruction problem, namely the limited data problem in
photoacoustic tomography.  Thereby, the AVEK  is compared with other iterative regularization methods
including standard Landweber and Kaczmarz iterations, as well as recently proposed  accelerated versions 
based on error minimizing relaxation strategies.

\bigskip\noindent\textbf{Keywords:}
Inverse problems, system of ill-posed equations, regularization method, Kaczmarz iteration,  ill-posed equation,  convergence analysis,  tomography, circular Radon transform.

\bigskip\noindent\textbf{AMS Subject Classification:}
	65J20; 65J22; 45F05.
\end{abstract}

\section{Introduction}
\label{sec:intro}

In this paper, we study the stable solution of linear or non-linear systems of operator
equations of the form
\begin{equation}\label{eq:ip}
	\To_i(\signal) =  \data_i  \quad \text{ for } i=1, \dots, n \,.
\end{equation}
Here $\To_i \colon \dom(\To_i)  \subseteq \X \to \Y_i$  are possibly nonlinear operators
between Hilbert spaces $\X$ and $\Y_i$  with domains of definition $\dom(\To_i)$. We are in particular interested in the case that we only have approximate data
$\data_i^\delta \in \Y_i$ available, which satisfy an  estimate  of the form  $\snorm{\data_i^\delta  -  \data_i} \leq \delta_i$
for some  noise levels $\delta_i >0$.   Moreover, we focus on the ill-posed (or ill-conditioned)
case, where standard solution methods  for~\eqref{eq:ip} are sensitive  to perturbations.
Many inverse problems in biomedical imaging, geophysics or engineering sciences
can be written in such a form (see, for example,  \cite{engl1996regularization,natterer01mathematical,scherzer2009variational}.)
For its solution one has to use regularization methods,  which are based on approximating~\eqref{eq:ip} by neighboring but more stable problems.

There are at least two basic classes of solution approaches for inverse problems of the form~\eqref{eq:ip}, namely (generalized) Tikhonov regularization on the one and iterative regularization on the other hand. (Notice that there are methods sharing structures of both classes, for example iterated Tikhonov regularization~\cite{KiCh79} or Lardy's method~\cite{Lar75}.) These approaches are based
on rewriting~\eqref{eq:ip} as a single equation $\To(\signal) = \data$  with forward operator $\To =  (\To_i)_{i=1}^n$
and exact data $\data  = (\data_i)_{i=1}^n$. In Tikhonov regularization, one defines  approximate solutions as minimizers
of the Tikhonov functional $ \frac{1}{n} \sum_{i=1}^n \snorm{\To_i(\signal)- \data^\delta_i}^2 + \la \snorm{\signal-\signal_0}^2
$, which is the weighted combination of the  residual term $\sum_{i=1}^n \snorm{\To_i(\signal)- \data^\delta_i}^2$
that enforces  all equations to be approximately  satisfied, and the regularization term $\snorm{\signal - \signal_0}^2$ that stabilizes the inversion process; $\lambda > 0$ is usually referred to as the regularization parameter.
In iterative regularization methods,  stabilization  is achieved  via early stopping of
iterative  schemes.  For this class of methods, one develops special
iterative optimization techniques designed  for minimizing the un-regularized  residual term
$ \sum_{i=1}^n \snorm{\To_i(\signal)- \data^\delta_i}^2 $. The iteration index in this case plays the
role of the  regularization parameter which has to be carefully chosen depending on available information about the noise and the unknowns to be recovered.

In this paper we  introduce a new member of the class  of  iterative regularization methods, named  averaged Kaczmarz (AVEK) iteration. The  method combines  advantages of  two main iterative regularization techniques, namely  the Landweber  and the Kaczmarz iteration.

\subsection{Iterative regularization methods}

The most basic iterative method for solving inverse problems is the Landweber iteration \cite{engl1996regularization,hanke1995convergence,kaltenbacher2008iterative,landweber1951iteration},
which reads
\begin{equation} \label{eq:landweber}
	\forall k \in \N \colon \quad
     \signal^\delta_{k+1}  \coloneqq   \signal^\delta_k  -  \frac{s_k}{n} \sum_{i=1}^n \To_i'(\signal^\delta_k)^*\kl{ \To_i(\signal_k^\delta) - \data^\delta_i}  \,.
\end{equation}
Here $ \To_i'(\signal)^*$  is the  Hilbert space adjoint of  the derivative of $\To_i$,
$s_k$ is the step size and $\signal_1^\delta$ the initial guess.  The  Landweber iteration renders
a regularization method when stopped according to Morozov's discrepancy principle, which stops the iteration at the smallest index $k_\star$  such that
$\sum_{i=1}^n \snorm{\To_i (\signal^\delta_{k_\star}) - \data^\delta_i}^2 \leq n (\tau \delta)^2 $ for some constant $\tau>1$.
A convergence analysis of the non-linear Landweber iteration has first been derived  in~\cite{hanke1995convergence}. Among others, similar results have subsequently been established for the steepest-descent method \cite{neubauer1995convergence}, the preconditioned Landweber iteration \cite{egger2005preconditioning},  or Newton-type
methods~\cite{blaschke1997convergence,rieder1999regularization}.

Each iterative update in~\eqref{eq:landweber} can be  numerically  quite expensive, since it requires
solving forward and adjoint  problems for all of the $n$ equations
in \eqref{eq:ip}.   In  situations where $n$ is large and evaluating  the forward and adjoint  problems is
costly, methods like the Landweber-Kaczmarz iteration (see \cite{EHN17,haltmeier2007kaczmarz2,haltmeier2007kaczmarz1,KiLe14,kowar2002})
\begin{equation} \label{eq:kac}
\forall k \in \N \colon \quad
\signal^\delta_{k+1}  \coloneqq   \signal^\delta_k  -   s_k \rp_k   \To_{\ii{k}}'(\signal^\delta_k)^*\kl{ \To_{\ii{k}}(\signal_k^\delta) - \data^\delta_{\ii{k}}}  \,,
\end{equation}
where $\ii{k} \coloneqq  (k-1 \mod  n)+1$,
are often much faster. The acceleration comes from the fact that the update in~\eqref{eq:kac} only requires
the solution of one forward and one adjoint problem instead of solving several  of them, but  nevertheless often yields a comparable decrease per iteration of the reconstruction error.
 The additional parameters $\rp_k \in \set{0,1}$ effect that  in the noisy data case  some of the iterative updates are skipped  which  renders~\eqref{eq:kac} a regularization method.
 Such a skipping strategy has been introduced in~\cite{haltmeier2007kaczmarz1} for the Landweber-Kaczmarz
 iteration and later, among others, combined with  steepest  descent and Levenberg-Marquardt  type iterations~\cite{baumeister2010levenberg,decesaro2008steepest}.

Kaczmarz type methods often perform well in practice. However, unless  allowing asymptotically vanishing step sizes, even for well-posed problems, they do not  converge to a  single point.  This  can easily be seen in the case of two linear equations  in $\R$ without a  common solution where the Kaczmarz method with constant step size has different accumulation points~\cite[Section 2]{Luo91} (compare also~\cite{censor1983strong, Sol98}).  Opposed to that, the AVEK method  that we introduce in this paper can be shown to converge in such a situation. Still, one step in AVEK has computational costs similar to the Kaczmarz method (if evaluating  the forward operators and their adjoints are the computationally most expensive parts). Note that \eqref{eq:landweber} and~\eqref{eq:kac} might be called simultaneous and sequential, respectively~\cite{EHN17,jiang2003convergence,NAE17,nikazad2016convergence}. Further, instead of using the average in \eqref{eq:landweber} one might  also consider convex combinations of $\To_i'(\signal^\delta_k)^*\kl{ \To_i(\signal_k^\delta) - \data^\delta_i}$ to define  the iterative updates in simultaneous schemes (as is Cimmino's method~\cite{Cim38}).

\subsection{The averaged Kaczmarz (AVEK) iteration}

The general AVEK iteration is defined  by
\begin{align} \label{eq:avek1}
\signal_{k+1}^{\delta}& \coloneqq \sum_{\ell = k-n+1}^k \omega_{k-\ell+1} \asignal_\ell^{\delta} \qquad{\text{for } k \ge n}
\\  \label{eq:avek2}
 \asignal_\ell^\delta & \coloneqq \signal_\ell^{\delta} - s_\ell \rp_\ell \To_{\ii{\ell}}'(\signal^\delta_\ell)^*
\kl{ \To_{\ii{\ell}}(\signal^\delta_\ell) - \data^\delta_{\ii{\ell}}} \\
\label{eq:avek3}
\rp_\ell   &\coloneqq
\begin{cases}
      1  &\text{if } \snorm{   \To_{\ii{\ell}}(\signal^\delta_\ell) - \data^\delta_{\ii{\ell}} }
      \geq \tau_{\ii{\ell}} \delta_{\ii{\ell}} \\
      0  & \text{otherwise}
\end{cases} \,,
\end{align}
{where $\signal^\delta_1,\ldots,\signal^\delta_n$ are user-specified initial values, and $\omega_i \ge 0$ are fixed weights satisfying $\sum_{i = 1}^n \omega_i = 1$.}
Instead of discarding the previous computations, {the AVEK iteration remembers} the last  Kaczmarz type auxiliary iterates $\asignal_\ell^{\delta}$
and the update  $\signal_{k+1}^{\delta}$ is defined as the {weighted} average over them.
The  parameters $\rp_\ell $   effect that no update for $\asignal_\ell^{\delta}$ is performed if
$\snorm{   \To_{\ii{\ell}}(\signal^\delta_\ell ) - \data^\delta_{\ii{\ell }} }$  is sufficiently small;
$\tau_i \geq 0$ are control parameters. As the Kaczmarz  iteration, the AVEK iteration only requires evaluating a single gradient
$\To_{i}'(\signal )^*\skl{ \To_{i}(\signal) - \data^\delta_i}$ per iterative update which usually is the numerically most expensive part for evaluating \eqref{eq:avek1}-\eqref{eq:avek3}.  As the Landweber iteration~\eqref{eq:landweber}, {if every $\omega_i$ is positive}, each update in AVEK uses
information of all equations which enhances stability.
{Notice that the Landweber-Kaczmarz iteration in~\eqref{eq:kac} is a special case of the general AVEK iteration with $\omega_1 = 1$ and $\omega_i = 0$ for $i \ge 2$. }

Throughout this paper, we focus on the AVEK with equal weights
$\omega_1 = \cdots = \omega_n = 1/n$. In what follows, by AVEK we always refer to this special case unless explicitly stated. Note,  that the AVEK update \eqref{eq:avek1} in this case can alternatively be written as $\signal^\delta_{k+1} = \signal^\delta_{k} +  \skl{\asignal^\delta_k - \asignal^\delta_{k-n}}/n$. The modified update formula only requires two additions in the space $\X$ and therefore can numerically be more efficient than evaluating~\eqref{eq:avek1}. We further note that in the original form  \eqref{eq:avek1}-\eqref{eq:avek3}, AVEK requires storing $n$ auxiliary updates $ \asignal_\ell^\delta \in \X$. When storage is a limited aspect and $n$ is large, this might be problematic.
However,  the required storage can be reduced by saving the residuals  $\To_{\ii{\ell}}(\signal^\delta_\ell) - \data^\delta_{\ii{\ell}} \in \Y_i$ instead of the auxiliary updates. In the numerical  implementation $\prod_{i=1}^n \Y_i$ (after discretization) will typically have a similar  dimension as  $\X$ (after discretization).
As a consequence, storing all $n$ residuals only requires a storage similar to  saving a single iterate.

In this paper we establish a convergence analysis of \eqref{eq:avek1}-\eqref{eq:avek3} for exact  and
noisy data (see Section~\ref{sec:convergence}).
These results are most closely related  to the convergence analysis of other iterative regularization methods such as
the  Landweber and steepest {descent} methods \cite{hanke1995convergence,neubauer1995convergence}
and extensions to Kaczmarz type
iterations~\cite{decesaro2008steepest,haltmeier2007kaczmarz1,leitao16projective}. However, the AVEK iteration
is new and we are not aware of a convergence analysis for any similar iterative regularization method.
We point out,  that the AVEK shares some similarities with the  incremental gradient method of~\cite{blatt2007convergent} and the averaged stochastic gradient method of  \cite{schmidt2017minimizing} (both studied in finite dimensions). However, the iterations of \cite{blatt2007convergent,schmidt2017minimizing} are notably different from the AVEK method as they use an average  of gradients instead of an average of auxiliary iterates (cf.~Section~\ref{sec:conclusion}). {Given the large amount of publications on averaged incremental gradient and stochastic gradient methods over the last couple of years it seems surprising that these  methods have not been extended in the spirit of  AVEK so far. The present work might initiate future research in such  directions.}

\subsection{Outline}
\label{ssec:outline}

The rest of this paper is organized as follows. In Section~\ref{sec:convergence}
we present the convergence analysis of the AVEK method under typical assumptions for
iterative regularization methods. As main results we show weak convergence of AVEK
in the case of exact data (see Theorem~\ref{thm:exact}) and {(weak and strong) convergence}
as the noise level tends to zero (see Theorem~\ref{thm:noisy}).
The proof of an important auxiliary result (Lemma~\ref{lem:diff}) required
for the convergence analysis is presented in Appendix~\ref{app:deconv}.
In Section~\ref{sec:num}, we apply AVEK method to the limited view problem for the
circular Radon transform and present a numerical comparison with the
Landweber  and the Kaczmarz method. The paper concludes with a summary
presented in Section~\ref{sec:conclusion} and a discussion of open issues and
possible extensions of AVEK.

\section{Convergence analysis}
\label{sec:convergence}

In this section we establish  the convergence analysis of the AVEK method.
For that purpose we first fix the main assumptions
in Subsection~\ref{ssec:preliminaries} and derive the basic quasi-monotonicity
property of AVEK in Subsection~\ref{ssec:quasi}. The actual convergence analysis is presented in Subsections~\ref{ssec:exact}  and~\ref{ssec:noisy}.

\subsection{Preliminaries}
\label{ssec:preliminaries}

Throughout this paper $\To_i \colon \dom(\To_i)  \subseteq \X \to \Y_i$ are continuously  Fr\'echet
differentiable maps for $i \in \sset{1, \dots, n }$.
We consider the system~\eqref{eq:ip}, which can be written as a single equation $\To(\signal) = \data$ with forward
operator $\To =  (\To_i)_{i=1}^n$ and exact data $\data  = (\data_i)_{i=1}^n$ in $\Y \coloneqq \prod_{i=1}^n \Y_i$. Here $\data \in \Y$ are the exact data and $\data^\delta   = (\data_i^\delta)_{i=1}^n \in \Y$ denote noisy data satisfying $\snorm{\data_i - \data_i^\delta}\leq \delta_i$ with $\delta_i \geq 0$.

For the convergence analysis of the AVEK method established below we assume that
the following additional assumptions  are satisfied.

\begin{assumption}[Main\label{ass} conditions for the convergence analysis] \mbox{}
\begin{enumerate}[leftmargin=4em,label = (A\arabic*)]
\item \label{ass1}
There are $\signal_0 \in \X$, $\rho>0$ such that $B_\rho (\signal_0) \coloneqq \sset{\signal\mid\norm{\signal - \signal_0}\le \rho} \subseteq \bigcap_{i \in \sset{1, \dots , n}}
\dom(\To_i)$.

\item \label{ass1a}
For every $ i \in\sset{ 1,\ldots,n}$, it holds  $\sup\sset{ \snorm{\To_i'(\signal)} \mid \signal \in B_{\rho}(\signal_0)}< \infty$.

\item  \label{ass2}
For every $ i \in\sset{ 1,\ldots,n}$, there exists a constant $\eta_i \in [0, 1/2)$ such that
\begin{multline} \label{eq:tcc}
\forall \signal_1, \signal_2 \in B_{\rho}(\signal_0) \colon \quad  \snorm{ \To_i(\signal_1) - \To_i(\signal_2) -
\To_i'(\signal_1)( \signal_1 - \signal_2) }
\\ \leq
     \eta_i \snorm{ \To_i(\signal_1)-\To_i(\signal_2) } \,.
 \end{multline}
Equation \eqref{eq:tcc} is often referred to as local tangential cone condition.

\item \label{ass3}
For the exact data $\data \in \Y$, there exists a solution of \eqref{eq:ip} in $B_{\rho/3}(\signal_0) $.
\end{enumerate}
\end{assumption}

From Assumption~\ref{ass} it follows that~\eqref{eq:ip} has at least one $\signal_0$-minimum norm solution
denoted $\signal^\Plus \in \X$. Such a minimal norm solution  satisfies
\begin{equation*}
\label{eq:mns}
	\snorm{\signal^\Plus - \signal_0}
	=
	\inf\sset{ \snorm{\signal-\signal_0} \mid \signal \in B_{\rho}(\signal_0) \text{ and } \To(\signal) = y} \,.
\end{equation*}
The AVEK  iteration is defined by \eqref{eq:avek1}-\eqref{eq:avek3}.
There we always choose  the   initialisation  such that
$\signal^\delta_1, \ldots, \signal^\delta_n \in B_{\rho/3}(\signal_0)$ and assume
that $\tau_i > {2(1+\eta_i)}/\skl{1-2\eta_i}$.

\subsection{Quasi-monotonicity}
\label{ssec:quasi}

Opposed to the Landweber and the Kaczmarz method, for the
AVEK method the  reconstruction error  $\snorm{\signal^\delta_k - \signal^*}$, where $\signal^*$ is a solution of  \eqref{eq:ip}, is not strictly decreasing. However,  we can show the  following quasi-monotonicity
property which plays a central role in our convergence analysis.

\begin{proposition}[Quasi-monotonicity] \label{prop:mon}
Let $\signal^* \in B_{\rho}(\signal_0)$ be  any solution of~\eqref{eq:ip}. Suppose that $\signal^\delta_k$ is defined by~\eqref{eq:avek1}-\eqref{eq:avek3}, and that Assumption~\ref{ass} holds true.
Additionally, suppose that the step sizes $s_k$ are chosen in such a way that
\begin{equation}\label{eq:stepsz1}
s_k  \snorm{\To_i'(\signal)}^2 \le 1\quad\text{ for every }
i, k  \text{ and  } \signal \in B_{\rho}(\signal_0) \,.
\end{equation}
Then  for every  $k \geq n$ it holds that $\signal^\delta_k \in B_{\rho}(\signal_0)$ and
\begin{multline} \label{eq:mon}
\snorm{\signal_{k+1}^{\delta} - \signal^*}^2 \le \frac{1}{n} \sum_{\ell = k-n+1}^k \snorm{\signal_\ell^\delta - \signal^*}^2 \\
-\frac{1}{n}\sum_{\ell = k-n+1}^k s_\ell\rp_\ell \snorm{\To_{\ii{\ell}}(\signal_\ell^\delta) - \data_{\ii{\ell}}^\delta}\kl{(1-2\eta_{\ii{\ell}})\snorm{\To_{\ii{\ell}}(\signal_\ell^\delta) -\data_{\ii{\ell}}^{\delta}}- 2(1+\eta_{\ii{\ell}}) \delta_{\ii{\ell}}}.
\end{multline}
\end{proposition}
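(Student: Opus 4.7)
My plan is to reduce the proposition to a single Kaczmarz step estimate on the auxiliary iterate $\asignal_\ell^\delta$ and then combine the individual bounds by convexity of the squared norm. The containment $\signal_k^\delta \in B_\rho(\signal_0)$ and the quasi-monotonicity estimate \eqref{eq:mon} will both be consequences of the same per-iterate inequality, and I will prove the containment by induction on $k$ using the specific solution $\signal^\ddagger \in B_{\rho/3}(\signal_0)$ guaranteed by \ref{ass3}, while \eqref{eq:mon} itself will be written for an arbitrary solution $\signal^* \in B_\rho(\signal_0)$.

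The key step is the single-step bound
\[
\snorm{\asignal_\ell^\delta - \signal^*}^2 \le \snorm{\signal_\ell^\delta - \signal^*}^2 - s_\ell \rp_\ell \snorm{\res_\ell}\bigl((1-2\eta_{\ii{\ell}})\snorm{\res_\ell} - 2(1+\eta_{\ii{\ell}})\delta_{\ii{\ell}}\bigr),
\]
where $\res_\ell \coloneqq \To_{\ii\ell}(\signal_\ell^\delta)- \data_{\ii\ell}^\delta$. To derive it I expand
$\snorm{\asignal_\ell^\delta - \signal^*}^2$ using \eqref{eq:avek2}, producing a cross term
$-2s_\ell\rp_\ell\sinner{\res_\ell}{\To_{\ii\ell}'(\signal_\ell^\delta)(\signal_\ell^\delta - \signal^*)}$ and a quadratic term $s_\ell^2 \rp_\ell^2 \snorm{\To_{\ii\ell}'(\signal_\ell^\delta)^*\res_\ell}^2$. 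For the cross term I write
$\To_{\ii\ell}'(\signal_\ell^\delta)(\signal_\ell^\delta - \signal^*) = \bigl(\To_{\ii\ell}(\signal_\ell^\delta) - \data_{\ii\ell}\bigr) + w$, where $\snorm{w} \le \eta_{\ii\ell}\snorm{\To_{\ii\ell}(\signal_\ell^\delta) - \data_{\ii\ell}}$ by the tangential cone condition \eqref{eq:tcc} together with $\To_{\ii\ell}(\signal^*)=\data_{\ii\ell}$, and control $\snorm{\data_{\ii\ell}-\data_{\ii\ell}^\delta}\le \delta_{\ii\ell}$ by the noise bound; two Cauchy-Schwarz applications together with a triangle inequality then give a lower bound of $(1-\eta_{\ii\ell})\snorm{\res_\ell}^2 - (1+\eta_{\ii\ell})\snorm{\res_\ell}\delta_{\ii\ell}$. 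For the quadratic term, the step size condition \eqref{eq:stepsz1} contributes $s_\ell^2\snorm{\To_{\ii\ell}'(\signal_\ell^\delta)^*\res_\ell}^2 \le s_\ell\snorm{\res_\ell}^2$, and since $\rp_\ell\in\set{0,1}$ we have $\rp_\ell^2=\rp_\ell$. Combining the three pieces and collecting coefficients yields exactly the single-step estimate above, with the decisive $1-2\eta_{\ii\ell}$ appearing because the $1$ from the quadratic term is subtracted from the $2(1-\eta_{\ii\ell})$ coming from the cross term.

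Given the single-step bound, the averaging step \eqref{eq:avek1} with equal weights $1/n$ and convexity of $\enorm^2$ yield
$\snorm{\signal_{k+1}^\delta - \signal^*}^2 \le \tfrac{1}{n}\sum_{\ell=k-n+1}^{k}\snorm{\asignal_\ell^\delta-\signal^*}^2$, and substituting the single-step bound gives the claimed inequality \eqref{eq:mon}. For the containment $\signal_k^\delta \in B_\rho(\signal_0)$, I induct on $k$: the base case $k\le n$ is given by the initialization, and in the inductive step I apply the single-step bound with $\signal^* = \signal^\ddagger$. Whenever $\rp_\ell = 1$, the discrepancy test forces $\snorm{\res_\ell}\ge \tau_{\ii\ell}\delta_{\ii\ell}$, so the condition $\tau_{\ii\ell}>2(1+\eta_{\ii\ell})/(1-2\eta_{\ii\ell})$ makes $(1-2\eta_{\ii\ell})\snorm{\res_\ell}-2(1+\eta_{\ii\ell})\delta_{\ii\ell}\ge 0$; when $\rp_\ell=0$ the conclusion is trivial. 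Hence $\snorm{\asignal_\ell^\delta-\signal^\ddagger}\le \snorm{\signal_\ell^\delta - \signal^\ddagger}$, and the induction together with $\signal^\ddagger \in B_{\rho/3}(\signal_0)$ and $\signal^\delta_1,\dots,\signal^\delta_n\in B_{\rho/3}(\signal_0)$ yields $\snorm{\signal_{k+1}^\delta - \signal^\ddagger}\le 2\rho/3$, so $\signal_{k+1}^\delta \in B_\rho(\signal_0)$ by the triangle inequality.

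The main obstacle will be bookkeeping in the cross term so that the factor $1-2\eta_{\ii\ell}$ (rather than $1-\eta_{\ii\ell}$) emerges after absorbing the quadratic term; this is where both the step-size condition and the TCC play together, and where the constraint $\eta_i < 1/2$ is essential. The remaining steps (Jensen's inequality for averaging and the inductive containment) are then essentially formal.
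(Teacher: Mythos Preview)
Your proposal is correct and follows essentially the same approach as the paper: derive the single Kaczmarz-step estimate from the tangential cone condition and the step-size bound, average via convexity of the squared norm, and induct using the distinguished solution in $B_{\rho/3}(\signal_0)$ to obtain the containment $\signal_k^\delta\in B_\rho(\signal_0)$. The only organizational difference is that the paper first pretends \eqref{eq:tcc} and \eqref{eq:stepsz1} hold on all of $\X$, derives \eqref{eq:mon} globally, and only afterwards localizes via the induction, whereas you run the induction directly; both arguments are equivalent, and your version is arguably cleaner since it also makes explicit (via $\rp_\ell=1\Rightarrow\snorm{\res_\ell}\ge\tau_{\ii\ell}\delta_{\ii\ell}$ and the choice of $\tau_i$) why the correction term is nonnegative, a point the paper leaves implicit. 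One small clarification worth making when you write it out: your inductive hypothesis should be $\snorm{\signal_\ell^\delta-\signal^\ddagger}\le 2\rho/3$ (which implies $\signal_\ell^\delta\in B_\rho(\signal_0)$), not merely $\signal_\ell^\delta\in B_\rho(\signal_0)$, so that the bound propagates through the average.
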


\begin{proof}
Assume for the moment that~\eqref{eq:tcc} and~\eqref{eq:stepsz1} are satisfied on the whole space $\X$ instead
only on  $B_{\rho}(\signal_0)$.
Then, for each $\ell \in\N$, we have
\begin{align*}
\| \asignal_\ell^\delta & - \signal^*\|^2  - \snorm{\signal^\delta_\ell - \signal^*}^2
\\  & =
\snorm{\asignal_\ell^\delta - \signal^\delta_\ell}^2 + 2\sinner{\asignal_\ell^\delta - \signal^\delta_\ell}{\signal^\delta_\ell - \signal^*}
\\  & \leq
s_\ell^2 \rp_\ell^2 \snorm{\To_{\ii{\ell}}'(\signal_\ell^\delta)}^2 \snorm{\To_{\ii{\ell}}(\signal_\ell^\delta) - \data_{\ii{\ell}}^\delta}^2
-2 s_\ell\rp_\ell \sinner{\To_{\ii{\ell}}'(\signal_\ell^\delta)^*\skl{\To_{\ii{\ell}}(\signal_\ell^\delta) - \data_{\ii{\ell}}^\delta}}{\signal_\ell^{\delta} - \signal^*} \\
& \leq
s_\ell\rp_\ell \snorm{\To_{\ii{\ell}}(\signal_\ell^\delta) - \data_{\ii{\ell}}^\delta}^2 - 2 s_\ell\rp_\ell \sinner{\To_{\ii{\ell}}(\signal_\ell^\delta) - \data_{\ii{\ell}}^\delta}{\To_{\ii{\ell}}'(\signal_\ell^\delta)\skl{\signal_\ell^{\delta} - \signal^*}}
\\ & =
s_\ell\rp_\ell \snorm{\To_{\ii{\ell}}(\signal_\ell^\delta) - \data_{\ii{\ell}}^\delta}^2
- 2 s_\ell\rp_\ell \sinner{\To_{\ii{\ell}}(\signal_\ell^\delta) - \data_{\ii{\ell}}^\delta}{\To_{\ii{\ell}}(\signal_\ell^\delta) - \data_{\ii{\ell}}^\delta}
\\ &
\qquad- 2 s_\ell\rp_\ell \sinner{\To_{\ii{\ell}}(\signal_\ell^\delta) - \data_{\ii{\ell}}^\delta}{\To_{\ii{\ell}}(\signal^*) - \To_{\ii{\ell}}(\signal_\ell^\delta) + \To_{\ii{\ell}}'(\signal_\ell^\delta)\skl{\signal_\ell^{\delta} - \signal^*}}
\\ &\qquad
-  2 s_\ell\rp_\ell \sinner{\To_{\ii{\ell}}(\signal_\ell^\delta) - \data_{\ii{\ell}}^\delta}{ \data_{\ii{\ell}}^\delta - \To_{\ii{\ell}}(\signal^*)}\\
& \leq -s_\ell\rp_\ell \snorm{\To_{\ii{\ell}}(\signal_\ell^\delta) - \data_{\ii{\ell}}^\delta}^2
+ 2 \eta_{\ii{\ell}} s_\ell\rp_\ell \snorm{\To_{\ii{\ell}}(\signal_\ell^\delta) - \data_{\ii{\ell}}^\delta} \snorm{\To_{\ii{\ell}}(\signal_\ell^\delta) - \To_{\ii{\ell}}(\signal^*)} \\ &\qquad
+ 2 s_\ell\rp_\ell \delta_{\ii{\ell}} \snorm{\To_{\ii{\ell}}(\signal_\ell^\delta) - \data_{\ii{\ell}}^\delta}
\\ & \leq
- s_\ell\rp_\ell \snorm{\To_{\ii{\ell}}(\signal_\ell^\delta) - \data_{\ii{\ell}}^\delta}\kl{(1-2\eta_{\ii{\ell}})\snorm{\To_{\ii{\ell}}(\signal_\ell^\delta) -\data_{\ii{\ell}}^{\delta}}- 2(1+\eta_{\ii{\ell}}) \delta_{\ii{\ell}}}.
\end{align*}
From Jensen's inequality {(or the triangle inequality)} it follows that
\begin{multline*}
 \snorm{\signal^\delta_{k+1} - \signal^*}^2    =
 \Bigl\Vert \frac{1}{n}\sum_{\ell = k-n+1}^k (\asignal^\delta_\ell - \signal^*)\Bigr\Vert^2
 \le  \frac{1}{n}\sum_{\ell = k-n+1}^k \snorm{\asignal^\delta_\ell - \signal^*}^2
 \le   \frac{1}{n} \sum_{\ell = k-n+1}^k \snorm{\signal_\ell^\delta - \signal^*}^2
 \\
 -\frac{1}{n}\sum_{\ell = k-n+1}^k s_\ell\rp_\ell \snorm{\To_{\ii{\ell}}(\signal_\ell^\delta) - \data_{\ii{\ell}}^\delta}
\kl{ (1-2\eta_{\ii{\ell}})\snorm{\To_{\ii{\ell}}(\signal_\ell^\delta) -\data_{\ii{\ell}}^{\delta}} - 2(1+\eta_{\ii{\ell}}) \delta_{\ii{\ell}} } \,.
\end{multline*}
Recall that there exists a solution $\asignal^*$ of  \eqref{eq:ip} in  $B_{\rho/3}(\signal_0)$ (which can be different from $\signal^*$).
Applying the above inequality  to $\asignal^*$  we obtain   $\snorm{\signal_{k+1}^{\delta} - \asignal^*}^2 \leq \frac{1}{n} \sum_{\ell = k-n+1}^k \snorm{\signal_\ell^\delta - \asignal^*}^2$.
The assumption $\forall \ell \leq k \colon \snorm{\signal_\ell^\delta - \asignal^*}\le 2\rho/3$ therefore implies
$\snorm{\signal_{k+1}^\delta - \asignal^*} \leq  2\rho/3$.
An inductive argument shows that $\snorm{\signal_k^\delta - \asignal^*} \le 2\rho/3$  indeed holds for all $k \in \N$.
Consequently, $\snorm{\signal_k^\delta - \signal_0} \leq \snorm{\signal_k^\delta - \asignal^*} + \snorm{\asignal^* - \signal_0}
\leq \rho$ and therefore $\signal_k^\delta \in B_{\rho}(\signal_0)$.
Thus, for \eqref{eq:mon} to hold, it is in fact sufficient that \eqref{eq:tcc} and~\eqref{eq:stepsz1} are satisfied on
$B_{\rho}(\signal_0) \subseteq \X$.
\end{proof}

The quasi-monotonicity  property~\eqref{eq:mon}
implies that the squared error $\snorm{\signal_{k+1}^{\delta} - \signal^*}^2$
is smaller than the average over
$n$-previous squared errors.  This is a basic ingredient  for our convergence analysis.
However, the absence of  strict monotonicity makes the analysis more
involved than the one of the Landweber and Kaczmarz iterations.

\subsection{Exact data case}
\label{ssec:exact}

In  this subsection  we consider the case of  exact data where
$\delta_i = 0$ for every $i  \in \set{1, \dots, n}$. In this case, we have $\rp_\ell = 1$ and
we write  the  AVEK iteration in the form
\begin{equation}\label{eq:avek-e1}
    \forall k \geq  n \colon \quad
    \signal_{k+1}  = \frac{1}{n} \sum_{\ell = k-n+1}^k \kl{ \signal_\ell  - s_\ell \To_{\ii{\ell}}'(\signal_\ell)^*
    \skl{ \To_{\ii{\ell}}(\signal_\ell) - \data_{\ii{\ell}}}}.
\end{equation}
We will prove weak convergence of~\eqref{eq:avek-e1} to a solution  of \eqref{eq:ip}.
To that end we start  with the following technical lemma.

\begin{lemma}\label{lem:conv}
Assume that $(p_k)_{k\in \N}$ is a sequence of non-negative numbers
satisfying $p_{k+1} \le \frac{1}{n}\sum_{\ell = k - n+1}^{k} p_\ell $ for all $k \geq n$.
Then  $(p_k)_{k\in \N}$ is convergent.
\end{lemma}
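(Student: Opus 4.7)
The plan is to track the sliding window maximum and argue that any gap between $\limsup$ and $\liminf$ of $p_k$ must collapse after finitely many steps.

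First, I would define the window maximum $M_k \coloneqq \max\sset{p_{k-n+1},\ldots,p_k}$ for $k \ge n$. The hypothesis immediately gives $p_{k+1} \le \frac{1}{n}\sum_{\ell=k-n+1}^k p_\ell \le M_k$, so replacing $p_{k-n+1}$ by $p_{k+1}$ in the defining window shows $M_{k+1} \le M_k$. Since $(M_k)$ is non-increasing and bounded below by zero, it converges to some limit $L \ge 0$, and $M_k \ge L$ for every $k$. A short subsequence argument (each $M_k$ is attained at some index $j_k \in [k-n+1,k]$ with $j_k \to \infty$) identifies $L$ with $\limsup_{k\to\infty} p_k$, and of course $\liminf p_k \le L$.

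The heart of the argument is to rule out $\ell \coloneqq \liminf p_k < L$ by contradiction. Given such a gap, I would pick $\eps \in (0, (L-\ell)/n)$ and use $\limsup p_k = L$ to find $N$ with $p_k < L+\eps$ for all $k \ge N$. Since $\liminf p_k = \ell$, there exists $k_0 \ge N + n$ with $p_{k_0} < \ell + \eps$. For any $k \in \sset{k_0,\ldots,k_0+n-1}$ the window $[k-n+1,k]$ contains the index $k_0$, and all its indices lie above $N$; hence
\begin{equation*}
p_{k+1} \le \frac{1}{n}\Bigl((n-1)(L+\eps) + (\ell+\eps)\Bigr) = (L+\eps) - \frac{L-\ell}{n} < L.
\end{equation*}
Taking the maximum over $k+1 \in \sset{k_0+1,\ldots,k_0+n}$ yields $M_{k_0+n} < L$, contradicting $M_k \ge L$ for all $k$. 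Thus $\liminf p_k = \limsup p_k = L$, and $(p_k)$ converges.

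The main obstacle I expect is that the sequence $(p_k)$ itself need not be monotone and neither is the sliding window minimum, so one cannot simply invoke monotone convergence on both ends. The key insight making the proof work is that a single dip of $p$ at index $k_0$ propagates through the averaging recursion for the next $n$ steps, which is enough to push the \emph{maximum} over a full window strictly below $L$, and this collides with the monotone limit of $(M_k)$.
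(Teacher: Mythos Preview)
Your proposal is correct and follows essentially the same approach as the paper: both define the sliding window maximum, show it is non-increasing and hence convergent with limit equal to $\limsup p_k$, and then derive a contradiction by showing that a single dip below the limit propagates through $n$ consecutive averages to force the window maximum strictly below its limit. The only cosmetic differences are in the $\epsilon$-bookkeeping and that you phrase the contradiction as $M_{k_0+n}<L$ while the paper phrases it as $\limsup p_k<c$; your choice of $k_0\ge N+n$ handles the window boundary slightly more cleanly than the paper's version.
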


\begin{proof}
Define $\qq_k \coloneqq \max \set{ p_\ell  \mid \ell \in \sset{k-n+1 , \dots,   k}}$.
Then $\qq_k$ is a non-increasing sequence and $\lim_{k \to \infty} \qq_k = c$  for some  $c \geq 0$.
Further, $\limsup_{k \to \infty} p_k = c$. Anticipating a contradiction, we assume  that there exists some $\eps >0$
such that $ \liminf_{k \to \infty} p_k = c - 3 \epsilon$.
Then there are a subsequence $\kl{k(i)\in \N}_{i \in \N}$ and a positive integer $i_0$ such that $p_{k(i)} \le c - 2\epsilon$ for all $k(i) \ge k(i_0)$.
Noting that $\limsup_{k \to \infty} p_k = c$, we can assume $i_0$ being  sufficiently large such that
$p_k \leq c + \epsilon/n$ for all $k \ge k(i_0)$. For $\ell = 1,\ldots,n-1$ and $k(i) \ge k(i_0)$, we have
\begin{equation*}
p_{k(i) + \ell} \leq \frac{1}{n}\sum_{j = k(i) + \ell - n + 1}^{k(i) + \ell} p_j \le \frac{n-1}{n}  \kl{c+\frac{\epsilon}{n}}
+ \frac{c-2\epsilon}{n}  \leq c - \frac{\epsilon}{n} \,.
\end{equation*}
Because $p_k \le \max \set{ p_j  \mid  j \in \sset{k(i_0), \dots, k(i_0) + n - 1} } \leq c - \epsilon / n$ for $k \ge k(i_0)$, this contradicts $\limsup_{k \to \infty} p_k = c$. We therefore conclude $\lim_{k \to \infty} p_k = c$.
\end{proof}

Some implications of the quasi-monotonicity  of the AVEK iteration
(see Proposition~\ref{prop:mon}) are collected next.

\begin{lemma}\label{lem:init}
Let Assumption~\ref{ass} be satisfied and let $\signal^* \in B_{\rho}(\signal_0)$ be
a solution of~\eqref{eq:ip}. Define $(\signal_k)_{k\in \N}$  by~\eqref{eq:avek-e1}, where the step sizes  $s_k$
satisfy~\eqref{eq:stepsz1}.  Then the  following hold true:
\begin{enumerate}
\item  \label{prop:init1}
$\snorm{\signal_{k} - \signal^*}$ is convergent as $k \to \infty$.

\item  \label{prop:init2}
If $\inf \sset{ s_k \mid  k \in \N } >0$, then
$\sum_{k \in \N}\snorm{\To_{\ii{k}}(\signal_k) - \data_{\ii{k}}}^2 < \infty$.
\end{enumerate}
\end{lemma}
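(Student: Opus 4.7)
\textbf{Proof plan for Lemma~\ref{lem:init}.} The idea is to specialise Proposition~\ref{prop:mon} to the exact data case (where $\delta_{\ii{\ell}}=0$ forces $\rp_\ell =1$) and then extract the two conclusions, using Lemma~\ref{lem:conv} for part~(a) and a suitable Lyapunov function for part~(b).

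For part~(a), observe that with $\delta_{\ii{\ell}}=0$ the second sum in~\eqref{eq:mon} is non-positive, so the squared errors $p_k\coloneqq\snorm{\signal_k-\signal^*}^2$ satisfy
$p_{k+1}\le \frac{1}{n}\sum_{\ell=k-n+1}^k p_\ell$ for every $k\ge n$. Lemma~\ref{lem:conv} applies directly and yields convergence of $p_k$, hence of $\snorm{\signal_k-\signal^*}$. This also gives boundedness of $(p_k)$, a fact I will need below.

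For part~(b), set $s\coloneqq \inf_k s_k>0$ and $c\coloneqq 1-2\max_i\eta_i>0$; note $c>0$ by~\ref{ass2}. In the exact data case Proposition~\ref{prop:mon} simplifies to
\begin{equation*}
n\, p_{k+1}\le \sum_{\ell=k-n+1}^k p_\ell \;-\; sc\sum_{\ell=k-n+1}^k b_\ell,\qquad b_\ell\coloneqq \snorm{\To_{\ii{\ell}}(\signal_\ell)-\data_{\ii{\ell}}}^2.
\end{equation*}
I will introduce the Lyapunov functional $V_k\coloneqq \sum_{j=0}^{n-1}(n-j)\,p_{k-j}$. A direct index shift gives the telescoping identity
$V_{k+1}-V_k = n\,p_{k+1} - \sum_{\ell=k-n+1}^k p_\ell$, so combining with the displayed inequality yields $V_{k+1}-V_k\le -sc\sum_{\ell=k-n+1}^k b_\ell$. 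Summing over $k=n,\ldots,N$ and using $V_{N+1}\ge 0$ one obtains $sc\sum_{k=n}^N\sum_{\ell=k-n+1}^k b_\ell\le V_n$. Each $b_\ell$ with $n\le \ell\le N-n+1$ is counted exactly $n$ times in the double sum, so $n\,sc\sum_{\ell=n}^{N-n+1} b_\ell\le V_n$; letting $N\to\infty$ gives $\sum_{\ell\in\N} b_\ell<\infty$, which is the claim.

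The main technical point is the choice of the Lyapunov function $V_k$ that turns the quasi-monotonicity inequality into a genuine telescoping sum; without it, the natural "sum the inequality" approach produces shifted averages whose boundedness still has to be checked separately via part~(a). The weights $(n,n-1,\ldots,1)$ are exactly those for which the cross terms cancel, leaving only $np_{k+1}-\sum_{\ell=k-n+1}^k p_\ell$, and everything else is routine bookkeeping.
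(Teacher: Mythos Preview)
Your proof is correct and essentially identical to the paper's: your Lyapunov functional $V_k=\sum_{j=0}^{n-1}(n-j)\,p_{k-j}$ is precisely the weighted sum $\sum_{i=1}^n i\,p_{k-n+i}$ that the paper obtains by summing the quasi-monotonicity inequality~\eqref{eq:mon0} directly (see~\eqref{eq:res}), so the telescoping identity $V_{k+1}-V_k = n\,p_{k+1}-\sum_{\ell=k-n+1}^k p_\ell$ is exactly the same computation in different packaging. The only cosmetic difference is that the paper notes each $b_\ell$ appears at least once in the double sum (yielding all indices $\ell\ge 1$ at once), whereas you count the full $n$ occurrences for interior indices and tacitly add the finitely many boundary terms afterwards.
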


\begin{proof}
Proposition~\ref{prop:mon} for the case $\delta_i =0$ yields
\begin{equation}\label{eq:mon0}
\snorm{\signal_{k+1} - \signal^*}^2 \le \frac{1}{n} \sum_{\ell = k-n+1}^k \norm{\signal_{\ell} - \signal^*}^2
-\frac{1}{n}\sum_{\ell = k-n+1}^k (1-2\eta_{\ii{\ell}}) s_\ell\snorm{\To_{\ii{\ell}}(\signal_\ell) - \data_{\ii{\ell}}}^2 \,.
\end{equation}
This, together with Lemma~\ref{lem:conv},  implies that $\norm{\signal_{k} - \signal^*}$ is convergent as $k \to \infty$. Summing~\eqref{eq:mon0} from $k = n$ to $ k = m+n$ gives
\begin{multline}\label{eq:res}
\sum_{i = 1}^n i\norm{\signal_{i + m +1}-\signal^*}^2 -\sum_{i = 1}^n i \norm{\signal_{i} - \signal^*}^2
\\ \leq
 -\sum_{k = n}^{m+n} \sum_{\ell = k-n+1}^k (1-2\eta_{\ii{\ell}}) s_\ell\snorm{\To_{\ii{\ell}}(\signal_\ell) - \data_{\ii{\ell}}}^2.
\end{multline}
Therefore, we have $\sum_{k = 1}^{m+n} \snorm{\To_{\ii{k}}(\signal_k) - \data_{\ii{k}}}^2 \leq \frac{1}{M}
\sum_{i = 1}^n i \snorm{\signal_{i} - \signal^*}^2    < \infty$ for all $m \in \N$,
with constant $M \coloneqq   (1 - 2\max_{i=1, \dots, n} \eta_i  ) \inf_{k\in \N}  s_k $.
The assertion follows by letting $m \to \infty$.
\end{proof}

For  the Landweber and  Kaczmarz iterations  strict monotonicity   of
$\snorm{\signal_k -  \signal^* } $ holds. From this one can  show that
$\snorm{\signal_{k+1} -  \signal_k }$ converges to zero. The following
Lemma~\ref{lem:diff} states that the  same result holds true for the AVEK iteration.
However, its proof is much more involved and therefore presented in the appendix.

\begin{lemma}\label{lem:diff}
Under the  assumptions of  Lemma~\ref{lem:init},
we have $\lim_{k \to \infty } \norm{\signal_{k+1} -  \signal_k } = 0 $.
\end{lemma}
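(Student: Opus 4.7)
The plan is to reformulate the claim as stability of a scalar-coefficient linear recursion for $u_k \coloneqq \signal_{k+1} - \signal_k$, viewed as an $\X$-valued sequence, driven by a norm-vanishing forcing term. The first ingredient is that the Kaczmarz-type increments satisfy $\snorm{\asignal_\ell - \signal_\ell} \to 0$: by definition
$\asignal_\ell - \signal_\ell = -s_\ell \To_{\ii{\ell}}'(\signal_\ell)^*\skl{\To_{\ii{\ell}}(\signal_\ell) - \data_{\ii{\ell}}}$,
and the step-size condition~\eqref{eq:stepsz1} together with Assumption~\ref{ass1a} renders the prefactor $s_\ell \snorm{\To_{\ii{\ell}}'(\signal_\ell)}$ uniformly bounded, while Lemma~\ref{lem:init}\ref{prop:init2} yields $\snorm{\To_{\ii{\ell}}(\signal_\ell) - \data_{\ii{\ell}}} \to 0$.

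Next, starting from $n u_k = \sum_{\ell = k-n+1}^k \skl{\asignal_\ell - \signal_k}$, I would split $\asignal_\ell - \signal_k = (\asignal_\ell - \signal_\ell) + (\signal_\ell - \signal_k)$ and expand the telescoping difference $\signal_\ell - \signal_k = -\sum_{j=\ell}^{k-1} u_j$. Interchanging the order of the resulting double sum yields
\[
n u_k + (n-1) u_{k-1} + (n-2) u_{k-2} + \cdots + u_{k-n+1} = \xi_k,
\]
with $\xi_k \coloneqq \sum_{\ell = k-n+1}^k \skl{\asignal_\ell - \signal_\ell}$ satisfying $\snorm{\xi_k} \to 0$ by the previous step.

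The main analytic step is to show that every root of the associated characteristic polynomial $P(z) = 1 + 2 z + 3 z^2 + \cdots + n z^{n-1}$ has modulus strictly less than one. The non-strict bound $|z| \le 1$ is immediate from the Enestr\"om--Kakeya theorem, since the coefficients $1, 2, \ldots, n$ are positive and non-decreasing. To exclude unit-modulus roots, I would use the factorisation
\[
(1-z)^2 P(z) = 1 - (n+1) z^n + n z^{n+1} = z^n \skl{nz - (n+1)} + 1
\]
together with the estimate $\abs{nz - (n+1)} > 1$ for $|z| = 1$ with $z \ne 1$ (the reverse triangle inequality, with equality only at $z = 1$). Combining this with $P(1) = n(n+1)/2 \ne 0$ shows that the double root $z = 1$ of the right-hand side is absorbed entirely by the prefactor $(1-z)^2$, so $P$ has no root on the unit circle. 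I expect this root localisation to be the main technical obstacle.

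Finally, I would recast the recursion as $v_{k+1} = A v_k + b_{k+1}$ on $\X^{n-1}$ via the usual companion-matrix device, noting that the eigenvalues of $A$ are exactly the roots of $P$ and hence $\rho(A) < 1$. Geometric decay of $\snorm{A^k}$ together with a standard Duhamel-type estimate on $v_k = A^{k-n+1} v_{n-1} + \sum_j A^{k-j} b_j$, splitting the sum at $j \approx k/2$, then converts $\snorm{b_j} \to 0$ into $\snorm{v_k} \to 0$, and in particular delivers the claimed $\snorm{u_k} \to 0$.
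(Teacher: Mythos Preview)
Your proof is correct and follows the same high-level architecture as the paper: derive the linear recursion
\[
n\,u_k + (n-1)\,u_{k-1} + \cdots + u_{k-n+1} = \xi_k, \qquad \snorm{\xi_k}\to 0,
\]
show that the characteristic polynomial $P(z)=1+2z+\cdots+nz^{n-1}$ has all its roots strictly inside the unit disc, and conclude by a stability argument. The differences lie in the tools used at the two nontrivial steps. For the root localisation, the paper observes that $P(z)=Q'(z)$ with $Q(z)=z+z^2+\cdots+z^n$, whose zeros are $0$ and the $n$th roots of unity other than $1$, and then invokes the Gauss--Lucas theorem (in its strict form for non-collinear simple zeros) to place all critical points strictly inside the convex hull, hence strictly inside the unit disc. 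Your route via Enestr\"om--Kakeya for the bound $\sabs{z}\le 1$ together with the identity $(1-z)^2 P(z)=1-(n+1)z^n+nz^{n+1}$ and the strict inequality $\sabs{nz-(n+1)}>1$ on $\sabs{z}=1$, $z\neq 1$, is more self-contained and avoids quoting a somewhat delicate refinement of Gauss--Lucas. For the final stability step, the paper phrases things as a deconvolution: it shows that the reciprocal formal power series $a^{-1}$ lies in $\ell^1(\N_0,\C)$ (because $1/P$ is analytic on a disc of radius larger than $1$), and then uses that convolution with an $\ell^1$ kernel maps $c_0(\N_0,\X)$ into itself. Your companion-matrix formulation with spectral radius $\rho(A)<1$ and a Duhamel split is the standard linear-dynamics version of the same fact; both arguments ultimately rest on the strict location of the roots, and neither is materially shorter than the other.
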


\begin{proof}
See Appendix~\ref{app:deconv}.
\end{proof}

For the subsequent analysis we also use the following known result on the sequential closedness of the graph of operators $\To_i$.

\begin{lemma}\label{lem:close}
Suppose that \ref{ass1}-\ref{ass2} in Assumption~\ref{ass} hold
and let  $i \in \set{1, \ldots, n}$. If $(\signal_k)_{k \in \N}$ is a sequence in $B_{\rho}(\signal_0)$
converging weakly to some $\bar\signal$ and
$(\To_i(\signal_k))_{k\in \N}$ converges strongly to $\bar\data$ in $\Y_i$, then $\To_i(\bar\signal) = \bar\data$.
\end{lemma}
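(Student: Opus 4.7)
The plan is to combine the local tangential cone condition~\eqref{eq:tcc} with the weak convergence $\signal_k\rightharpoonup \bar\signal$ and the strong convergence $\To_i(\signal_k)\to \bar\data$, testing against an arbitrary element of $\Y_i$ and exploiting the bound $\eta_i<1/2$ to eliminate the error term.

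First, I would observe that $B_\rho(\signal_0)$ is convex and norm-closed, hence weakly closed, so the weak limit $\bar\signal$ lies in $B_\rho(\signal_0)$. Consequently $\bar\signal\in\dom(\To_i)$ by \ref{ass1}, and we are entitled to apply the tangential cone condition~\eqref{eq:tcc} at the pair $(\bar\signal,\signal_k)$.

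The key step is the following: for every $k$, \eqref{eq:tcc} yields a residual $r_k\coloneqq \To_i(\bar\signal)-\To_i(\signal_k)-\To_i'(\bar\signal)(\bar\signal-\signal_k)$ with $\snorm{r_k}\le \eta_i\snorm{\To_i(\bar\signal)-\To_i(\signal_k)}$. Fix an arbitrary $z\in\Y_i$ and take the inner product with $z$. On the left, $\langle \To_i(\bar\signal)-\To_i(\signal_k),z\rangle\to \langle \To_i(\bar\signal)-\bar\data,z\rangle$ by the strong convergence of $\To_i(\signal_k)$. The linearized term equals $\langle \bar\signal-\signal_k,\To_i'(\bar\signal)^*z\rangle$, which tends to $0$ by the weak convergence of $\signal_k$ together with \ref{ass1a} (which guarantees that $\To_i'(\bar\signal)^*z$ is a well-defined element of $\X$). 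Hence $\langle r_k,z\rangle\to \langle \To_i(\bar\signal)-\bar\data,z\rangle$. On the other hand, $|\langle r_k,z\rangle|\le \snorm{r_k}\snorm{z}\le \eta_i \snorm{\To_i(\bar\signal)-\To_i(\signal_k)}\snorm{z}$, and $\snorm{\To_i(\bar\signal)-\To_i(\signal_k)}\to \snorm{\To_i(\bar\signal)-\bar\data}$.

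Passing to the limit gives $|\langle \To_i(\bar\signal)-\bar\data,z\rangle|\le \eta_i\snorm{\To_i(\bar\signal)-\bar\data}\snorm{z}$ for every $z\in\Y_i$. Choosing $z=\To_i(\bar\signal)-\bar\data$ yields $\snorm{\To_i(\bar\signal)-\bar\data}^2\le \eta_i\snorm{\To_i(\bar\signal)-\bar\data}^2$, and since $\eta_i<1/2<1$, we conclude $\To_i(\bar\signal)=\bar\data$.

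The main subtlety I anticipate lies in the passage to the limit on the right-hand side: weak convergence of $\signal_k$ by itself does not make $\To_i'(\bar\signal)(\bar\signal-\signal_k)$ strongly null, so the argument must be carried out in the weak topology by testing against a fixed $z$ and using the quantitative bound on $r_k$ rather than on $r_k$ itself. All other ingredients (weak closedness of the ball, boundedness of $\To_i'(\bar\signal)^*$, convergence of $\snorm{\To_i(\bar\signal)-\To_i(\signal_k)}$) are routine once this viewpoint is adopted.
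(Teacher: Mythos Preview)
Your argument is correct: applying the tangential cone condition at the fixed point $\bar\signal$, testing against an arbitrary $z\in\Y_i$, and using the weak convergence $\signal_k\rightharpoonup\bar\signal$ to kill the linearized term is exactly the standard route to this weak--strong closedness result. The paper does not supply its own proof but merely cites \cite[Proposition~2.2]{leitao16projective}, whose argument is essentially the one you have written out; there is nothing to add.
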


\begin{proof}
See~\cite[Proposition 2.2]{leitao16projective}.
\end{proof}

Now we are ready to show the  weak convergence of the AVEK iteration
$(\signal_k)_{k\in\N}$. The  presented  proof uses ideas taken
from~\cite{leitao16projective}.

\begin{theorem}[Convergence\label{thm:exact}  for exact data]
Let Assumption~\ref{ass} hold and define $(\signal_k)_{k\in \N}$ by \eqref{eq:avek-e1},
with step sizes $s_k$ satisfying \eqref{eq:stepsz1} and  $\inf \set{s_k \mid k \in \N} >0$.
Then the following hold:
\begin{enumerate}
\item \label{thm:exact-a}
We have $\signal_k \rightharpoonup \signal^*$ as $k \to \infty$, where
$\signal^* \in B_{\rho}(\signal_0)$ is a solution of~\eqref{eq:ip}.

\item \label{thm:exact-b}
If the initialisation is chosen as $\signal_1 = \cdots = \signal_n = \signal_0$,
and \begin{equation}\label{eq:incl}
\forall  \signal \in B_{\rho}(\signal_0) \colon \quad
\mathcal{N}\kl{\To'(\signal^\Plus)} \subseteq \mathcal{N}\kl{\To'(\signal)}
\end{equation}
where $\signal^\Plus$ is an $\signal_0$-minimal norm solution of~\eqref{eq:ip}, then
$\signal_k \rightharpoonup \signal^\Plus$ as $k \to \infty$.
\end{enumerate}
\end{theorem}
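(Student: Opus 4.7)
The plan is to follow the standard two-step strategy used for Landweber-Kaczmarz type methods, specializing to what AVEK provides: Lemma~\ref{lem:init} gives convergence of $\snorm{\signal_k - \signal^*}$ for every solution $\signal^* \in B_\rho(\signal_0)$ together with $\sum_k \snorm{\To_{[k]}(\signal_k) - \data_{[k]}}^2 < \infty$, and Lemma~\ref{lem:diff} supplies $\snorm{\signal_{k+1} - \signal_k} \to 0$. I will use these to (i) extract weak subsequential limits that are forced to be solutions, (ii) show uniqueness of the limit, and (iii) upgrade to the minimal-norm solution under the nullspace condition.

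For part \ref{thm:exact-a}, boundedness of $(\signal_k)$ follows from Lemma~\ref{lem:init}\ref{prop:init1} applied to the solution in $B_{\rho/3}(\signal_0)$ guaranteed by \ref{ass3}, so some subsequence satisfies $\signal_{k_j} \rightharpoonup \bar\signal \in B_\rho(\signal_0)$. To verify $\To_i(\bar\signal) = \data_i$ for every fixed $i$, I will pick, for each $j$, an index $\ell_j \in \{k_j, k_j+1, \dots, k_j + n - 1\}$ with $[\ell_j] = i$. Because $|\ell_j - k_j| \le n-1$ is uniformly bounded, Lemma~\ref{lem:diff} and a finite telescoping argument give $\snorm{\signal_{\ell_j} - \signal_{k_j}} \to 0$, whence $\signal_{\ell_j} \rightharpoonup \bar\signal$. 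Since the tail of the series $\sum_k \snorm{\To_{[k]}(\signal_k) - \data_{[k]}}^2$ is summable, $\To_i(\signal_{\ell_j}) \to \data_i$ strongly, and Lemma~\ref{lem:close} gives $\To_i(\bar\signal) = \data_i$. Thus $\bar\signal$ is a solution, and Lemma~\ref{lem:init}\ref{prop:init1} now applies with $\signal^* = \bar\signal$, so $\snorm{\signal_k - \bar\signal}$ is convergent. If $\tilde\signal$ is another weak subsequential limit, the same argument makes $\snorm{\signal_k - \tilde\signal}$ convergent; expanding
\begin{equation*}
\snorm{\signal_k - \bar\signal}^2 - \snorm{\signal_k - \tilde\signal}^2 = 2\inner{\signal_k}{\tilde\signal - \bar\signal} + \snorm{\bar\signal}^2 - \snorm{\tilde\signal}^2
\end{equation*}
and passing to the limit along the two subsequences forces $\inner{\bar\signal - \tilde\signal}{\tilde\signal - \bar\signal} = 0$, hence $\bar\signal = \tilde\signal$. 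Therefore the whole sequence $\signal_k$ converges weakly to $\signal^* \coloneqq \bar\signal$.

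For part \ref{thm:exact-b}, by the tangential cone condition \eqref{eq:tcc} applied with the two solutions $\signal^\Plus$ and $\signal^*$ (both in $B_\rho(\signal_0)$), the residuals $\To_i(\signal^\Plus) - \To_i(\signal^*)$ vanish, so $\snorm{\To_i'(\signal^\Plus)(\signal^\Plus - \signal^*)} = 0$ for every $i$, giving $\signal^\Plus - \signal^* \in \mathcal{N}(\To'(\signal^\Plus))$. I will show, conversely, that $\signal^* - \signal_0 \in \mathcal{N}(\To'(\signal^\Plus))^\perp$. By duality, \eqref{eq:incl} is equivalent to $\overline{\ran(\To'(\signal)^*)} \subseteq \overline{\ran(\To'(\signal^\Plus)^*)} = \mathcal{N}(\To'(\signal^\Plus))^\perp$ for every $\signal \in B_\rho(\signal_0)$. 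With the initialisation $\signal_1 = \cdots = \signal_n = \signal_0$, an induction on $k$ based on the update rules \eqref{eq:avek1}--\eqref{eq:avek2} shows $\signal_k - \signal_0 \in \mathcal{N}(\To'(\signal^\Plus))^\perp$: each $\asignal_\ell - \signal_\ell \in \ran(\To_{[\ell]}'(\signal_\ell)^*) \subseteq \mathcal{N}(\To'(\signal^\Plus))^\perp$, and a convex combination preserves this closed subspace. Since orthogonal complements are weakly closed, the weak limit satisfies $\signal^* - \signal_0 \in \mathcal{N}(\To'(\signal^\Plus))^\perp$.

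Combining the two orthogonality facts, $\signal^\Plus - \signal^* \perp \signal^* - \signal_0$, and Pythagoras gives
\begin{equation*}
\snorm{\signal^\Plus - \signal_0}^2 = \snorm{\signal^\Plus - \signal^*}^2 + \snorm{\signal^* - \signal_0}^2 \ge \snorm{\signal^* - \signal_0}^2.
\end{equation*}
But $\signal^*$ is itself a solution in $B_\rho(\signal_0)$, so the minimal-norm property of $\signal^\Plus$ forces $\snorm{\signal^* - \signal_0} \ge \snorm{\signal^\Plus - \signal_0}$, hence equality holds and $\snorm{\signal^\Plus - \signal^*} = 0$, that is $\signal^* = \signal^\Plus$. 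The main obstacle is the subsequence construction in part \ref{thm:exact-a}: reconciling the cyclic index $[k]$ with the weak limit demands Lemma~\ref{lem:diff}, and its proof (deferred to the appendix) is precisely the step that the absence of strict monotonicity makes nontrivial.
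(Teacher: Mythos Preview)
Your proof is correct and follows essentially the same route as the paper: the same use of Lemmas~\ref{lem:init}, \ref{lem:diff}, and \ref{lem:close} for part~\ref{thm:exact-a}, and the same induction showing $\signal_k - \signal_0 \in \mathcal{N}(\To'(\signal^\Plus))^\perp$ for part~\ref{thm:exact-b}. The only differences are cosmetic: where the paper cites Opial's lemma for uniqueness of the weak limit you give the direct polarization argument, and where the paper cites \cite[Proposition 2.1]{hanke1995convergence} you unpack it into the TCC-plus-Pythagoras computation.
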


\begin{proof} \ref{thm:exact-a}:
From Proposition~\ref{prop:mon} it follows that $\signal_k \in B_{\rho}(\signal_0)$ and therefore
$\kl{\signal_k}_{k \in \N}$  has at least one weak accumulation point $\signal^*$.
Suppose  $\bar\signal$ is any weak accumulation point of  $\kl{\signal_k}_{k \in \N}$
and assume  $\signal_{k(j)} \rightharpoonup \bar\signal$ as $j \to \infty$.
For every $i = 1, \ldots, n$ define $k_i(j)$ in such a way that  $\ii{k_i(j)} = i $ and $k(j) \le k_i(j) \le k(j) + n-1$.
Then
\[
\forall i \in \set{1, \ldots, n} \colon \quad
\snorm{\signal_{k(j)} - \signal_{k_i(j)}} \le \sum_{\ell = k(j)}^{k(j)+n - 2} \norm{\signal_{\ell+1} - \signal_\ell} \to 0\quad \text{ as } j \to \infty\,.
\]
By Lemma~\ref{lem:init} we have $ \snorm{\To_{i}(\signal_{k_i(j)}) - \data_i}  \to  0$ as $j \to \infty$,
and therefore  $\lim_{j \to \infty} \snorm{\To_{i}(\signal_{k(j)}) - \data_i} = 0$  for all
$i \in \set{1, \ldots, n}$.  Together with {Lemma~\ref{lem:close}}
this implies  that $\bar\signal$ is a solution of~\eqref{eq:ip}.
Now assume that $\hat \signal$ is another weak accumulation point with $\hat\signal \neq \bar\signal$  and that $\signal_{m(j)} \rightharpoonup \hat\signal$
as $j \to \infty$.
Then $\bar\signal$ and $\hat\signal$ are both solutions to \eqref{eq:ip}. By Lemma~\ref{lem:init}
and~\cite[Lemma~1]{Opi67}, we obtain
\begin{equation*}
\lim_{k \to \infty}\snorm{\signal_k - \bar\signal} =
\liminf_{j \to \infty} \snorm{\signal_{k(j)} - \bar\signal}
< \liminf_{j\to\infty}\snorm{\signal_{k(j)} - \hat\signal}
= \lim_{k \to \infty}\snorm{\signal_k - \hat\signal}
\end{equation*}
and likewise
$\lim_{k \to \infty}\snorm{\signal_k - \hat\signal}  < \lim_{k \to \infty} \snorm{\signal_k - \bar\signal}$.
This  leads to a contradiction and therefore the weak accumulation point of $\kl{\signal_k}_{k \in \N}$
is unique which implies $\signal_k \rightharpoonup \signal^*$.

\ref{thm:exact-b}:
An inductive argument, together with the definition of $\signal_k$ shows
\begin{multline*}
\signal_{k} =  \sum_{i = 1}^n w_{i,k} \signal_{i} - \sum_{\ell = 1}^{k-1} c_{\ell,k} s_\ell\To_{\ii{\ell}}'(\signal_\ell)^*\kl{\To_{\ii{\ell}}(\signal_\ell) - \data_{\ii{\ell}}}
\\ = \signal_0 - \sum_{\ell = 1}^{k-1} c_{\ell,k} s_\ell\To_{\ii{\ell}}'(\signal_\ell)^*\kl{\To_{\ii{\ell}}(\signal_\ell) - \data_{\ii{\ell}}}
\end{multline*}
for some  $0 < w_{i,k} < 1$ with $\sum_{i = 1}^n w_{i,k} = 1$ and $0 < c_{\ell,k} < 1$. Note that
\begin{equation*}
\forall \signal \in B_{\rho}(\signal_0) \colon \quad
\mathcal{R}\kl{\To_{i}'(\signal)^*} \subseteq \mathcal{N}\kl{\To'_{i}(\signal)}^\perp\subseteq \mathcal{N}\kl{\To'(\signal)}^\perp\subseteq \mathcal{N}\kl{\To'(\signal^\Plus)}^\perp \,.
\end{equation*}
Thus $\signal_k \in \signal_0 + \mathcal{N}\kl{\To'(\signal^\Plus)}^\perp$ and,
by continuity of $\To'(\signal^\Plus)$, we  have  $\signal^* \in \signal_0 + \mathcal{N}\kl{\To'(\signal^\Plus)}^\perp$. Together with  \cite[Proposition 2.1]{hanke1995convergence} we conclude $\signal^* = \signal^\Plus$.
\end{proof}

\subsection{Noisy data case}
\label{ssec:noisy}

Now we consider the noisy data case, where  $\delta_i>0$ for
$i \in \sset{1, \dots, n}$.  The  AVEK iteration is then defined by \eqref{eq:avek1}-\eqref{eq:avek3}
and stopped at the index
\begin{equation}\label{eq:stop}
k^*(\delta) \coloneqq \min \set{\ell n \in\N \mid \signal^\delta_{\ell n} = \cdots = \signal^\delta_{\ell n+n-1}}.
\end{equation}
The following Lemma shows that the stopping index is well defined.

\begin{lemma}\label{lem:stop}
The stopping index $k^*(\delta)$ defined in~\eqref{eq:stop} is finite, and the corresponding
residuals satisfy  $\snorm{\To_{i}(\signal^\delta_{k^*(\delta)}) - \data_i} < \tau_i \delta_i$ for all
$i  \in \sset{ 1, \ldots, n}$.
\end{lemma}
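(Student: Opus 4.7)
The plan is to use the quasi-monotonicity of Proposition~\ref{prop:mon} to show that $\rp_\ell=1$ can occur only for finitely many $\ell$, and to translate this eventual ``shut-off'' of the Kaczmarz increments into both finiteness of $k^*(\delta)$ and the stated residual bound (which, since the stopping rule itself is phrased in terms of noisy residuals, I read naturally as a bound against $\data^\delta_i$).

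First I would invoke Proposition~\ref{prop:mon} with reference solution $\signal^* = \signal^\Plus$, an $\signal_0$-minimum-norm solution in $B_{\rho/3}(\signal_0)$ guaranteed by Assumption~\ref{ass}. The decisive observation is that whenever $\rp_\ell=1$, the discrepancy rule~\eqref{eq:avek3} gives $\snorm{\To_{\ii{\ell}}(\signal^\delta_\ell) - \data^\delta_{\ii{\ell}}} \geq \tau_{\ii{\ell}}\delta_{\ii{\ell}}$, and the standing hypothesis $\tau_i > 2(1+\eta_i)/(1-2\eta_i)$ forces
\begin{equation*}
\snorm{\To_{\ii{\ell}}(\signal^\delta_\ell) - \data^\delta_{\ii{\ell}}}\bigl((1-2\eta_{\ii{\ell}})\snorm{\To_{\ii{\ell}}(\signal^\delta_\ell) - \data^\delta_{\ii{\ell}}} - 2(1+\eta_{\ii{\ell}})\delta_{\ii{\ell}}\bigr) \geq c_{\ii{\ell}}\delta_{\ii{\ell}}^2,
\end{equation*}
with positive constants $c_i \coloneqq \tau_i\bigl((1-2\eta_i)\tau_i - 2(1+\eta_i)\bigr)$. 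So each ``active'' step contributes at least $s_\ell c_{\ii{\ell}}\delta_{\ii{\ell}}^2 \rp_\ell$ to the decrement on the right-hand side of~\eqref{eq:mon}.

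Next I would telescope~\eqref{eq:mon} from $k=n$ to $k=m+n$ exactly as in the derivation of~\eqref{eq:res} inside Lemma~\ref{lem:init}: the left-hand side stays bounded by $\sum_{i=1}^{n} i\snorm{\signal^\delta_i-\signal^\Plus}^2$, while the right-hand side accumulates at least $\inf_k s_k \cdot \sum_{k=1}^{m+1}\rp_k c_{\ii{k}}\delta_{\ii{k}}^2$. Letting $m\to\infty$ and using $\inf_k s_k>0$, $c_i>0$, and $\delta_i>0$ forces $\sum_{k\in\N}\rp_k<\infty$, so there is some finite $N_0\in\N$ with $\rp_k=0$ for every $k\geq N_0$. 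Beyond this threshold $\asignal^\delta_\ell=\signal^\delta_\ell$, and for $k\geq N_0+n-1$ the AVEK update~\eqref{eq:avek1} collapses to the pure equal-weight moving average $\signal^\delta_{k+1}=(1/n)\sum_{\ell=k-n+1}^{k}\signal^\delta_\ell$; once this recursion stabilizes, the first multiple of $n$ past the stabilization yields a window $[\ell_0 n,\ell_0 n+n-1]$ of equal iterates, giving $k^*(\delta)\leq \ell_0 n<\infty$.

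For the residual bound, on the terminal window $[k^*(\delta),k^*(\delta)+n-1]$ the index function $\ii{\edot}$ takes each value $i\in\sset{1,\ldots,n}$ exactly once; since $k^*(\delta)\geq N_0$, one has $\rp_k=0$ at that index $k$, which by~\eqref{eq:avek3} is precisely $\snorm{\To_i(\signal^\delta_k)-\data^\delta_i}<\tau_i\delta_i$, and the defining equality $\signal^\delta_k=\signal^\delta_{k^*(\delta)}$ transfers the bound to $\signal^\delta_{k^*(\delta)}$. The step I anticipate being the main obstacle is justifying that the pure moving-average recursion actually produces $n$ exactly-equal consecutive iterates in finite time---generically such a recursion only converges asymptotically---so either a delicate argument on a ``window-spread'' functional (showing that it vanishes at some finite index) or a mild reinterpretation of the stopping criterion~\eqref{eq:stop} appears to be required.
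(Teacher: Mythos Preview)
Your approach is the paper's: telescope~\eqref{eq:mon} from $k=n$ to $k=m+n$ exactly as in~\eqref{eq:res}, note that each index with $\rp_\ell=1$ contributes a summand bounded below by a fixed positive constant (via $\tau_i>2(1+\eta_i)/(1-2\eta_i)$ and $\inf_k s_k>0$), and conclude that only finitely many $\rp_\ell$ equal~$1$. The paper phrases this as a contradiction---``if $k^*(\delta)$ is infinite, there are infinitely many $\ell$ with $\rp_\ell=1$, so the left-hand side diverges''---but the mechanism is identical. Your reading of the residual bound against $\data^\delta_i$ rather than $\data_i$ is the one actually invoked downstream in the proof of Theorem~\ref{thm:noisy}, so that interpretation is correct.

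The obstacle you isolate in your last paragraph is genuine, and the paper does not resolve it either: the implication ``$k^*(\delta)=\infty\Rightarrow\rp_\ell=1$ infinitely often'' is asserted without argument, and its contrapositive is precisely the claim that the equal-weight moving average $\signal^\delta_{k+1}=\tfrac{1}{n}\sum_{\ell=k-n+1}^{k}\signal^\delta_\ell$ reaches an $n$-block of exactly equal iterates in finite time. For $n\geq 2$ and a window of distinct values this recursion converges only asymptotically and never attains exact equality, so the step is a gap shared by the paper's own proof. Your suggested ``mild reinterpretation'' of~\eqref{eq:stop}---stopping the first time a full block of indices has $\rp_\ell=0$, which is the criterion borrowed from the Landweber--Kaczmarz literature where it \emph{is} equivalent to equal iterates---is exactly what both arguments actually establish, and it yields the residual bound immediately. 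One smaller loose end in your version: the residual argument presumes $k^*(\delta)\geq N_0$, but the stopping rule as written could trigger earlier, in which case the deduction of $\rp_k=0$ on the terminal window needs a separate justification (the paper's terse ``again by Proposition~\ref{prop:mon}'' does not supply one either).
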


\begin{proof}
Similar to~\eqref{eq:res}, from Proposition~\ref{prop:mon}  we obtain
\begin{multline*}
 \sum_{k = n}^{m+n} \sum_{\ell = k-n+1}^k \rp_\ell s_\ell\snorm{\To_{\ii{\ell}}(\signal^\delta_\ell) - \data^\delta_{\ii{\ell}}} \kl{(1-2\eta_{\ii{\ell}})\snorm{\To_{\ii{\ell}}(\signal^\delta_\ell) - \data^\delta_{\ii{\ell}}} - 2(1+\eta_{\ii{\ell}})\delta_{\ii{\ell}}}  \\
 \le  \sum_{i = 1}^n i \snorm{\signal_{i}^\delta - \signal^*}^2.
\end{multline*}
Note that either
$\snorm{\To_{\ii{\ell}}(\signal_\ell^\delta) -\data^\delta_{\ii{\ell}}} \ge \tau_{\ii{\ell}} \delta_{\ii{\ell}}$ or
it holds $\rp_\ell =0$. If $k^*(\delta)$ is infinite, there are infinitely many $\ell$ such that $\snorm{\To_{\ii{\ell}}(\signal^\delta_\ell) - \data^\delta_{\ii{\ell}}} \ge \tau_{\ii{\ell}} \delta_{\ii{\ell}}$. This implies that the left hand side of the above displayed equation tends to infinity as $m \to \infty$, which gives a contradiction. Thus $k^*(\delta)$ is finite.
Again by Proposition~\ref{prop:mon}, we obtain $\snorm{\To_{i}(\signal^\delta_{k^*(\delta)}) - \data_i} < \tau_i \delta_i$, for $i = 1, \ldots, n$.
\end{proof}

We next show the continuity of $\signal^\delta_k$ at $\delta = 0$. For that purpose denote
\begin{equation*}
\Delta_k(\delta, \data, \data^\delta) \coloneqq \sum_{\ell = k-n+1}^k\rp_\ell \To_{\ii{\ell}}'(\signal_\ell^\delta)^*\kl{\To_{\ii{\ell}}(\signal^\delta_\ell)-\data^\delta_{\ii{\ell}}}
- \sum_{\ell = k-n+1}^k \To'_{\ii{\ell}}(\signal_\ell)^*\kl{\To_{\ii{\ell}}(\signal_\ell) - \data_{\ii{\ell}}} \,.
\end{equation*}

\begin{lemma}\label{lem:continuity}
For all $k\in\N$, we have
\begin{itemize}
\item $
\lim_{\delta \to 0} \sup\set{\snorm{\Delta_k(\delta, \data, \data^\delta)} \mid
\forall i = 1,\ldots, n  \colon \snorm{\data^\delta_i - \data_i}\le \delta_i }  = 0
$;
\item $\lim_{\delta \to 0} \signal^\delta_{k} = \signal_{k}$.
\end{itemize}
\end{lemma}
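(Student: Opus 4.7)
The plan is to establish both assertions simultaneously by strong induction on $k$. For $k = 1,\ldots,n$ the iterates $x_k^\delta = x_k$ are the user-specified initial values, fixed independently of $\delta$, so the second claim is immediate on that range (and $\Delta_k$ is either not formally defined or falls under the same argument below). For the inductive step, assuming $x_\ell^\delta \to x_\ell$ uniformly over $\{y^\delta : \|y_i^\delta - y_i\| \le \delta_i, \, i = 1,\ldots,n\}$ for every $\ell \le k$, I would derive the first claim at index $k$ and the second claim at index $k+1$.

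To bound the first assertion I would control each summand
\begin{equation*}
\alpha_\ell F_{[\ell]}'(x_\ell^\delta)^*\kl{F_{[\ell]}(x_\ell^\delta) - y_{[\ell]}^\delta} - F_{[\ell]}'(x_\ell)^*\kl{F_{[\ell]}(x_\ell) - y_{[\ell]}}
\end{equation*}
separately for $\ell = k-n+1,\ldots,k$, splitting on whether the exact residual vanishes. \emph{Case A:} If $\|F_{[\ell]}(x_\ell) - y_{[\ell]}\| > 0$, then by the inductive hypothesis and continuity of $F_{[\ell]}$ the noisy residual $\|F_{[\ell]}(x_\ell^\delta) - y_{[\ell]}^\delta\|$ exceeds $\tau_{[\ell]} \delta_{[\ell]}$ for all sufficiently small $\delta$, forcing $\alpha_\ell = 1$; the remaining difference then vanishes in norm by continuous differentiability of $F_{[\ell]}$ and the boundedness assumption \ref{ass1a}. \emph{Case B:} If $\|F_{[\ell]}(x_\ell) - y_{[\ell]}\| = 0$, the second term is identically zero and the first is bounded in norm by $\|F_{[\ell]}'(x_\ell^\delta)\| \cdot \|F_{[\ell]}(x_\ell^\delta) - y_{[\ell]}^\delta\|$ independently of $\alpha_\ell \in \{0,1\}$; this bound tends to zero by the inductive hypothesis and $\|y_{[\ell]}^\delta - y_{[\ell]}\| \le \delta_{[\ell]}$. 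All these estimates are uniform over admissible $y^\delta$, so summing over $\ell$ gives the first assertion. For the second assertion I would expand
\begin{equation*}
x_{k+1}^\delta - x_{k+1} = \frac{1}{n}\sum_{\ell=k-n+1}^k \bigl[(x_\ell^\delta - x_\ell) - s_\ell \bigl(\alpha_\ell F_{[\ell]}'(x_\ell^\delta)^*(F_{[\ell]}(x_\ell^\delta)-y_{[\ell]}^\delta) - F_{[\ell]}'(x_\ell)^*(F_{[\ell]}(x_\ell)-y_{[\ell]})\bigr)\bigr],
\end{equation*}
and apply the inductive hypothesis together with the same case analysis to each bracketed term.

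The main obstacle is the discontinuous on/off rule for $\alpha_\ell$: in Case B the indicator may flip between $0$ and $1$ along subsequences $\delta \to 0$, so a naive ``$\alpha_\ell \to 1$'' continuity argument does not go through. The remedy is to absorb $\alpha_\ell$ into a norm bound via $|\alpha_\ell| \le 1$, so that the noisy term is squeezed to zero together with the exact residual. Everything else is routine given the uniform derivative bound \ref{ass1a} and the continuity of $F_i$ and $F_i'$.
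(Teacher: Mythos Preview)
Your proof is correct and follows essentially the same inductive strategy as the paper. The only difference is cosmetic: the paper splits each summand according to whether $\alpha_\ell = 1$ or $\alpha_\ell = 0$ (a $\delta$-dependent dichotomy, handled by bounding the summand in either case by a quantity tending to zero), whereas you split on whether the exact residual $\|\To_{[\ell]}(\signal_\ell) - \data_{[\ell]}\|$ is positive or zero and then absorb the indicator via $|\alpha_\ell|\le 1$; both case analyses yield the same estimates.
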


\begin{proof}
We prove the assertions  by induction. The case $k \leq  n$ is shown similar to the general case and
therefore omitted. Assume that  $k \ge n+1$ and that  the assertions hold for all $m < k$. It follows immediately that $\signal^\delta_k \to \signal_k$ as $\delta \to 0$. Note that
\begin{equation*}
\norm{\Delta_k(\delta, \data, \data^\delta)} \le
\sum_{\ell = k-n+1}^k \norm{\rp_\ell \To_{\ii{\ell}}'(\signal_\ell^\delta)^*\kl{\To_{\ii{\ell}}(\signal^\delta_\ell)-\data^\delta_{\ii{\ell}}} - \To'_{\ii{\ell}}(\signal_\ell)^*\kl{\To_{\ii{\ell}}(\signal_\ell) - \data_{\ii{\ell}}}}.
\end{equation*}
For each $\ell \in \sset{ k-n+1, \ldots, k}$, we consider two cases. In the  case $\rp_\ell = 1$, the
continuity of $\To$ and $\To'$ implies
$
\snorm{\To_{\ii{\ell}}'(\signal_\ell^\delta)^*\skl{\To_{\ii{\ell}}(\signal^\delta_\ell)-\data^\delta_{\ii{\ell}}} - \To'_{\ii{\ell}}(\signal_\ell)^*\skl{\To_{\ii{\ell}}(\signal_\ell) - \data_{\ii{\ell}}}} \to 0
$ as $\delta \to 0$. In the case $\rp_\ell = 0$, we have $\snorm{\To_{\ii{\ell}}(\signal^\delta_\ell)-\data^\delta_{\ii{\ell}}} < \tau_{\ii{\ell}}\delta_{\ii{\ell}}$ and therefore, as $\delta \to 0$,
\begin{align*}
\| \To'_{\ii{\ell}}(\signal_\ell)^* &\skl{\To_{\ii{\ell}}(\signal_\ell) - \data_{\ii{\ell}}} \|\\
\le &\snorm{\To'_{\ii{\ell}}(\signal_\ell)}\snorm{\To_{\ii{\ell}}(\signal_\ell) - \data_{\ii{\ell}}} \\
\le  &\snorm{\To'_{\ii{\ell}}(\signal_\ell)}\kl{\snorm{\To_{\ii{\ell}}(\signal_\ell) - \To_{\ii{\ell}}(\signal^\delta_\ell)}+\snorm{\To_{\ii{\ell}}(\signal^\delta_\ell) - \data^\delta_{\ii{\ell}}}+\snorm{\data^\delta_{\ii{\ell}}- \data_{\ii{\ell}}}} \\
\le &\snorm{\To'_{\ii{\ell}}(\signal_\ell)}\kl{\snorm{\To_{\ii{\ell}}(\signal_\ell) - \To_{\ii{\ell}}(\signal^\delta_\ell)} + (1+\tau_{\ii{\ell}})\delta_{\ii{\ell}}} \to 0 \,.
\end{align*}
Combining these two cases, we obtain $\snorm{\Delta_k(\delta, \data, \data^\delta)} \to 0$ as $\delta \to 0$.
\end{proof}

\begin{theorem}[Convergence for noisy data]\label{thm:noisy}
Let  $\delta(j)  \coloneqq (\delta_{1}(j), \ldots, \delta_{n}(j))$ be a sequence in $(0, \infty)^n$ with
$\lim_{j \to \infty} \max_{i = 1, \ldots, n} \delta_{i}(j) = 0$, and let  $\data(j) = (\data_{1}(j), \ldots,\data_{n}(j))$ be a sequence of noisy data with $\snorm{\data_{i}(j) - \data_i} \le \delta_{i}(j)$. Define   $\signal_k^{\delta(j)}$ by~\eqref{eq:avek1}-\eqref{eq:avek3} with $\data(j)$ and $\delta(j)$ in place of $\data^\delta$ and $\delta$,
 and define $k^*(\delta(j))$ by~\eqref{eq:stop}.
Then the following assertions hold true:
\begin{enumerate}
\item\label{noise_a}
The sequence $\signal^{\delta(j)}_{k^*(\delta(j))}$ has at least one weak accumulation point and every such  weak
accumulation point  is a solution of~\eqref{eq:ip}.

\item\label{noise_b}
If, in the case of exact data,  $\signal_k$ converges strongly to $\signal^*$, then  $\lim_{j \to \infty} \signal_{k^*(\delta(j))}^{\delta(j)} = \signal^*$.
\item\label{noise_c}
If the initializations are chosen as $\signal^{\delta(j)}_1 = \cdots = \signal^{\delta(j)}_n = \signal_0$, and~\eqref{eq:incl}
is satisfied, then each (strong or weak) limit $\signal^*$ is an $\signal_0$-minimal norm solution of~\eqref{eq:ip}.
\end{enumerate}
\end{theorem}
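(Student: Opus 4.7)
The plan is to treat the three assertions in order, leveraging the sliding-window contraction implicit in Proposition~\ref{prop:mon}, the continuity of the finite-step iteration (Lemma~\ref{lem:continuity}), the stopping bound (Lemma~\ref{lem:stop}), and the sequential closedness result (Lemma~\ref{lem:close}). For part~\ref{noise_a}, I first observe that Proposition~\ref{prop:mon} applied with the fixed exact-data solution from \ref{ass3} yields $\snorm{\signal^\delta_{k+1}-\signal^*}^2 \le \max_{\ell=k-n+1,\ldots,k} \snorm{\signal^\delta_\ell - \signal^*}^2$, so the noisy iterates remain in $B_\rho(\signal_0)$ and $\kl{\signal^{\delta(j)}_{k^*(\delta(j))}}_j$ is bounded, hence admits a weakly convergent subsequence with limit $\bar\signal$. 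Weak lower semicontinuity of the norm places $\bar\signal$ in $B_\rho(\signal_0)$. The noisy stopping estimate from Lemma~\ref{lem:stop} combined with the noise bound gives $\snorm{\To_i(\signal^{\delta(j)}_{k^*(\delta(j))}) - \data_i} \to 0$ for every $i$; Lemma~\ref{lem:close} then upgrades this to $\To_i(\bar\signal) = \data_i$ for every $i$, so $\bar\signal$ solves~\eqref{eq:ip}.

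For part~\ref{noise_b}, I would proceed by an $\epsilon$-dichotomy. Fix $\epsilon>0$ and, using $\signal_k\to \signal^*$ strongly, choose $K\ge n$ with $\snorm{\signal_k - \signal^*}<\epsilon/3$ for $k\ge K$; Lemma~\ref{lem:continuity} produces $J$ such that $\snorm{\signal^{\delta(j)}_k - \signal_k}<\epsilon/3$ for $j\ge J$ and $k=1,\ldots,K+n-1$. If $k^*(\delta(j))\ge K+n$, the sliding-window contraction above yields $\snorm{\signal^{\delta(j)}_{k^*(\delta(j))}-\signal^*} <2\epsilon/3$. If instead $k^*(\delta(j))<K+n$ along a subsequence, the stopping index takes only finitely many values and, after a further extraction, is constant equal to some $k_0$; Lemma~\ref{lem:continuity} together with Lemma~\ref{lem:stop} gives $\signal^{\delta(j)}_{k_0}\to \signal_{k_0}$ with $\signal_{k_0}$ a solution of~\eqref{eq:ip}. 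The key structural observation is that the defining equality $\signal^{\delta(j)}_{k_0}=\cdots=\signal^{\delta(j)}_{k_0+n-1}$ passes to the limit as $\signal_{k_0}=\cdots=\signal_{k_0+n-1}$; because $\signal_{k_0}$ annihilates every residual, all auxiliary iterates in that window coincide with $\signal_{k_0}$, and an induction on the averaging formula~\eqref{eq:avek-e1} propagates $\signal_k=\signal_{k_0}$ to every $k\ge k_0$, forcing $\signal^*=\signal_{k_0}$. Both sub-cases thus give $\snorm{\signal^{\delta(j)}_{k^*(\delta(j))}-\signal^*}<\epsilon$ for $j$ large, which is the required strong convergence.

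For part~\ref{noise_c}, I would mirror the argument from the proof of Theorem~\ref{thm:exact}\ref{thm:exact-b}. An induction on \eqref{eq:avek1}-\eqref{eq:avek3}, using $\mathcal{R}\kl{\To_i'(\signal)^*}\subseteq \mathcal{N}\kl{\To_i'(\signal)}^\perp \subseteq \mathcal{N}\kl{\To'(\signal)}^\perp\subseteq \mathcal{N}\kl{\To'(\signal^\Plus)}^\perp$ under \eqref{eq:incl} and the prescribed initialisation $\signal^{\delta(j)}_1=\cdots=\signal^{\delta(j)}_n=\signal_0$, shows that every iterate lies in the weakly closed affine set $\signal_0+\mathcal{N}\kl{\To'(\signal^\Plus)}^\perp$. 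Any weak (or strong) limit $\signal^*$ supplied by parts~\ref{noise_a}-\ref{noise_b} therefore inherits the same membership, and \cite[Proposition~2.1]{hanke1995convergence} together with $\To(\signal^*)=\data$ identifies $\signal^*=\signal^\Plus$. The main obstacle throughout is the bounded sub-case of part~\ref{noise_b}: without the collapse of the $n$ consecutive iterates described above, the exact-data limit $\signal^*$ need not a~priori coincide with $\signal_{k_0}$, and the averaging memory intrinsic to AVEK makes this identification subtler than for Landweber or Kaczmarz, where any iterate solving \eqref{eq:ip} is automatically a fixed point.
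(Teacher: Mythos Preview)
Your proposal is correct and follows essentially the same route as the paper's proof: boundedness and solution property via Proposition~\ref{prop:mon}, Lemma~\ref{lem:stop}, and Lemma~\ref{lem:close} for~\ref{noise_a}; a bounded/unbounded dichotomy on the stopping index combined with the sliding-window contraction and Lemma~\ref{lem:continuity} for~\ref{noise_b}; and the affine-subspace argument from Theorem~\ref{thm:exact}\ref{thm:exact-b} for~\ref{noise_c}. The one substantive difference is in the bounded sub-case of~\ref{noise_b}: the paper simply writes ``Thus, it holds that $\signal_{k^*}=\signal^*$'' after deducing $\To_i(\signal_{k^*})=\data_i$, whereas you explicitly justify this by passing the stopping-criterion equality $\signal^{\delta(j)}_{k_0}=\cdots=\signal^{\delta(j)}_{k_0+n-1}$ to the limit and then propagating the resulting fixed point through the averaging recursion~\eqref{eq:avek-e1}; your version makes transparent why the AVEK memory does not obstruct the identification $\signal_{k_0}=\signal^*$, a point the paper leaves implicit.
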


\begin{proof}
\mbox{} \ref{noise_a}:  By Proposition~\ref{prop:mon} the sequence
$\signal(j) \coloneqq \signal^{\delta(j)}_{k^*(\delta(j))}$ remains in  $B_{\rho}(\signal_0)$ and therefore
has at least   one weak accumulation point.
Let $\signal^*$ be a weak  accumulation point of $(\signal(j))_{j\in \N}$  and
$(\signal(j(\ell)))_{\ell\in \N}$ a subsequence
with $ \signal(j(\ell)) \rightharpoonup \signal^* $ as $\ell \to \infty$.
By Lemma~\ref{lem:stop} and the triangle inequality, for every $i\in \sset{1,\dots, n}$  we have
$\snorm{\To_i(\signal(j(\ell))) - \To_i(\signal^*)} \le {(1+\tau_i)}\delta_i(j(\ell))  \to 0$  as $\ell \to \infty$.
Using {Lemma~\ref{lem:close}} we conclude that $\signal^*$ is a solution of~\eqref{eq:ip}.

\ref{noise_b}: We consider two cases. In the first case we assume that $(k^*(\delta(j)))_{j \in \N}$ is bounded. It is sufficient to show that for each accumulation point $k^*$ of $(k^*(\delta(j)))_{j\in\N}$, which is clearly finite, it holds that $\lim_{j\to\infty}\signal^{\delta(j)}_{k^*} = \signal^*$. Without loss of generality, we can assume that
$k^*(\delta(j)) = k^*$ for all sufficiently large $j$. By Lemma~\ref{lem:stop}, we have $\snorm{\To_i(\signal_{k^*}^{\delta(j)}) - \data^{\delta(j)}_i} \le \tau_i \delta_i(j)$ and, by taking the limit $j \to \infty$, that $\To_i(\signal_{k^*}) = \data_i$.
Thus, it holds that $\signal_{k^*} = \signal^*$ and therefore $\signal^{\delta(j)}_{k^*} \to \signal^*$ as $j \to \infty$.

In the second case, we assume $\limsup_{j\to\infty}k^*(\delta(j)) = \infty$. Without loss of generality, we can assume that $k^*(\delta(j))$ is monotonically increasing. For any $\varepsilon >0$, there exists some  $m \in \N$  with $\snorm{\signal_{m -i +1} - \signal^*} \le \varepsilon/2$ for $i = 1, \ldots, n$. An inductive argument, together with Proposition~\ref{prop:mon}
shows  $\snorm{\signal_{k+m}^{\delta} - \signal^*} \le \sum_{i = 1}^n w_{i,k}
\snorm{\signal^{\delta}_{m-i+1} - \signal^*}$ for certain weighs  $0 < w_{i,k} < 1$ with $\sum_{i = 1}^n w_{i,k} = 1$.
Then for sufficiently large $j$ it holds that
\begin{multline*}
	\snorm{\signal_{k^*(\delta(j))}^{\delta(j)} - \signal^*}
	\leq  \max_{i =1,\ldots,n} \snorm{\signal_{m -i +1}^{\delta(j)} - \signal^*} \\
	\leq  \max_{i =1,\ldots,n} \kl{\snorm{\signal_{m -i +1}^{\delta(j)} - \signal_{m-i+1}}
	+ \snorm{\signal_{m -i +1} - \signal^*}}
	\leq  \max_{i =1,\ldots,n} \snorm{\signal_{m -i +1}^{\delta(j)} - \signal_{m-i+1}} + \varepsilon / 2 \,.
\end{multline*}
From Lemma~\ref{lem:continuity}, we have $\snorm{\signal_{m -i +1}^{\delta(j)} - \signal_{m-i+1}} \le \varepsilon/2$
for sufficiently large $j$. We thus conclude that $\snorm{\signal_{k^*(\delta(j))}^{\delta(j)} - \signal^*} \le \varepsilon$, and therefore, $\lim_{j \to \infty} \signal_{k^*(\delta(j))}^{\delta(j)} = \signal^*$.

\ref{noise_c}: This  follows similarly as in Theorem~\ref{thm:exact}~\ref{thm:exact-b}.
\end{proof}

\begin{psfrags}
\psfrag{O}{$D(R)$}
\psfrag{S}{ $\Gamma$}
\psfrag{C}{ $\partial D(R) \setminus \Gamma$}
\psfrag{f}{$f$}
\psfrag{x}{$z$}
\begin{figure}
\centering
\includegraphics[width=0.6\textwidth]{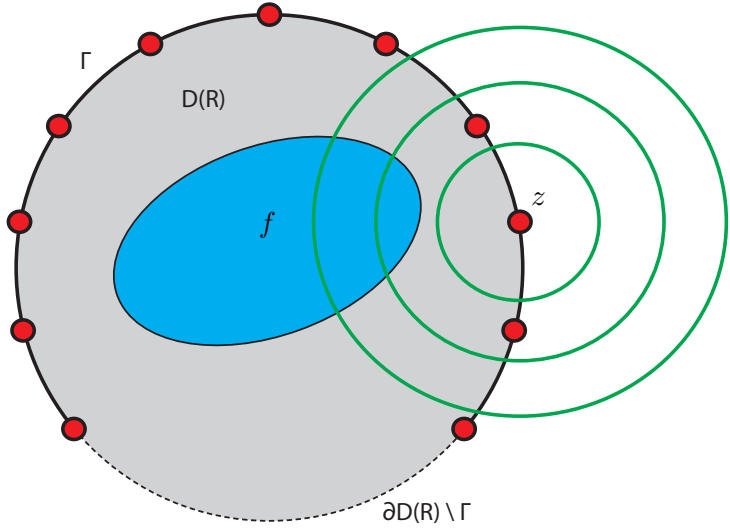}
\caption{\textsc{Recovering a function from the circular Radon transform.}
The function $f$ (representing some physical quantity of interest) is supported inside  the disc  $D(R)$.
Detectors are placed at various locations  on the observable part of the boundary
$\Gamma \subseteq \partial D(R)$ and record averages of $f$  over  circles with varying radii.
No detectors can be  placed at the  un-observable part $\partial D(R) \setminus \Gamma$
of the boundary.
\label{fig:setting}}
\end{figure}
\end{psfrags}

\section{Application to the circular Radon transform}
\label{sec:num}

In this section we apply the AVEK iteration to the limited view problem for the
circular Radon transform. We present numerical results for exact and noisy data, and
compare the AVEK iteration  to other standard iterative schemes, namely the Kaczmarz and
the  Landweber iteration.

\subsection{The circular Radon transform}

Consider the circular Radon transform,  {which} maps a function $\fsignal \colon \R^2 \to \R$
supported in the disc $D(R)  \coloneqq  \sset{ x \in   \R^2 \mid \snorm{x} < R}$  to the function
$\Mo \fsignal  \colon  \Gamma  \times [0, 2 R] \to \R$ defined by
\begin{equation} \label{eq:sm}
\kl{\Mo \fsignal} \kl{z, r}
\coloneqq
\frac{1}{2\pi}
\int_0^{2\pi} \fsignal \kl{ z + (r  \cos \beta , r \sin \beta)  } \rmd \beta
\quad \text{ for }  (z, r ) \in   \Gamma  \times [0, 2 R] \,.
\end{equation}
Here $\Gamma  \subseteq \partial D(R)$ is the observable part of the boundary  $\partial D(R)$ enclosing
the support of $\fsignal$, and the function value $\kl{\Mo \fsignal} \kl{z, r}$ is the average
of $\fsignal$ over a circle with  center  $  z \in \Gamma$ and
radius $r \in [0, 2R]$.
Recovering a function from circular  means  is important for many modern  imaging applications, where the centers of the circles  of integration  correspond to admissible locations of detectors; see Figure~\ref{fig:setting}.
For example, the circular Radon transform is essential for the hybrid imaging modalities photoacoustic  and
thermoacoustic tomography, where the function $\fsignal$ models the initial pressure of
the induced acoustic field~\cite{kuchment2011mathematics,xu2005universal,burgholzer2007temporal,ZanSchHal09b}.
The inversion from circular means is also  important for technologies such as SAR and SONAR imaging~\cite{And88,BelFel09},
ultrasound tomography~\cite{NorLin81}  or seismic imaging~\cite{BleCohSto01}.

The case $\Gamma = \partial D(R)$  corresponds to the complete data situation, where  the
circular Radon transform is known to be smoothing as {half integration}; therefore its inversion
 is mildly ill-posed. This follows, for example, from the explicit inversion formulas derived
 in~\cite{finch2007inversion}.  In this paper we are particularly  interested  in the limited data case
corresponding to $\Gamma \subsetneq \partial D(R)$. In such a situation, no explicit inversion
formulas exist. Additionally, the limited data problem is severely ill-posed and  artefacts are expected  when
reconstructing a general  function with  support in $D(R)$;
see~\cite{barannyk2016artefacts,frikel2016artifacts,nguyen2015artefacts,stefanov2013curved}.

\subsection{Mathematical  problem formulation}

In the following, let $\Gamma_i \subseteq \partial D(R)$ for $i \in \sset{ 1, \dots , n}$ denote relatively
closed subsets of $\partial D(R)$ whose interiors are pairwise disjoint. We call $\Gamma_i $ the $i$-th
detection curve  and define the $i$-th partial circular Radon  transform by
\begin{equation*}
	\Mo_i \colon L^2( D(R) )  \to   L^2 ( \Gamma_i \times [0, 2R] ; 4 r \pi ) \colon \fsignal \mapsto \rest{\Mo \fsignal}{\Gamma_i \times [0,2R] } \,.
\end{equation*}
Here $\Mo \fsignal$ is defined by~\eqref{eq:sm} and $\rest{\Mo \fsignal}{\Gamma_i \times [0,2R] }$  denotes the  restriction of  $\Mo \fsignal$ to circles whose centers are  located on $\Gamma_i$.  Further, $ L^2 ( \Gamma_i \times [0, 2R] ; 4 r \pi)$ is the Hilbert space  of all functions $\gdata_i \colon \Gamma_i \times [0, 2R] \to \R$ with
$\snorm{ \gdata_i }^2 \coloneqq 4 \pi  \int_{\Gamma_i} \int_{0}^{2R} \abs{\gdata_i(z,r)}^2  r \rmd r \rmd s(z) < \infty $, where $\rmd s$ is the arc length measure (i.e.~the standard one-dimensional surface measure).
Inverting the circular Radon transform is then equivalent to solving the system of linear of equations
\begin{equation}\label{eq:ipm}
	\Mo_i( \fsignal ) =  \gdata_i  \quad \text{ for } i=1, \dots, n \,.
\end{equation}
In the case that $\bigcup_{i=1}^n \Gamma_i = \partial D(R)$  we have complete
data; otherwise we face the limited data problem. In any case,  regularization methods have to be
applied for solving \eqref{eq:ipm}. Here we apply iterative regularization
methods for  that purpose.

\begin{lemma} \label{lem:M}
For any $i \in \sset{ 1 , \dots ,  n}$,  the following hold:
\begin{enumerate}
\item
$\Mo_i$ is well defined, bounded and linear.

\item
We have $\norm{ \Mo_i  } \leq  \sqrt{\abs{\Gamma_i} }$,
where   $\abs{\Gamma_i}$ is the arc length measure of $\Gamma_i$.

\item
The adjoint $\Mo_i^\ast  \colon L^2 ( \Gamma_i \times [0, 2R] ; 4 r \pi )  \to L^2( D(R) )$
is given by
\begin{equation*}
(\Mo_i^\ast \gdata)(x) = 2 \int_{\Gamma_i} \gdata(z, \snorm{z-x}) \rmd s(z)
\quad \text{ for } x \in  D(R) \,.
\end{equation*}

\end{enumerate}
\end{lemma}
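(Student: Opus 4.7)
The three claims are of quite different flavor, so I would handle them separately. Linearity of $\Mo_i$ is immediate from the definition~\eqref{eq:sm}, as the rotation integral commutes with linear combinations. The substantive statements are the boundedness estimate and the adjoint formula; both are obtained by Cauchy--Schwarz plus a change of variables to polar coordinates centered at $z$.

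For boundedness, the plan is to estimate $|\Mo_i \fsignal(z,r)|^2$ pointwise using the Cauchy--Schwarz inequality applied to the angular averaging integral, which yields
\[
  |\Mo_i \fsignal(z,r)|^2
  \leq
  \frac{1}{2\pi}\int_0^{2\pi} |\fsignal(z + r(\cos\beta,\sin\beta))|^2 \rmd\beta.
\]
Multiplying by $4\pi r$, integrating in $r$ and $z$, and introducing the polar change of variables $x = z + r(\cos\beta,\sin\beta)$ on the inner double integral (so that $r\rmd r\rmd\beta = \rmd x$), the integrand becomes $|\fsignal(x)|^2$ on the disc $B(z,2R)$. Since $\fsignal$ is supported in $D(R)$, this integral is bounded by $\snorm{\fsignal}^2_{L^2(D(R))}$ independently of $z$, and integration over $\Gamma_i$ gives the claimed norm estimate up to the stated constant. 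Well-definedness of $\Mo_i$ as a map into $L^2$ falls out of this same calculation, and measurability follows from Fubini once one checks that the integrand is jointly measurable (which is standard for $L^2$ functions).

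For the adjoint formula, the plan is to compute $\inner{\Mo_i \fsignal}{\gdata}$ for $\fsignal \in L^2(D(R))$ and $\gdata \in L^2(\Gamma_i\times[0,2R];4r\pi)$, plug in~\eqref{eq:sm}, and apply the same polar substitution $x = z + r(\cos\beta,\sin\beta)$. The weight $4r\pi$ in the target norm is precisely what absorbs the Jacobian and the factor $1/(2\pi)$ from the angular average, so that the double integral collapses to
\[
  \int_{\Gamma_i}\int_{B(z,2R)} \fsignal(x)\,\gdata(z,\snorm{z-x})\rmd x\rmd s(z).
\]
Swapping the order of integration via Fubini (justified by the Cauchy--Schwarz bound proved in the previous step) and using $\supp \fsignal \subseteq D(R)$ to collapse the $x$-integral to $D(R)$, one reads off the stated formula for $\Mo_i^\ast$.

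The only mildly technical point is the Fubini/measurability bookkeeping: one needs to know that $(z,x)\mapsto \gdata(z,\snorm{z-x})$ is jointly measurable, and that the swapped integral converges absolutely. Both follow routinely from the $L^2$ bound on $\Mo_i$ and the Cauchy--Schwarz inequality, so no genuinely new ideas are required; this is the standard ``formal computation made rigorous'' step.
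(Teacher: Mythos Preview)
Your proposal is correct and is precisely the Fubini-based computation the paper's one-line proof (``All claims are easily verified using Fubini's theorem'') leaves to the reader: Cauchy--Schwarz on the angular average followed by the polar change of variables $x = z + r(\cos\beta,\sin\beta)$ for the norm bound, and the same substitution inside the pairing $\inner{\Mo_i \fsignal}{\gdata}$ for the adjoint. The only thing to watch is the exact constant in part~(b): the straightforward estimate you outline produces $\snorm{\Mo_i}\le \sqrt{2\,|\Gamma_i|}$ rather than $\sqrt{|\Gamma_i|}$, but this has no bearing on the method or on any subsequent use of the lemma.
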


\begin{proof}
All claims are easily verified using Fubini's theorem.
\end{proof}

From Lemma \ref{lem:M} we conclude that \eqref{eq:ipm}  fits in the general framework
studied in this paper, with  $\To_i = \Mo_i$, $\X= L^2( D(R) )$
and  $\Y_i = L^2 (\Gamma_i \times [0, 2R]; 4 r \pi) $. {Note that the norm of $\Mo_i$ implicitly  depends on the radius $R$  through the arc length of $\Gamma_i$.} {Because the circular Radon transform is linear,  the local tangential cone condition~\eqref{eq:tcc} is satisfied with $\eta_i=0$
 for all $i \in \sset{1, \dots, n}$.} In particular, the  established convergence  analysis  for the AVEK  method can be applied. The same holds true for the Landweber and the Kaczmarz  iteration.

Suppose noisy data  $\gdata_i^\delta \in L^2 ( \Gamma_i \times [0, 2R] ; 4 r \pi )$  with
$\snorm{\gdata_i^\delta - \Mo_i \fsignal} \leq \delta_i$ are given.
The  Landweber, Kaczmarz and AVEK iteration  for
reconstructing  $\fsignal$  from such data are given by
\begin{align*}
	\fsignal^\delta_{k+1}
	& =  \fsignal^\delta_k  -  \frac{s_k}{n} \sum_{i=1}^n \Mo_i^*\skl{ \Mo_i(\fsignal_k^\delta) - \gdata^\delta_i}
\\
	\fsignal^\delta_{k+1}
	& =  \fsignal^\delta_k  -   s_k \rp_k \Mo_{\ii{k}}^*\skl{ \Mo_{\ii{k}}(\fsignal_k^\delta) - \gdata^\delta_{\ii{k}}}  \\
	\fsignal_{k+1}^{\delta}
	& =
	\frac{1}{n} \sum_{\ell = k-n+1}^k \fsignal_\ell^{\delta} - s_\ell\rp_\ell \Mo_{\ii{\ell}}^*
\skl{ \Mo_{\ii{\ell}}(\fsignal^\delta_\ell) - \gdata^\delta_{\ii{\ell}}} \,,
\end{align*}
respectively.  Here $s_k$ are step sizes and $\rp_k \in \sset{0,1}$ the additional
parameters for noisy data.
How we implement these iterations is outlined  in the following subsection.

\subsection{Numerical implementation}

In the numerical implementation, $\fsignal \colon \R^2 \to \R$ is represented by a discrete vector
$\fnum \in \R^{(N_x+1) \times (N_x+1)} $ obtained by uniform  sampling
\begin{equation*}
\fnum[j]   \simeq f((-R,-R) + j 2R / N_x) \quad \text{ for } j  = (j_1, j_2)\in \{0, \dots, N_x\}^2
\end{equation*}
on a cartesian grid.
Further,  any function $\gdata \colon  \partial D(R) \times [0, 2R] \to \R$
is  represented by a discrete vector $\gnum \in \R^{N_\ph \times (N_r+1)}$, with
\begin{equation*}
\gnum[k,\ell]  \simeq
g \kl{   \kl{R \cos ({2\pi k}/{N_\varphi}), R\sin ({2\pi k}/{N_\varphi})}, \ell \, \frac{2 R}{N_r} } \,.
\end{equation*}
Here  $N_\ph$  denotes the number  of  equidistant detector locations on the full boundary $\partial D(R)$.
We further write $K_i$ for the set of all indices in $\set{0, \dots, N_\varphi -1}$ with  detector
location  $R\bigl(\cos (2\pi k/N_\varphi), \sin (2\pi k/N_\varphi)\bigr)$ contained in  $\Gamma_i$;
 the corresponding   discrete data  are denoted by  $\gnum_i \in \R^{\sabs{K_i} \times (N_r+1)}$.

The AVEK, Landweber and Kaczmarz   iterations are implemented by replacing
$\Mo_i$ and $\Mo_i^\ast$ for any $i \in \sset{1, \dots, N}$ with discrete counterparts
\begin{align*}
	 & \Mnum_i
	 \colon \R^{(N_x+1) \times (N_x+1) }  \to \R^{\sabs{K_i} \times (N_r+1)} \,, \\ 
 	 & \Bnum_i
	 \colon  \R^{\sabs{K_i} \times (N_r+1)}  \to \R^{(N_x+1) \times (N_x+1) }  \,.
\end{align*}
For that purpose we compute the discrete  spherical means $\Mnum_i \fnum$  using the trapezoidal rule
for discretizing the integral over $\beta$ in~\eqref{eq:sm}.
The function values of $\fsignal$ required the trapezoidal rule are obtained  by the bilinear
interpolation  of $\fnum$.  The discrete circular backprojection $\Bnum_i$ is a numerical approximation  of
the adjoint of the $i$-th partial circular Radon transform. It is implemented using a backprojection
procedure described in detail in~\cite{burgholzer2007temporal,finch2007inversion}.
Note that $\Bnum_i$ is based on the continuous adjoint $\Mo^*_i $  and is not the exact adjoint of the
discretization  $\Mnum_i \fnum$.   See, for example,  \cite{wang2014discrete} for a discussion on
the use of discrete and continuous adjoints.

Using the above discretization, the  resulting discrete Landweber, Kaczmarz and
AVEK  iterations are given  by
 \begin{align*}
	\fnum_{k+1}^\delta  &=
	\fnum_{k}^\delta
    -
    \frac{s_k}{n} \sum_{i=1}^n
	 \Bnum_i
	\skl{ \Mnum_i\fnum^\delta_k - \gnum^\delta_k} \\
	\fnum_{k+1}^\delta  &=
	\fnum_k^{\delta} - s_k  \rp_k \Bnum_{\ii{k}}
	\skl{ \Mnum_{\ii{k}}\fnum^\delta_k - \gnum^\delta_{\ii{k}}}  \\
	\fnum_{k+1}^\delta  &=
	\frac{1}{n} \sum_{\ell = k-n+1}^k
	\fnum_{\ell}^{\delta} - s_\ell \rp_\ell \Bnum_{\ii{\ell}}
	\skl{ \Mnum_{\ii{\ell}}\fnum_\ell^\delta - \gnum^\delta_{\ii{\ell}}} \,.
\end{align*}
respectively. Here $\gnum^\delta_i \in \R^{\abs{K_i} \times (N_r+1)}$ are discrete
noisy data, $s_k$ are step size parameters and  $\rp_k \in \sset{0,1}$ additional
tuning parameters  for noisy data. We always choose the zero vector $\tt 0\in \R^{(N_x+1) \times (N_x+1)}$ as the initialization; that is, $\fnum_{1}^\delta \coloneqq \tt 0$ for the Landweber and the Kaczmarz iteration, and $\fnum_{1}^\delta  = \cdots = \fnum_{n}^\delta \coloneqq \tt 0$ for the AVEK iteration.

\begin{figure}[htb!]\centering
\includegraphics[width=0.4\textwidth]{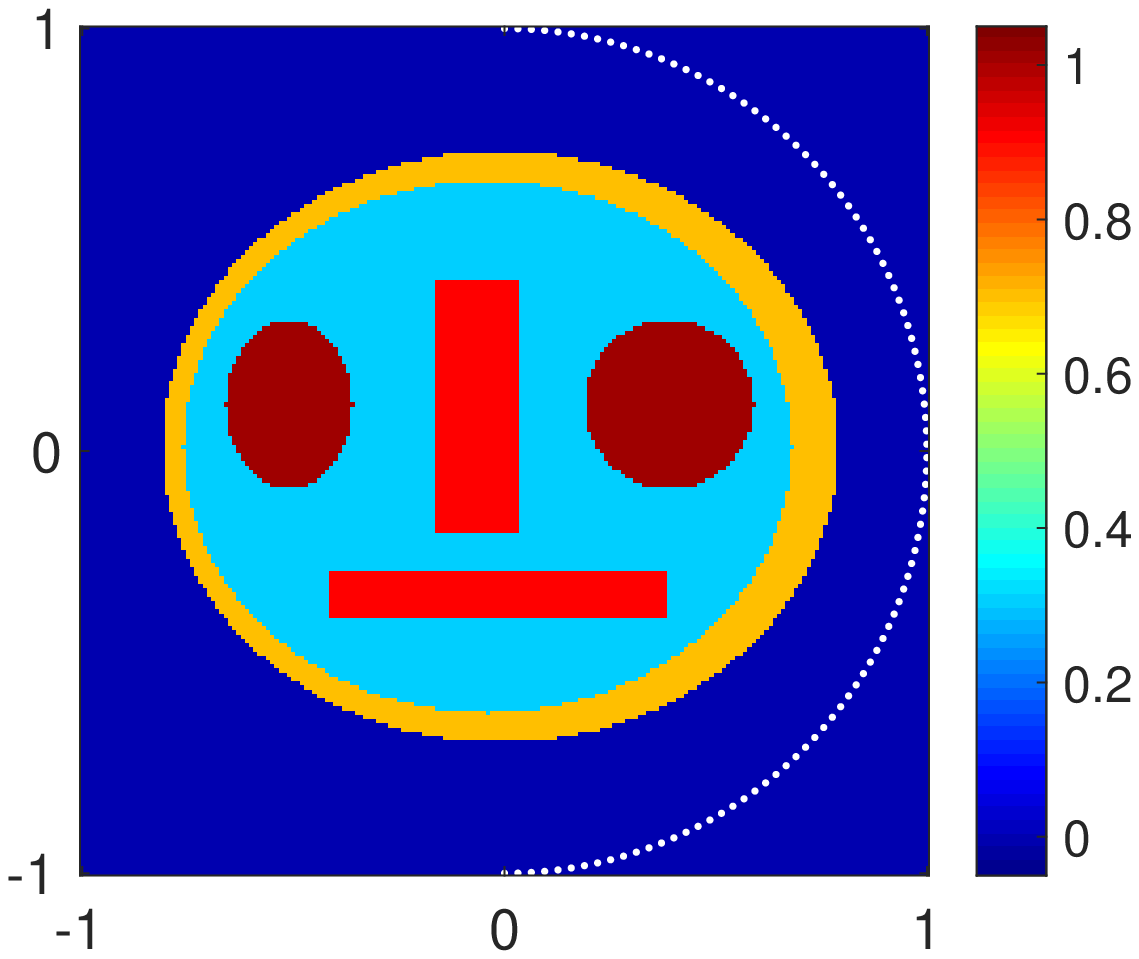}\hspace{0.8cm}
\includegraphics[width=0.4\textwidth]{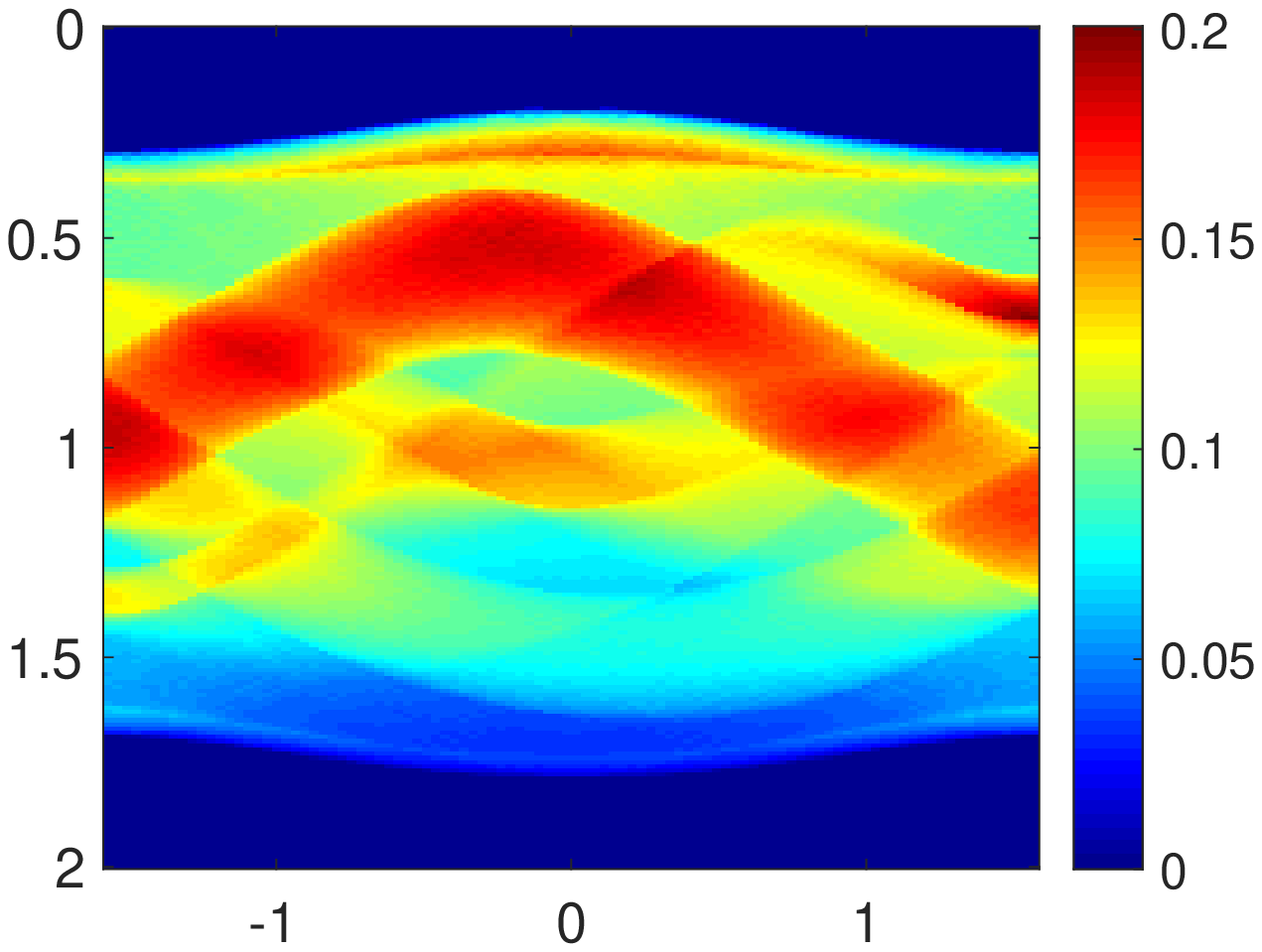}
\caption{Left: The phantom  $\fnum \in \R^{201 \times 201}$ discretizing the head like function supported in a
disc of radius 1. The white dots indicate locations of detectors. Right: The simulated  discrete circular Radon transform $\gnum \in \R^{100 \times 201}$.
The horizontal axis  is the detector location in $[-\pi/2, \pi/2]$; the vertical  axis the radius in $[0, 2]$.
Any partial data $\gnum_i \in \R^{1 \times 201}$ corresponds to a column.}
\label{fig:fg}
\end{figure}

\subsection{Numerical simulations}\label{ss:num}

In the  following numerical results we consider  the case where $R = 1$. We
assume  measurements  on the half circle $\Gamma  = \sset{ (\zsignal_1, \zsignal_2) \in \sph^1  \mid
\zsignal_2 >0}$, choose $N_x  = N_r = 200$ and use $N = 100$ detector locations on $\Gamma$.
Further, we use a partition  of $\Gamma$  in 100 arcs $\Gamma_i$ of equal arc length {(i.e.~$n =100$)}.  The phantom
$\fnum \in \R^{201 \times 201}$  used for the  presented results and the numerically
computed data  $\Mnum_i \fnum \in \R^{1 \times 201}$ for $i =1, \dots, 100$ are shown  Figure~\ref{fig:fg}.
We refer to  one cycle of the iterative methods after we performed an update using any of the equation.
One such  cycle consists of  $n$ consecutive  iterative updates for the AVEK and the Kaczmarz
iteration and one iterative update for the Landweber  iteration. The numerical  effort for one
cycle in any of the considered methods  is given by  $\mathcal{O}(N N_x^2)$,  with similar
leading  constants.  {For a fair comparison of step sizes, we rescale any of the  operators  $\Mo_i$  and $\Mo$ in such a way that $\norm{\Mo_i} \simeq \norm{\Mo} \simeq 1$ for  $i =1, \dots, 100$. Further, in the Kaczmarz and the  AVEK method the equations are randomly rearranged prior to each cycle.} {We empirically observed that this accelerates the  convergence of both methods.}

\subsubsection*{Results for exact data}

We first consider the case of exact data shown in Figure~\ref{fig:fg}.  The step sizes for Landweber, Kaczmarz and AVEK are chosen constant and at different values. The convergence behavior during the first  80 cycles is shown in Figure~\ref{fig:vars}. As can be seen, the Landweber is the slowest and the Kaczmarz  and the AVEK are comparably fast under suitable choice of step sizes.  Note that although our convergence analysis {of AVEK} assumes a  step size below 1, the AVEK method allows for a rather wide range of step sizes (up to 30 for this example), and that larger step sizes turn out to be  stable and yield faster convergence. This is not the case for the Landweber and the Kaczmarz method, where a step size above 3 yields divergence.

\begin{figure}[htb!]\centering
\includegraphics[width=0.32\textwidth]{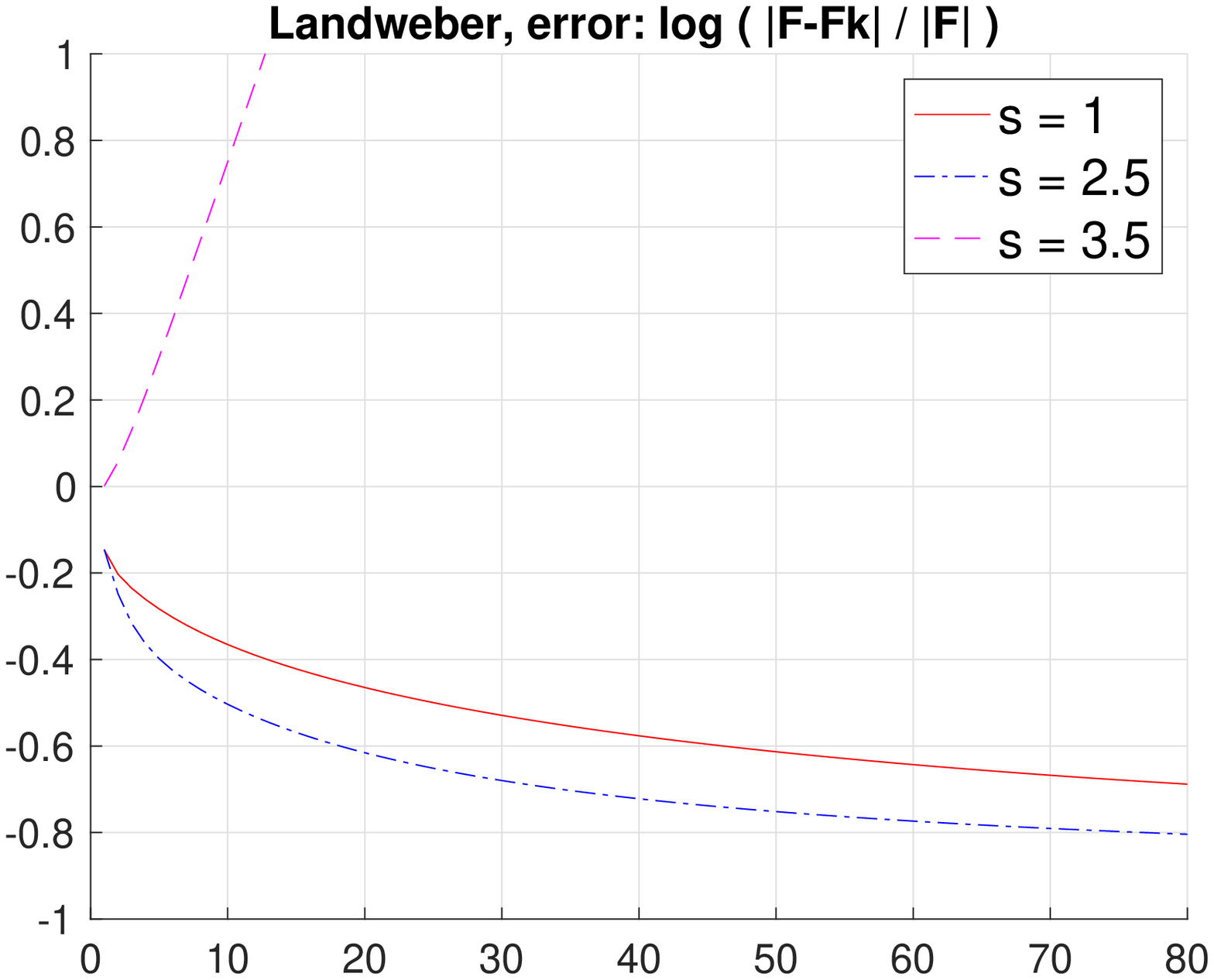}\hspace{0.1cm}
\includegraphics[width=0.32\textwidth]{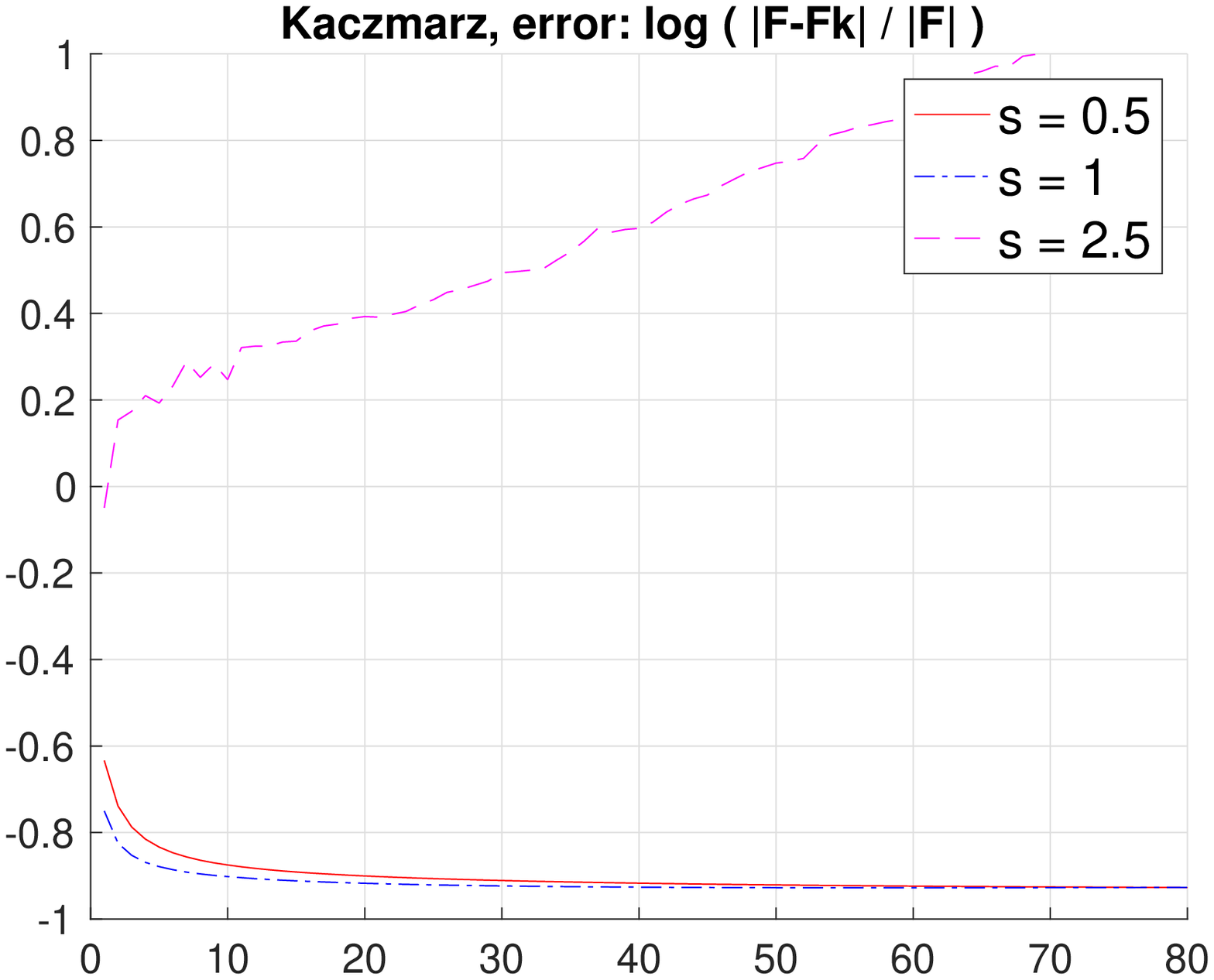}\hspace{0.1cm}
\includegraphics[width=0.32\textwidth]{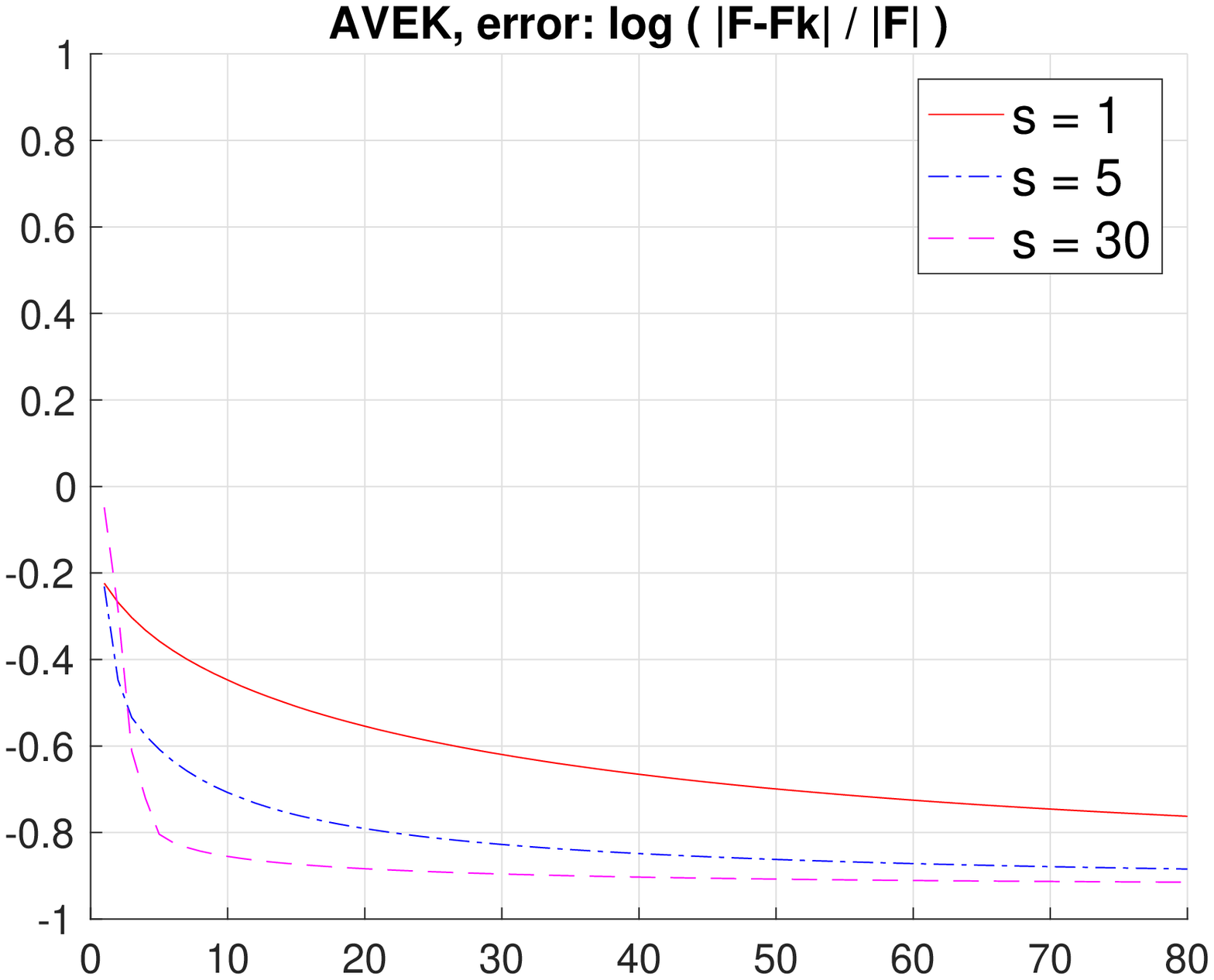}\vspace{0.3cm}\\
\includegraphics[width=0.32\textwidth]{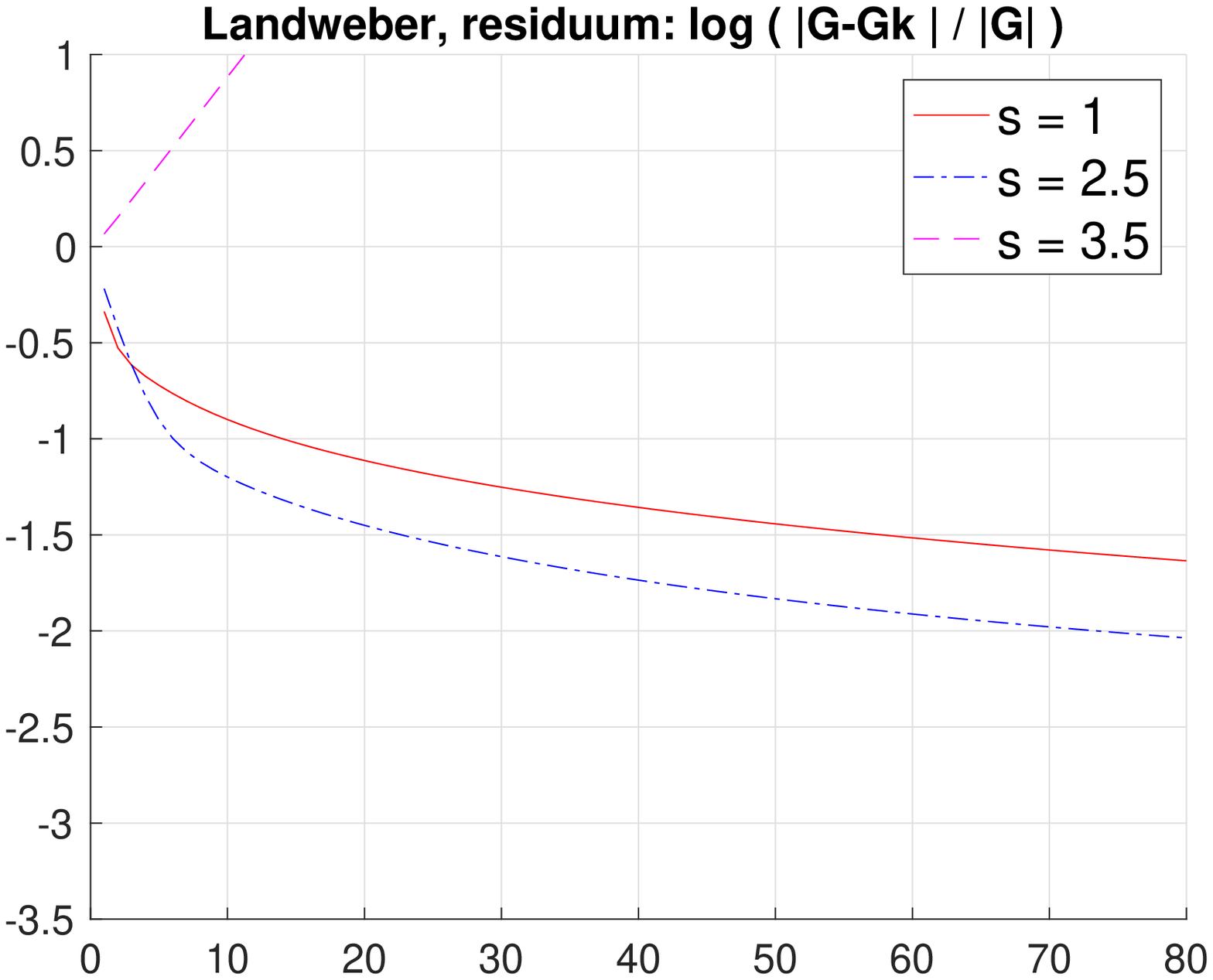}\hspace{0.1cm}
\includegraphics[width=0.32\textwidth]{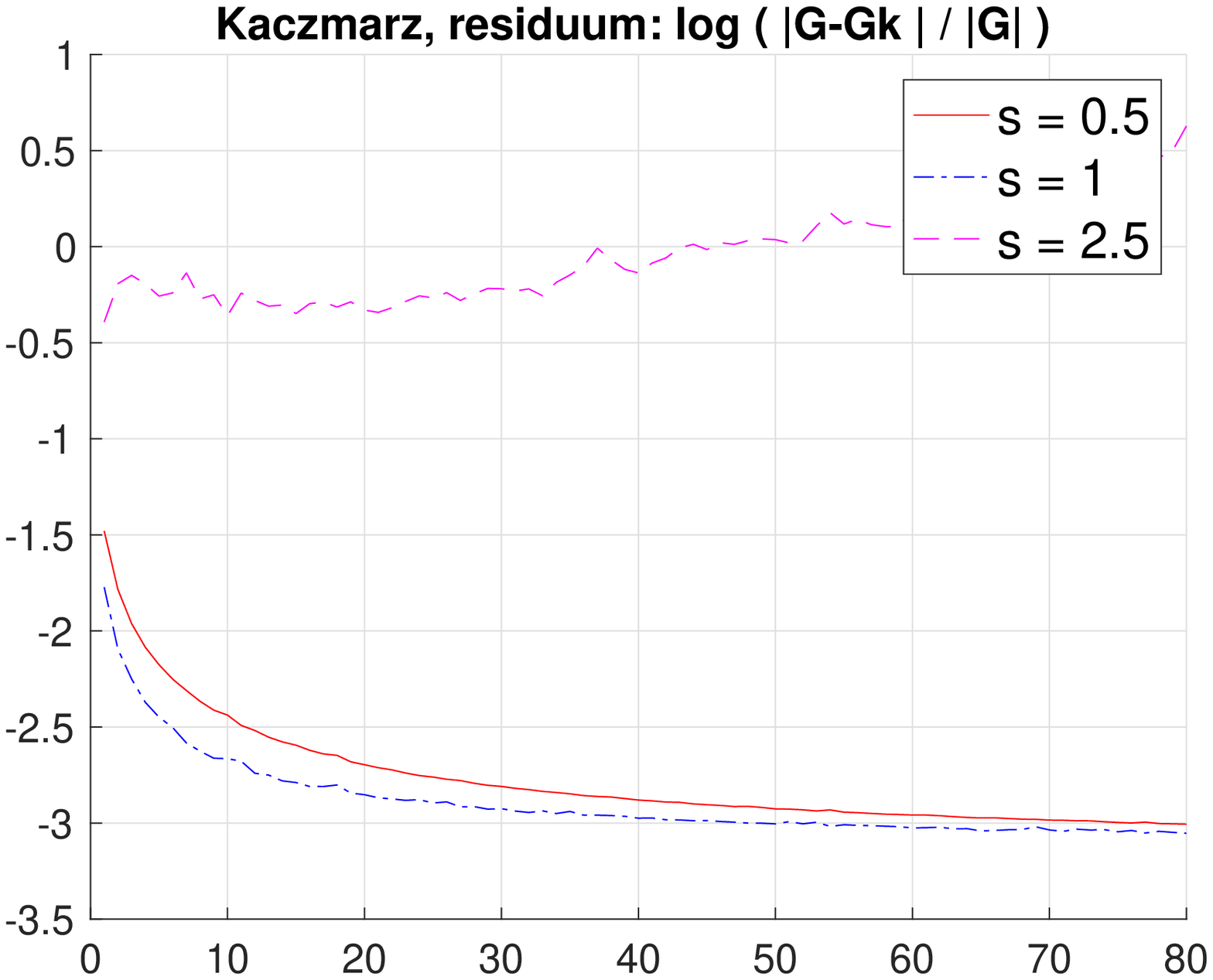}\hspace{0.1cm}
\includegraphics[width=0.32\textwidth]{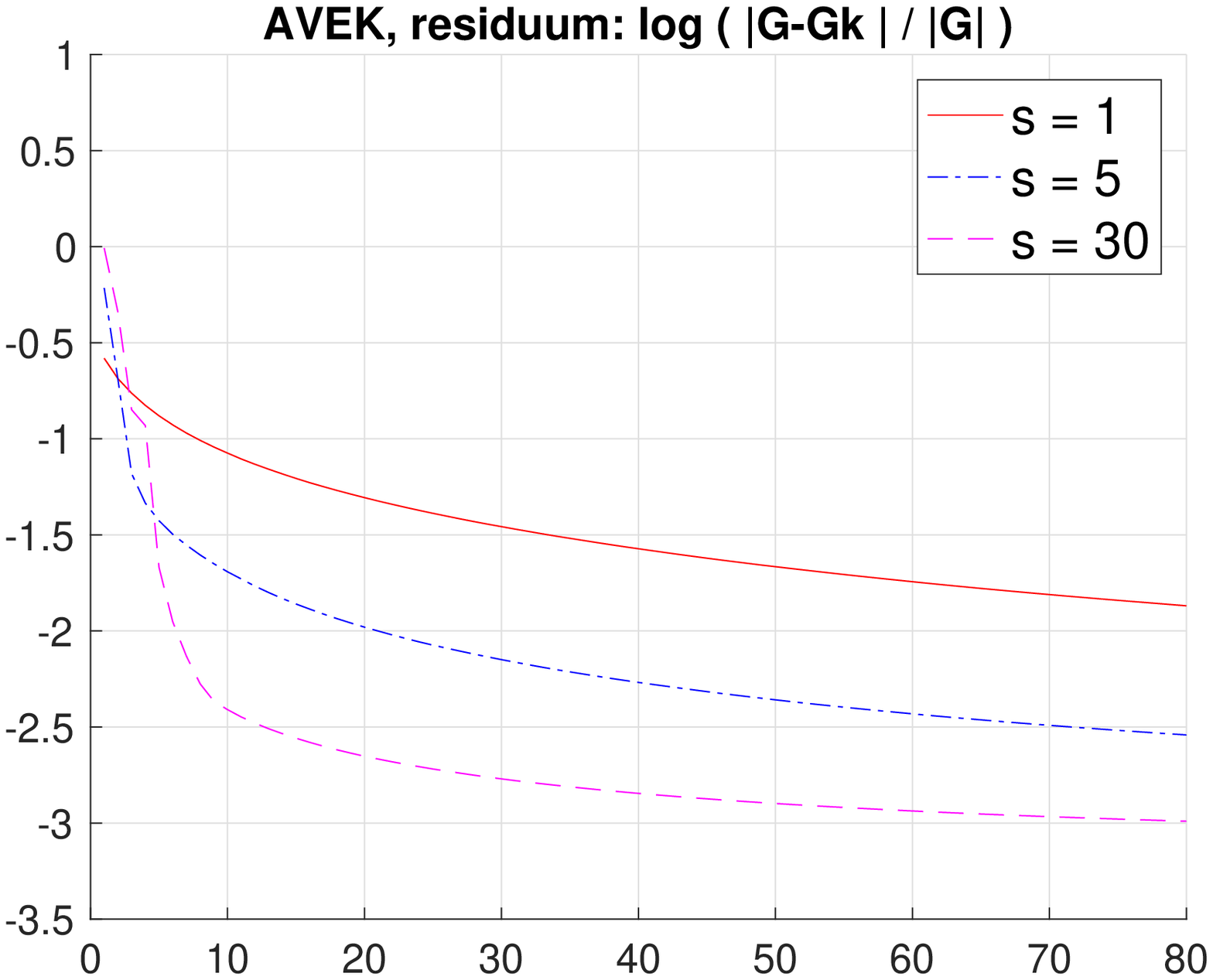}
\caption{{Residuum and relative reconstruction  error (after taking logarithm to basis 10) of Landweber, Kaczmarz and AVEK with different step sizes for exact data during the first 80 cycles.}}
\label{fig:vars}
\end{figure}

In order to visually compare the results, we choose proper step sizes for all methods in the sense that the iterations are fast and on the other hand stable. More precisely,  for the Landweber iteration the step size has been taken as $s_{\rm LW} = 2.5$, for the Kaczmarz iteration as $s_{\rm K} = 1$ and for the AVEK as $s_{\rm AVEK} = 30$. In Figure~\ref{fig:exact} we show  reconstructions using the three considered methods after 10, 20 and 80 iterations.  In any case, one notes reconstruction artifacts outside the convex hull of the detection curve, which is expected using limited view data \cite{barannyk2016artefacts,frikel2016artifacts,nguyen2015artefacts,stefanov2013curved}. Inside the convex hull, the  Kaczmarz and the AVEK give quite accurate results already after a reasonable number of cycles.

\begin{figure}[htb!]\centering
\includegraphics[width=0.28\textwidth]{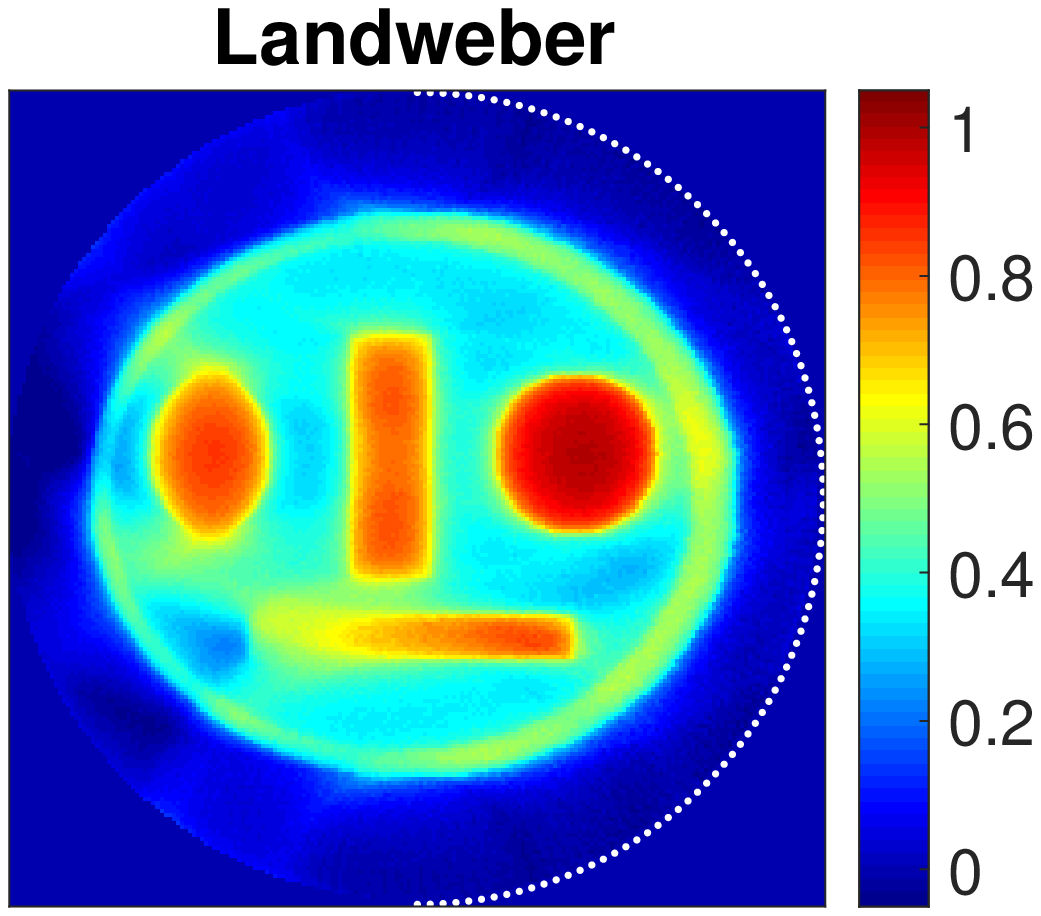}\hspace{0.15cm}
\includegraphics[width=0.28\textwidth]{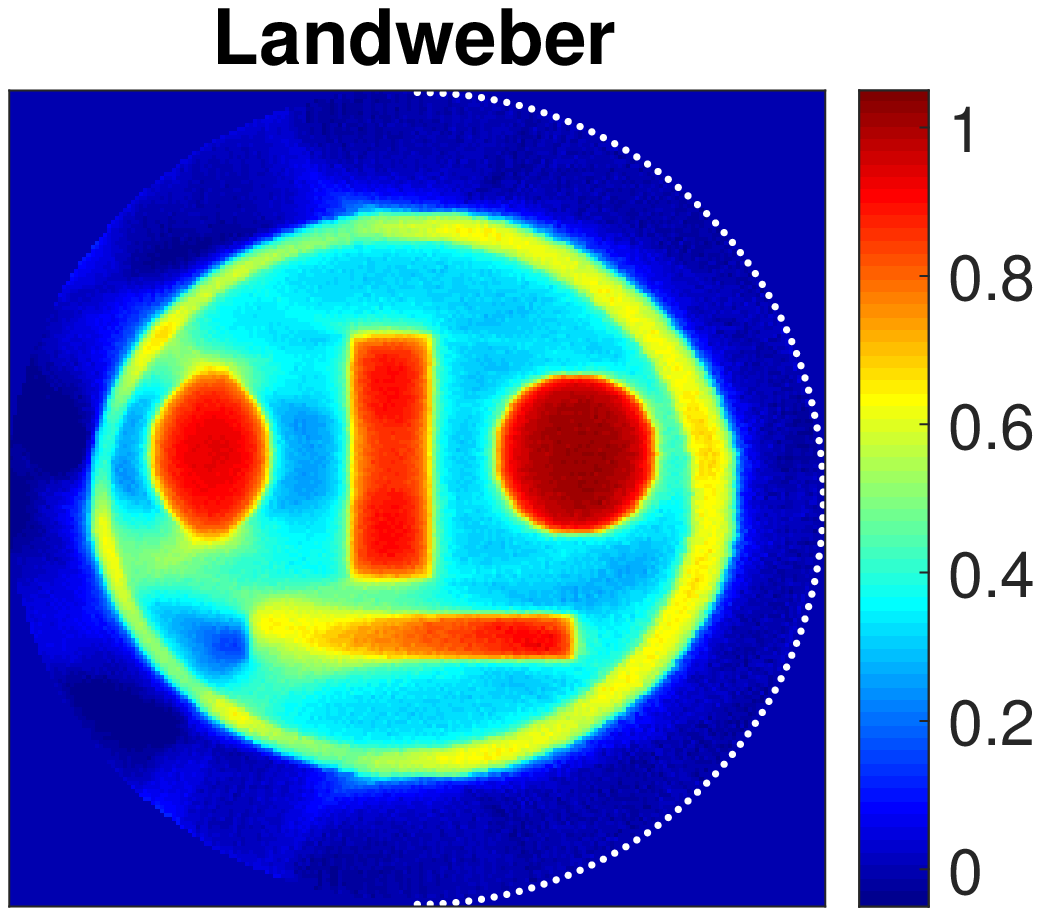}\hspace{0.15cm}
\includegraphics[width=0.28\textwidth]{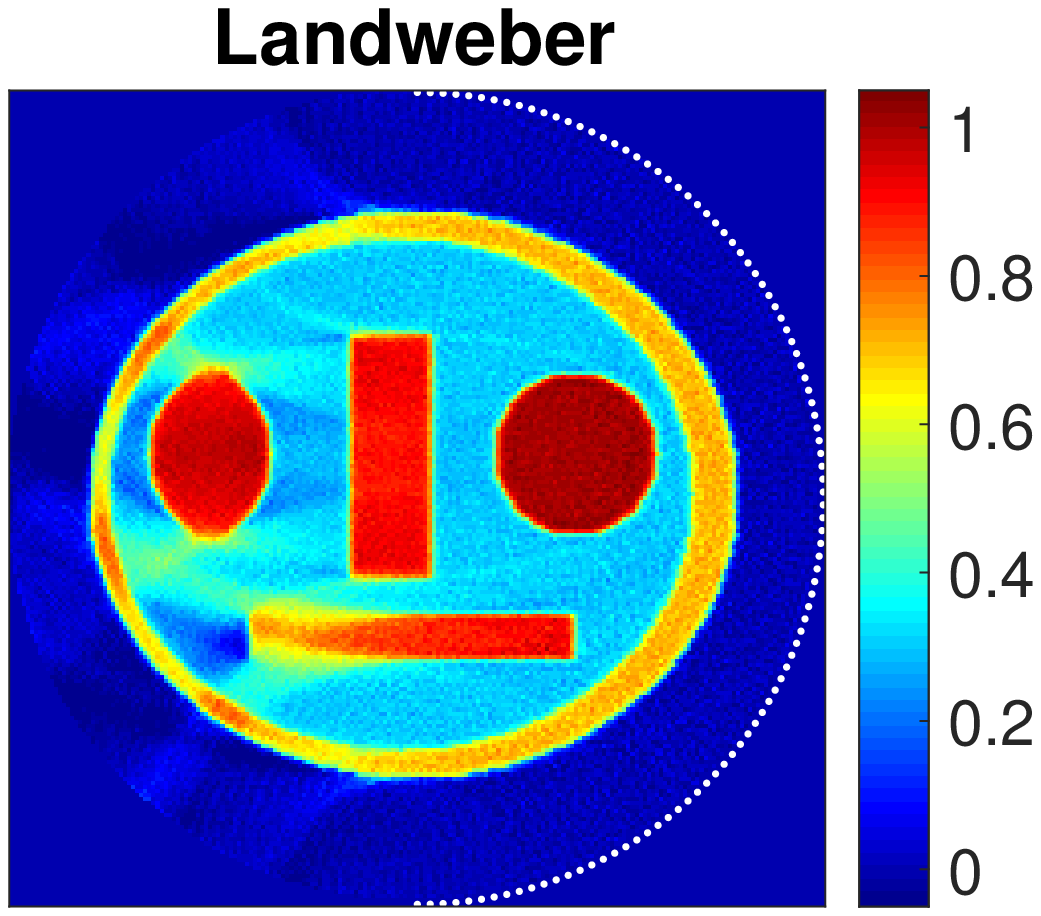} \vspace{0.15cm}\\
\includegraphics[width=0.28\textwidth]{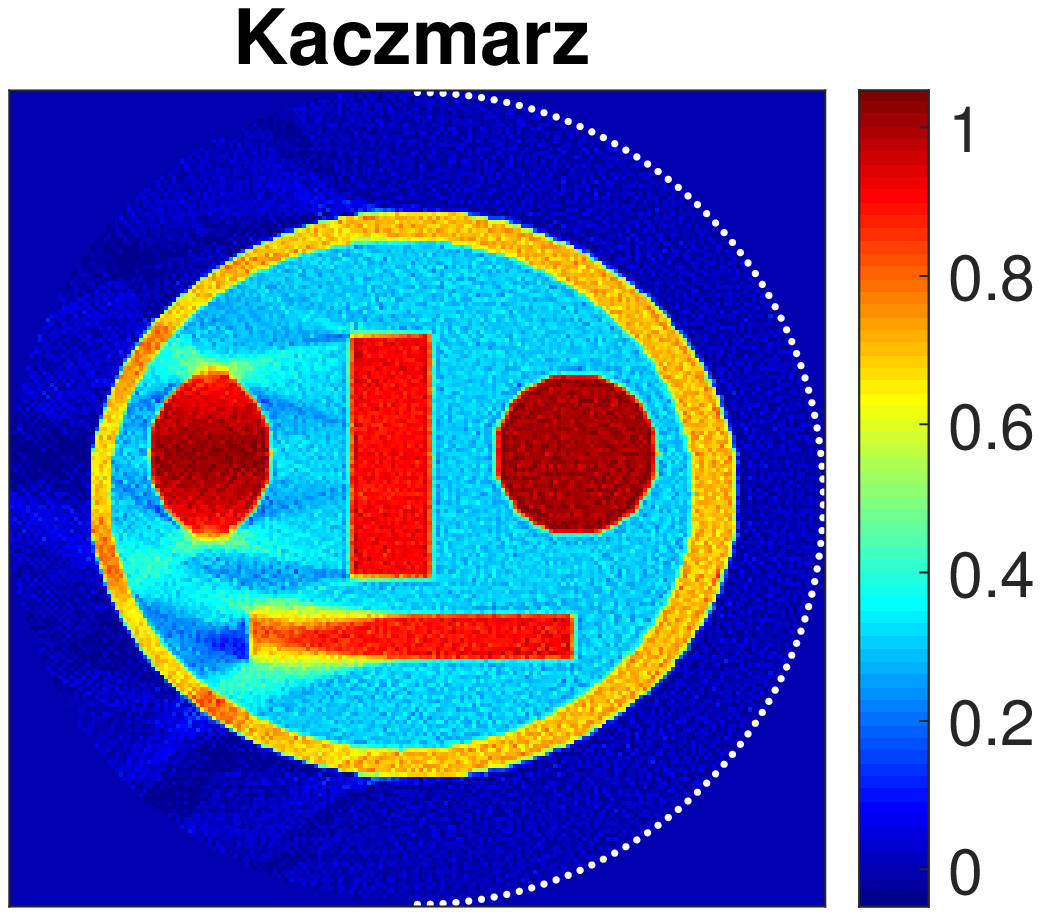}\hspace{0.15cm}
\includegraphics[width=0.28\textwidth]{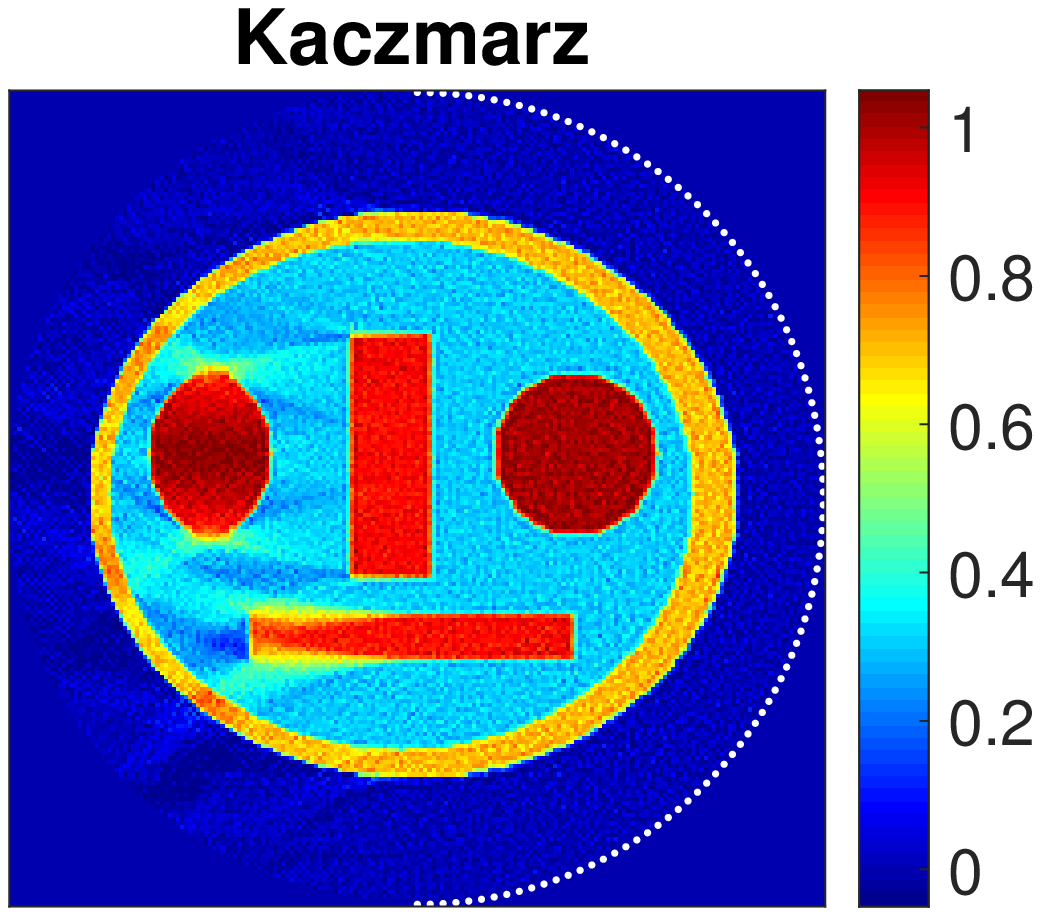}\hspace{0.15cm}
\includegraphics[width=0.28\textwidth]{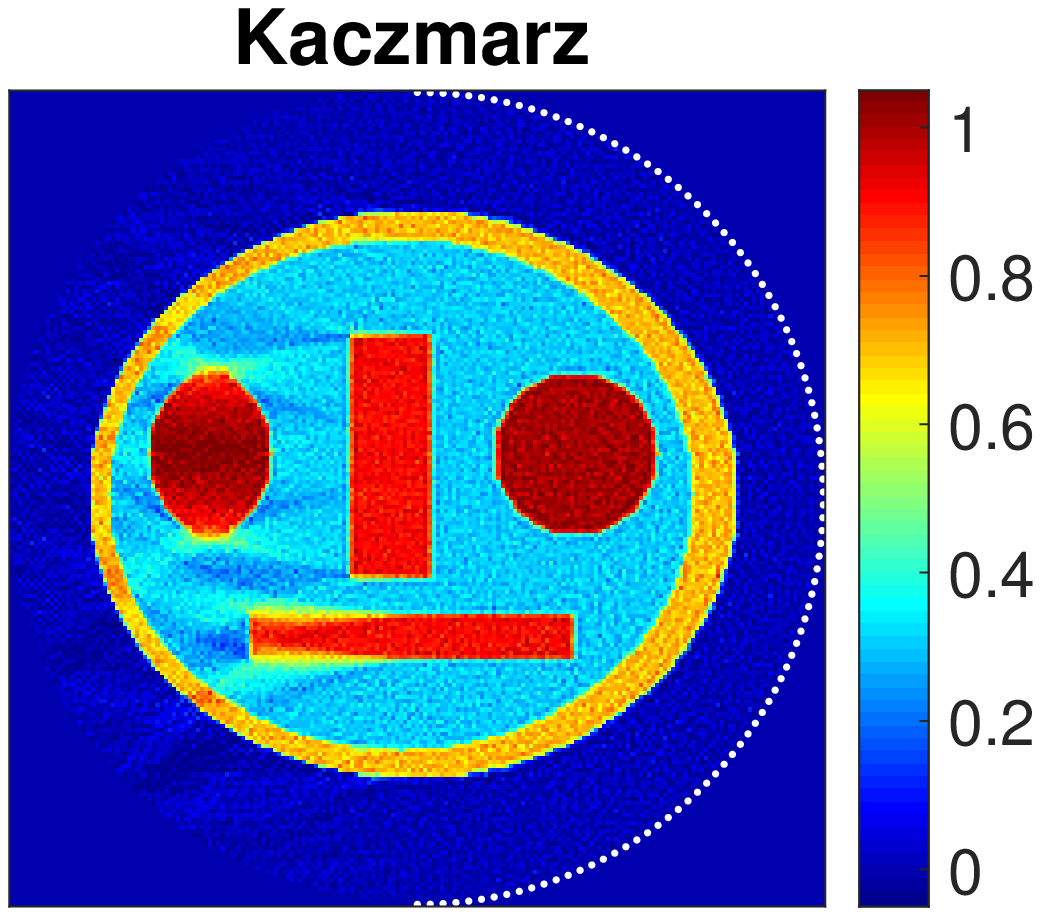} \vspace{0.15cm} \\
\includegraphics[width=0.28\textwidth]{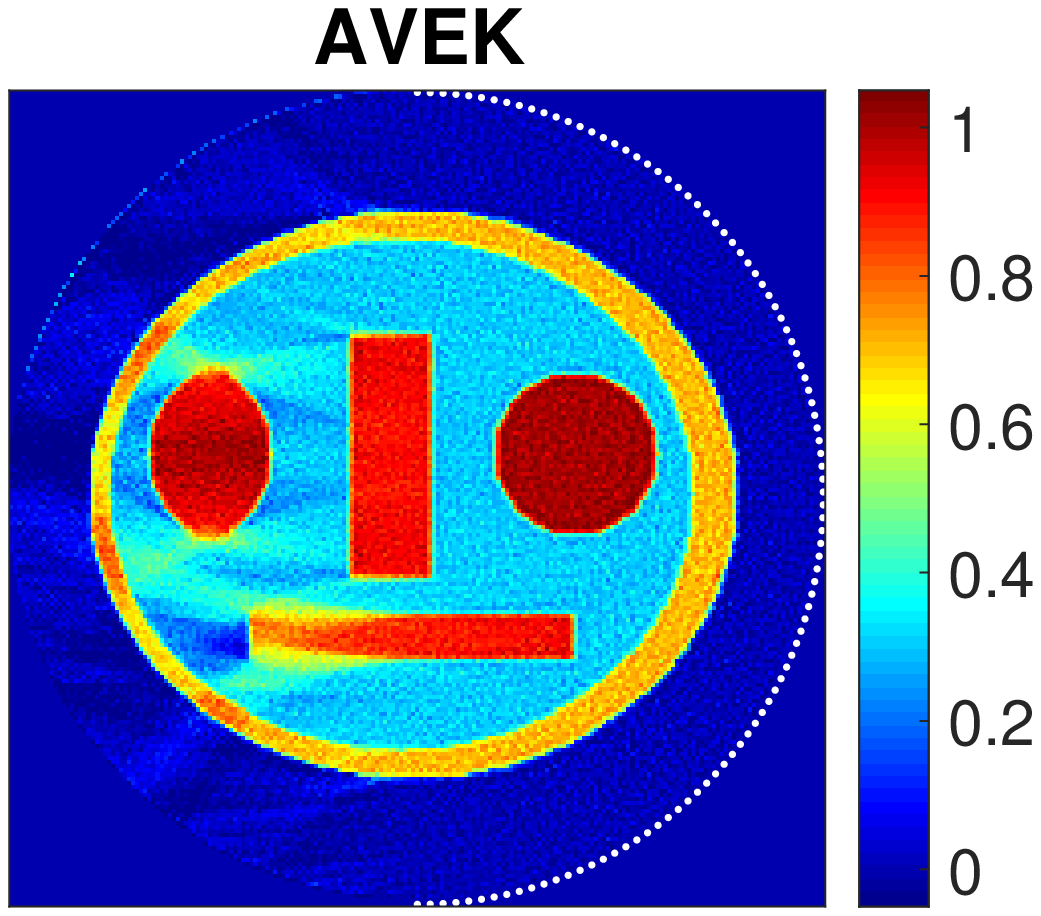}\hspace{0.15cm}
\includegraphics[width=0.28\textwidth]{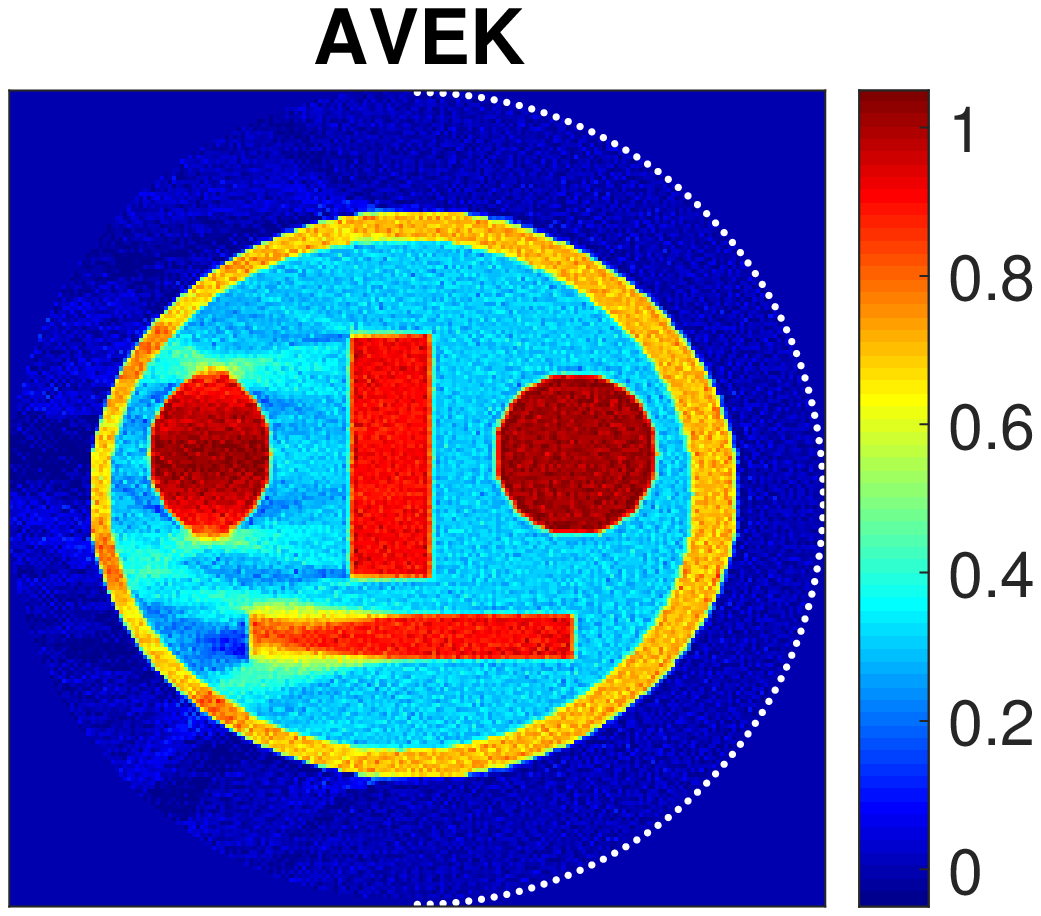}\hspace{0.15cm}
\includegraphics[width=0.28\textwidth]{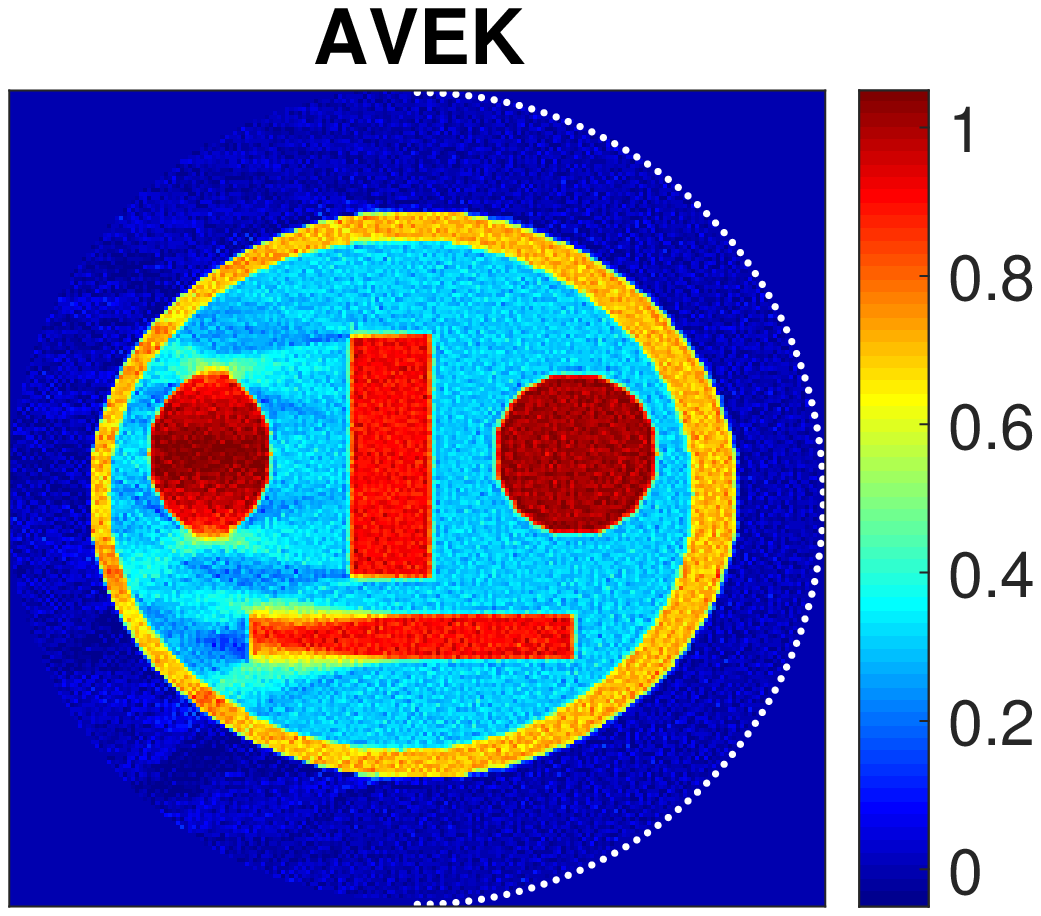}
\caption{{Reconstructions by Landweber, Kaczmarz and AVEK with proper choice of step sizes from exact data after 10 cycles (left column), 20 cycles (center column) and 80 cycles (right column).}}
\label{fig:exact}
\end{figure}

\subsubsection*{Results for noisy data}

We also tested the iterations on data $\gnum^\delta$ after adding $5\%$ noise. For that  purpose  added Gaussian white noise to $\gnum^\delta$ such  that  the resulting data  satisfy $\snorm{\gnum^\delta - \gnum}/ \snorm{\gnum} \simeq 0.05$. {Different step sizes are taken for each method} as in the exact data case and  $\tau_i$ are chosen in such a way that no iterations are skipped. The convergence behavior during the first 80 cycles using noisy data is shown in Figure~\ref{fig:varsn}. {The Kaczmarz method is the fastest, followed by the AVEK method, and the Landweber method is again the slowest. As in the exact data case, the AVEK iteration allows for way larger step sizes than the other two methods. Further, if step sizes are sufficiently small, the residuals $\snorm{\Mo \fnum_k^\delta  - \gnum^\delta }$ are decreasing for all methods, while the reconstruction errors $\snorm{ \fnum_k^\delta - \fnum}$ show the typical semi-convergence behavior for ill-posed problems. Interestingly, we point out that, in sharp contrast to the exact data case, iterations with small step sizes may outperform those with large step sizes. In noisy data case, slower convergence may provide smaller minimal reconstruction errors and further yields higher robustness in the choice of the iteration number as  regularization parameter.}

\begin{figure}[htb!]\centering
\includegraphics[width=0.32\textwidth]{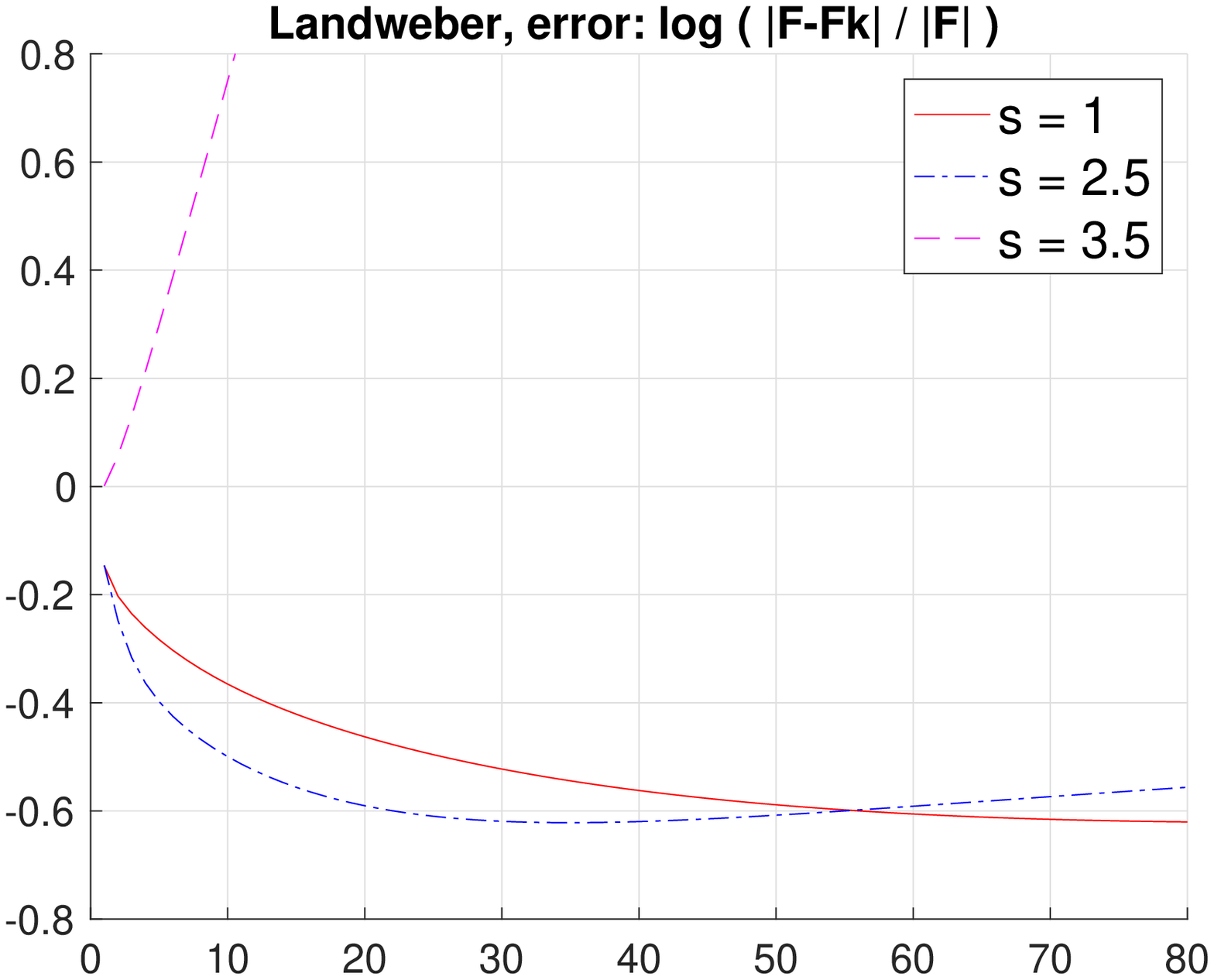}\hspace{0.1cm}
\includegraphics[width=0.32\textwidth]{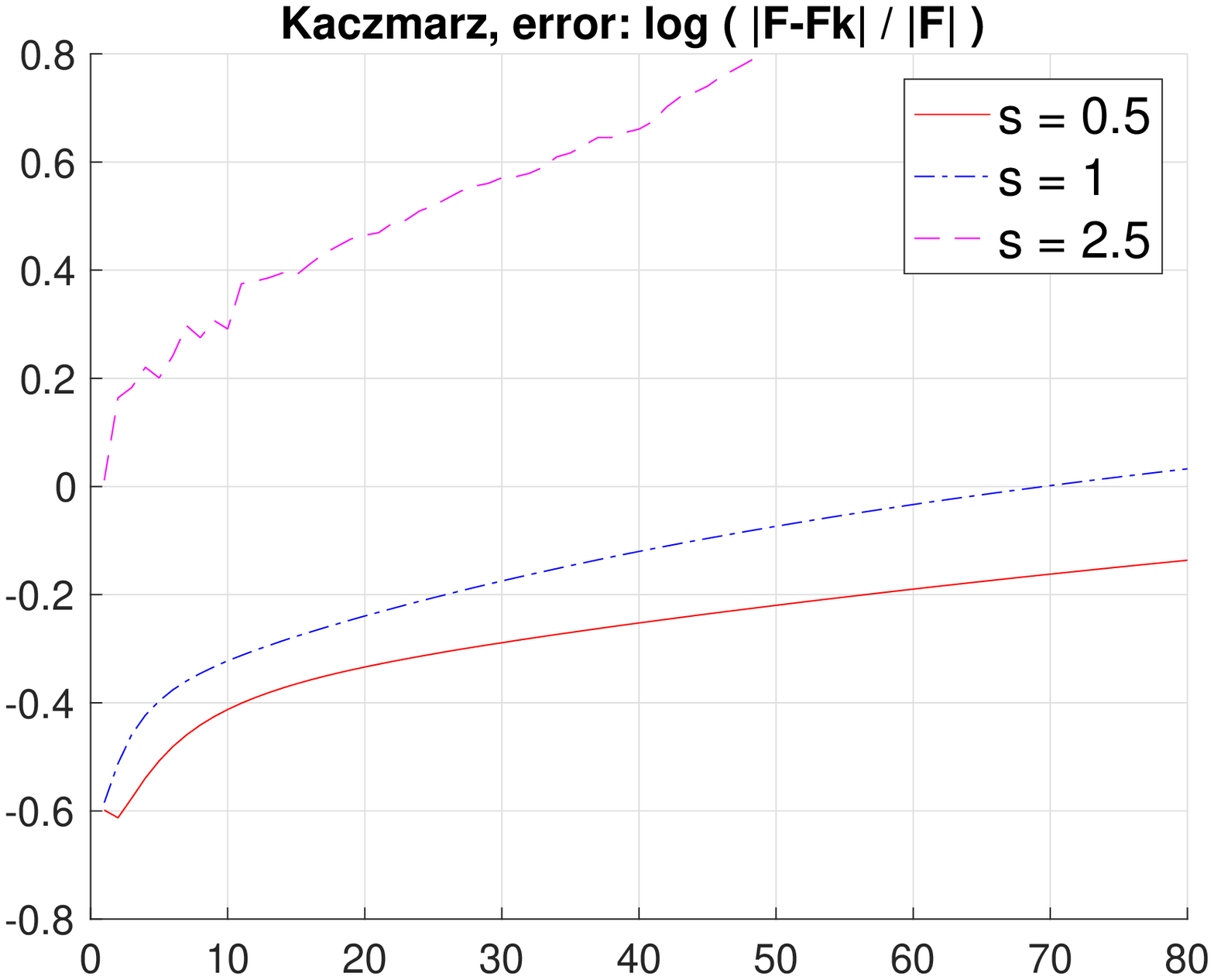}\hspace{0.1cm}
\includegraphics[width=0.32\textwidth]{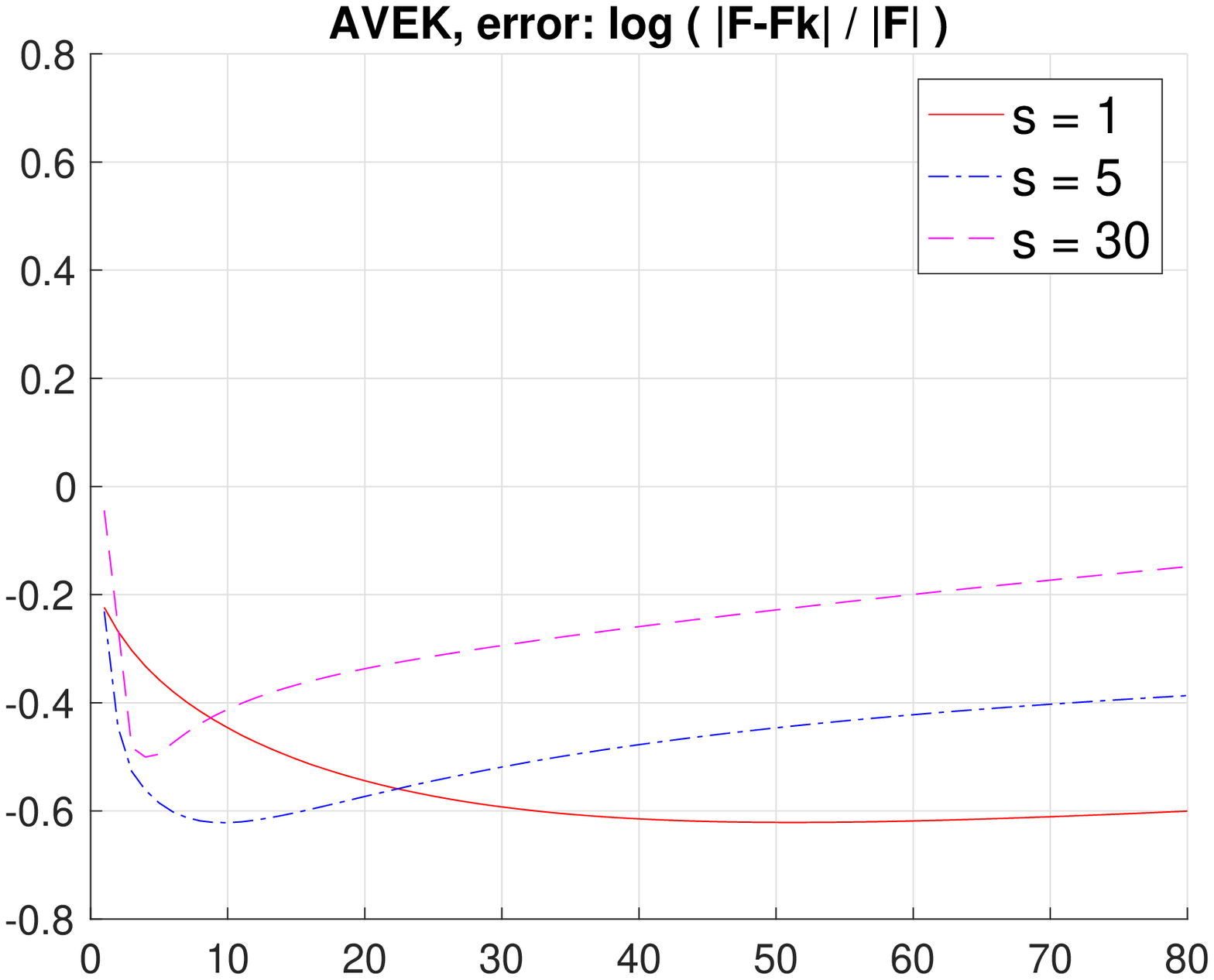}\vspace{0.3cm}\\
\includegraphics[width=0.32\textwidth]{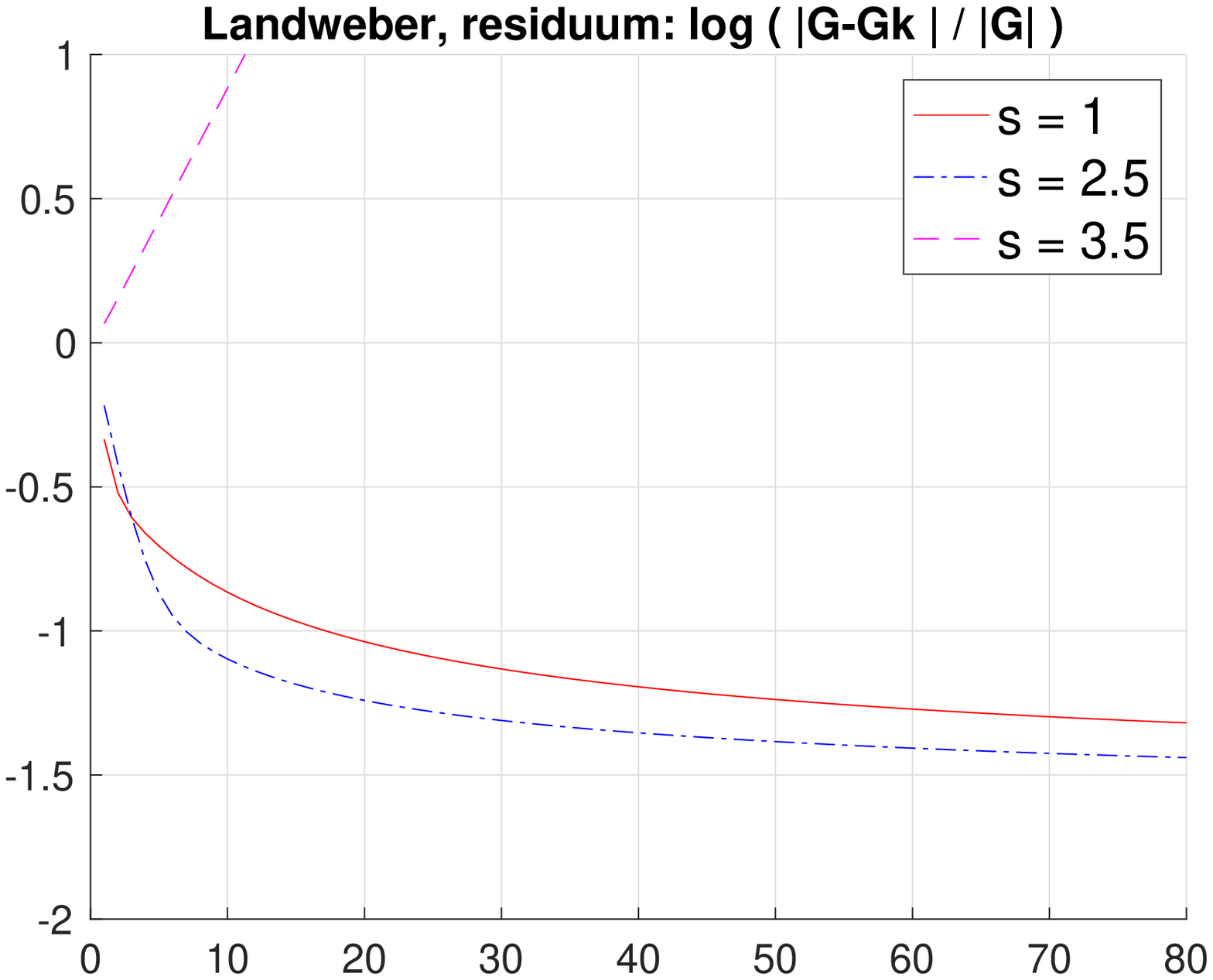}\hspace{0.1cm}
\includegraphics[width=0.32\textwidth]{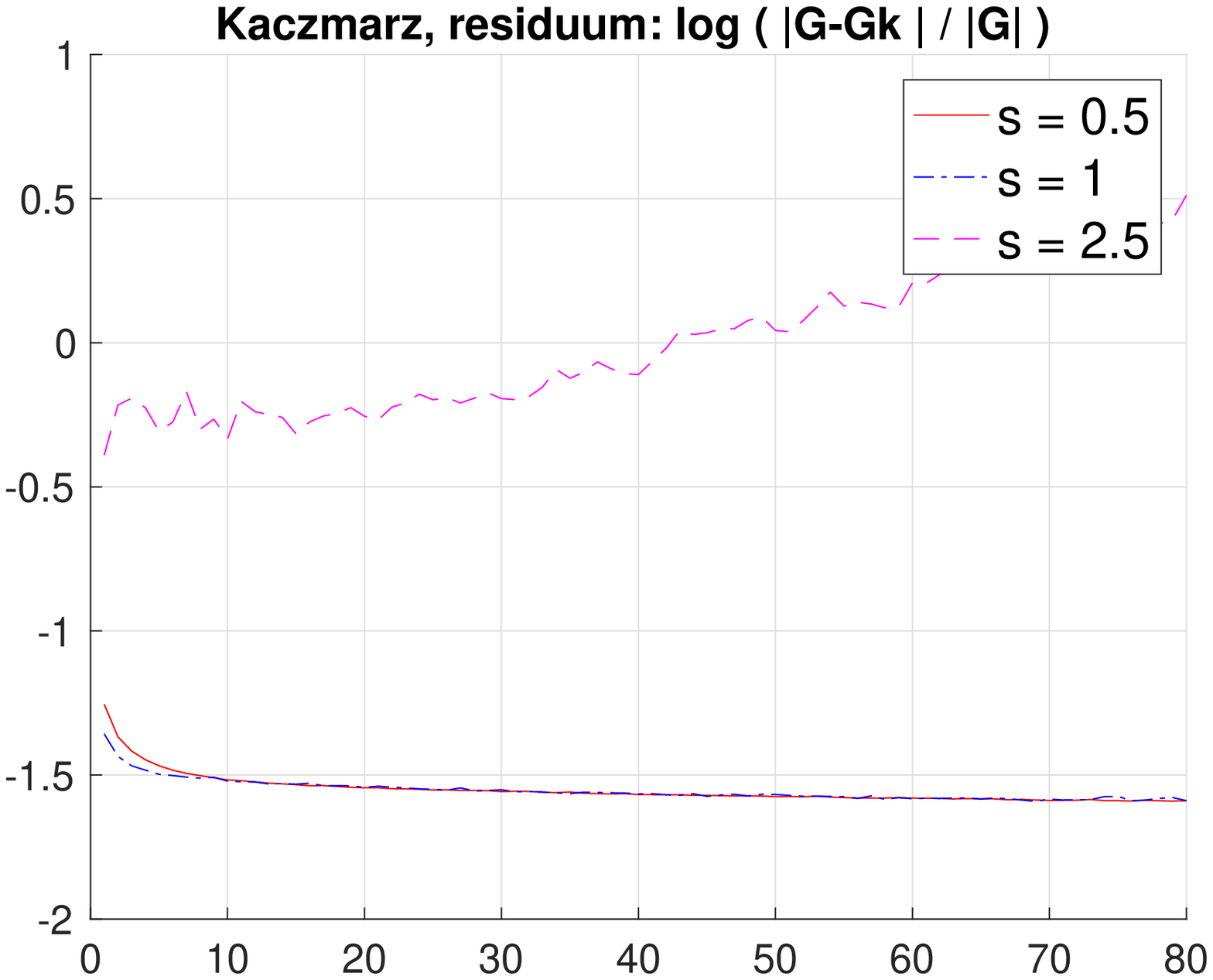}\hspace{0.1cm}
\includegraphics[width=0.32\textwidth]{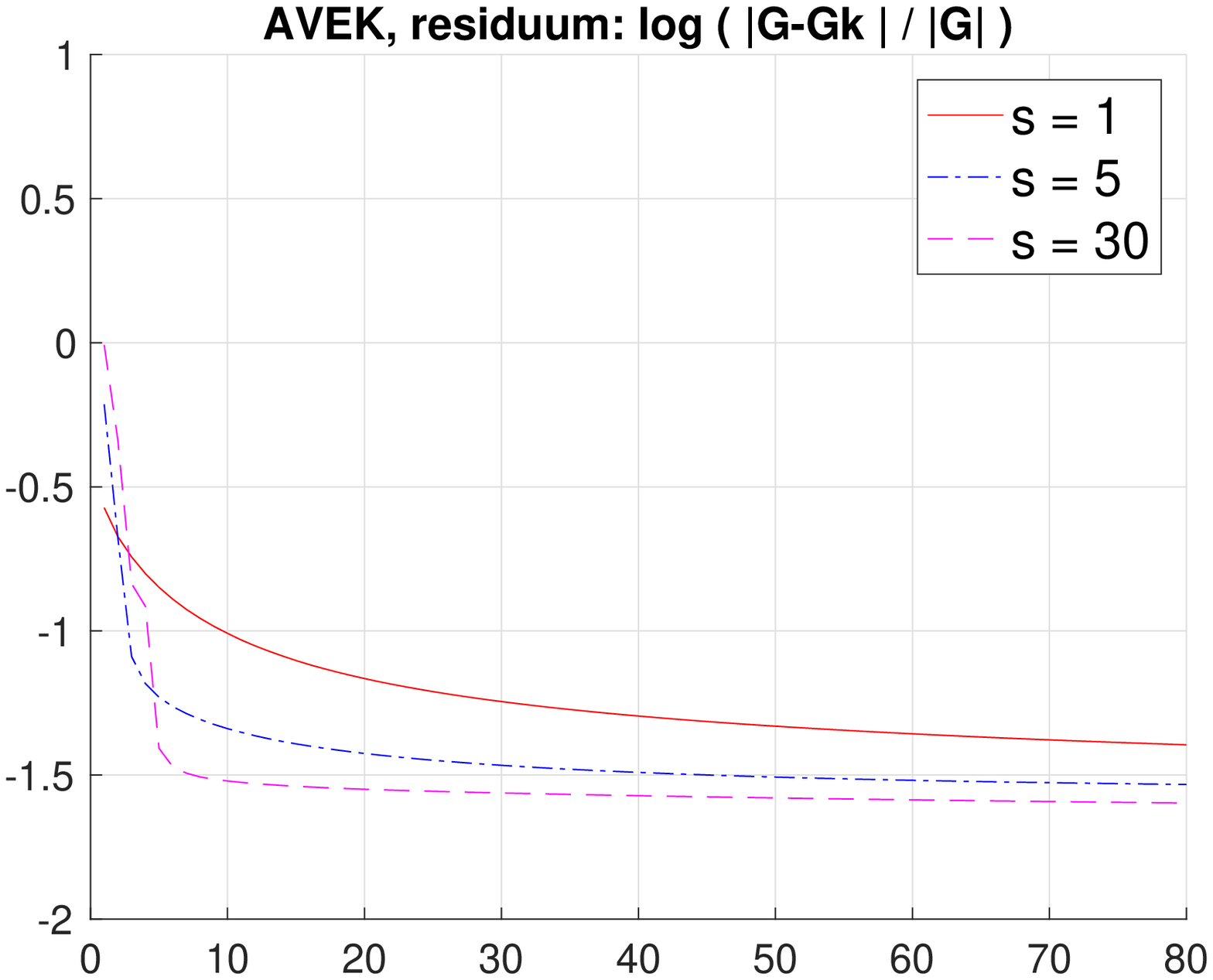}
\caption{{Residuum and relative reconstruction  error (after taking logarithm to basis 10) of Landweber, Kaczmarz and AVEK with different step sizes for noisy data during the first 80 cycles.}}
\label{fig:varsn}
\end{figure}

For comparison of visual quality, we choose  the empirically  best step sizes for all methods; namely, $s_{\rm LW} = 2.5$ for the Landweber iteration, $s_{\rm K} = 1$ for the Kaczmarz iteration and $s_{\rm AVEK} = 5$ for the AVEK iteration. The minimal $L^2$-reconstruction errors have been obtained after {35} iterations for the Landweber iteration, after 2 cycles for the Kaczmarz iteration, and after {10} cycles for the AVEK. The corresponding relative reconstruction errors $\snorm{ \fnum_k^\delta - \fnum} / \snorm{\fnum}$ are {$0.0595$} for the Kaczmarz method and $0.0571$ for the Landweber as well as the AVEK method. The Landweber and the AVEK method therefore slightly outperform the Kaczmarz method in terms of the minimal reconstruction error. Reconstruction results after 2, 10 and {35} iterations are shown in Figure~\ref{fig:noisy}. Further, through extensive simulations (not shown here), we find that the choice $s_{\rm AVEK} = 5$ for the AVEK method is robust to different noise levels, which is thus recommended as the default step size for noisy data in practice. 

In summary, from the simulations with exact and with noisy data, we conclude that the AVEK method is as comparably fast as the Kaczmarz method, and is meanwhile surprisingly stable with respect to the choice of step sizes. Such  favorable properties are also observed for other data sets and are highly valuable in a great many of applications.

\begin{figure}[htb!]\centering
\includegraphics[width=0.28\textwidth]{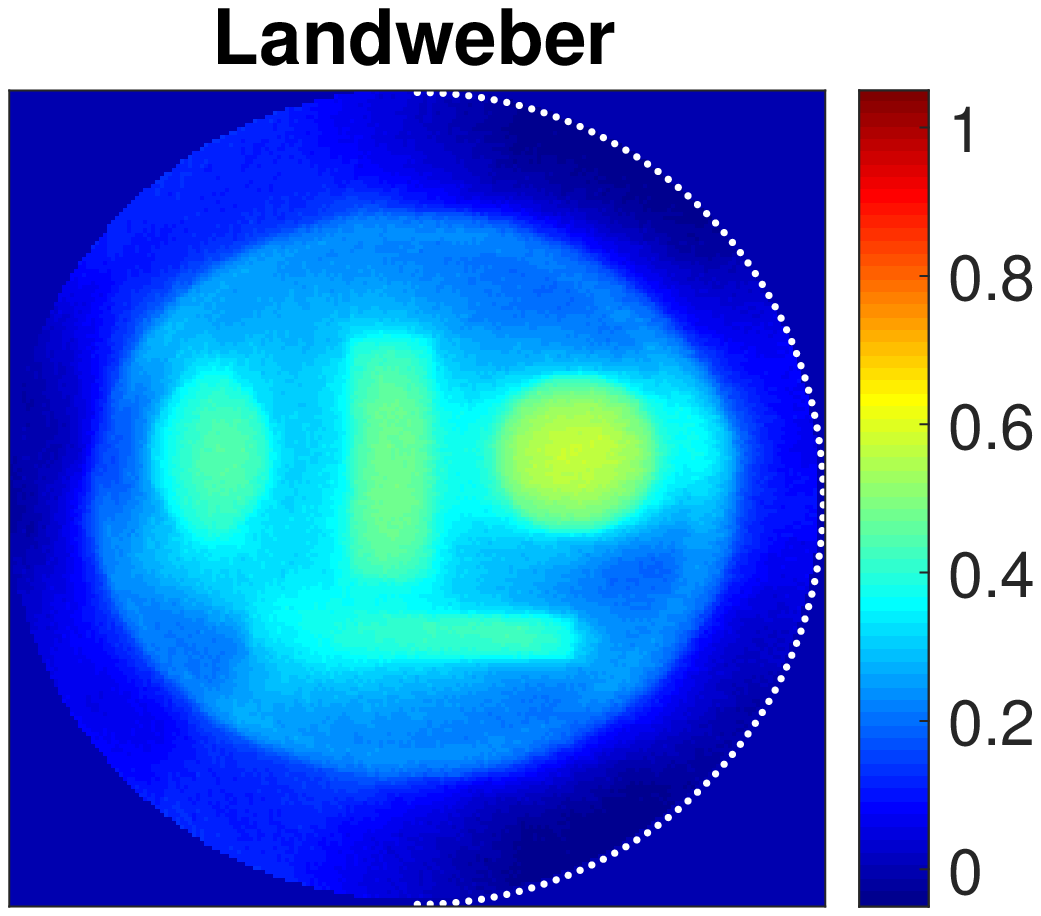}\hspace{0.15cm}
\includegraphics[width=0.28\textwidth]{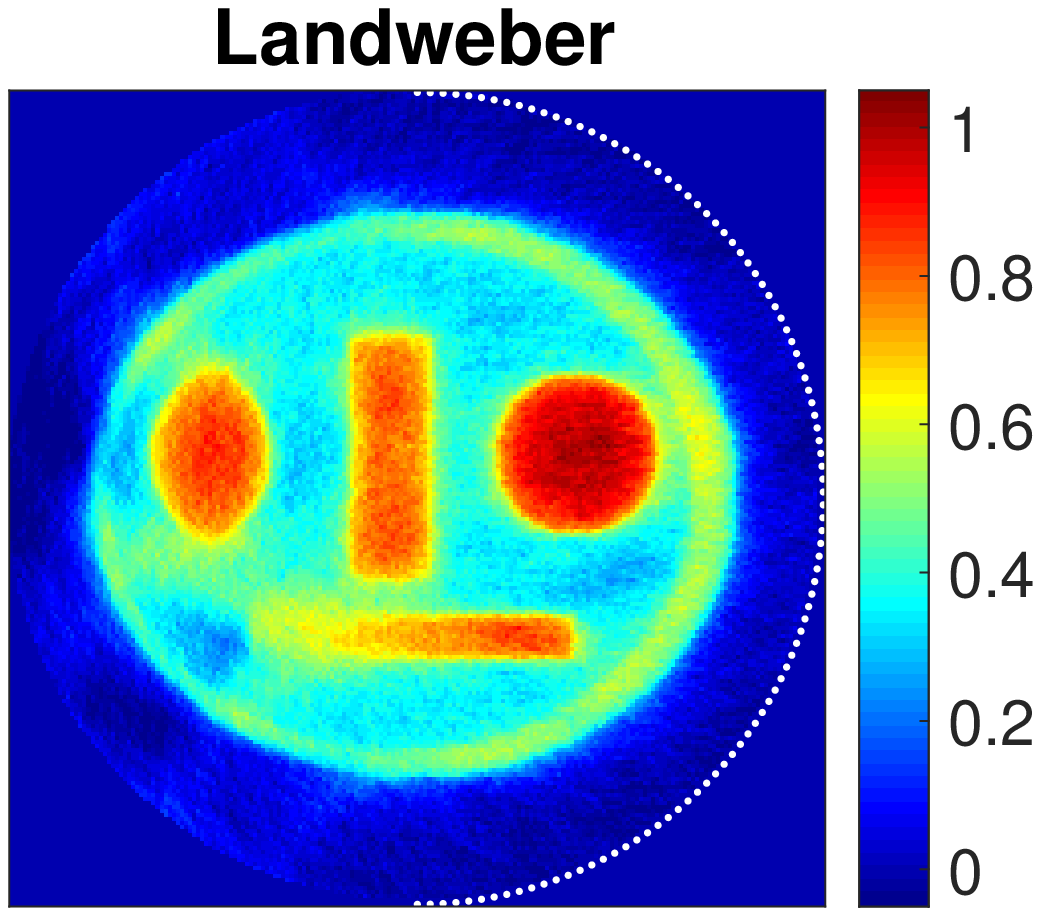}\hspace{0.15cm}
\includegraphics[width=0.28\textwidth]{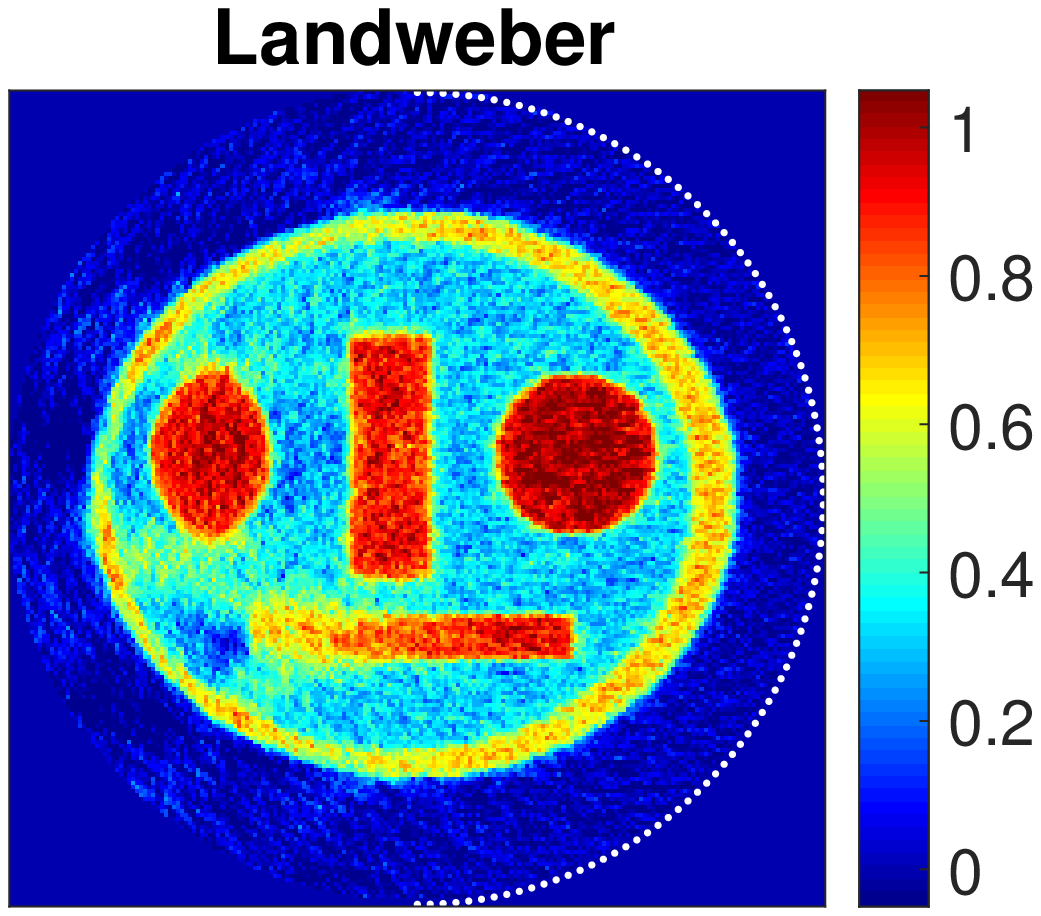} \vspace{0.15cm}\\
\includegraphics[width=0.28\textwidth]{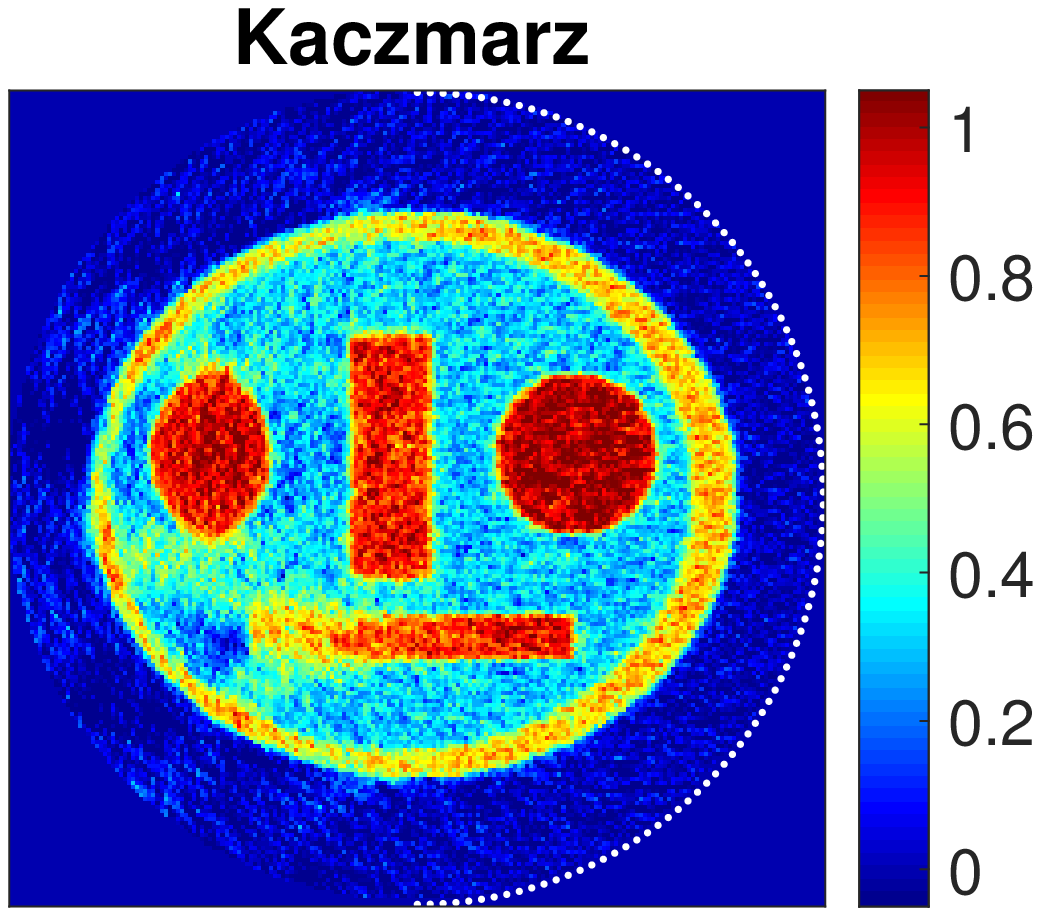}\hspace{0.15cm}
\includegraphics[width=0.28\textwidth]{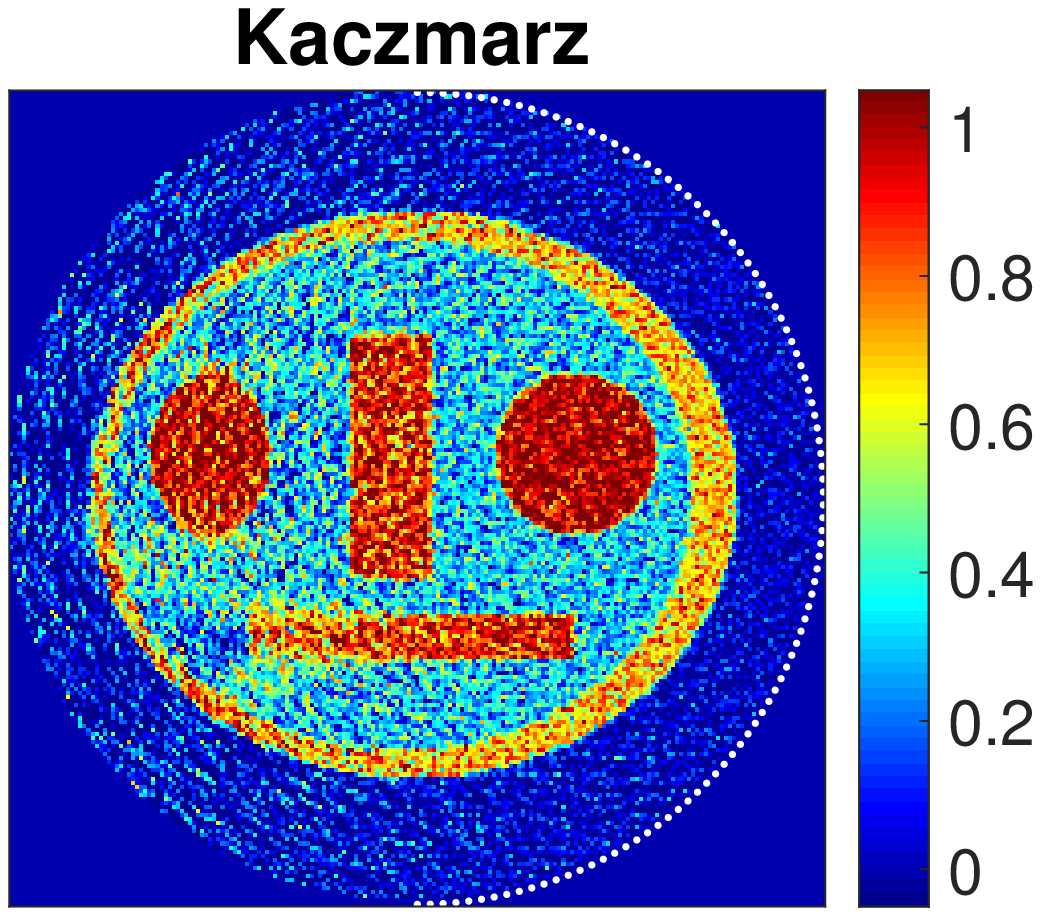}\hspace{0.15cm}
\includegraphics[width=0.28\textwidth]{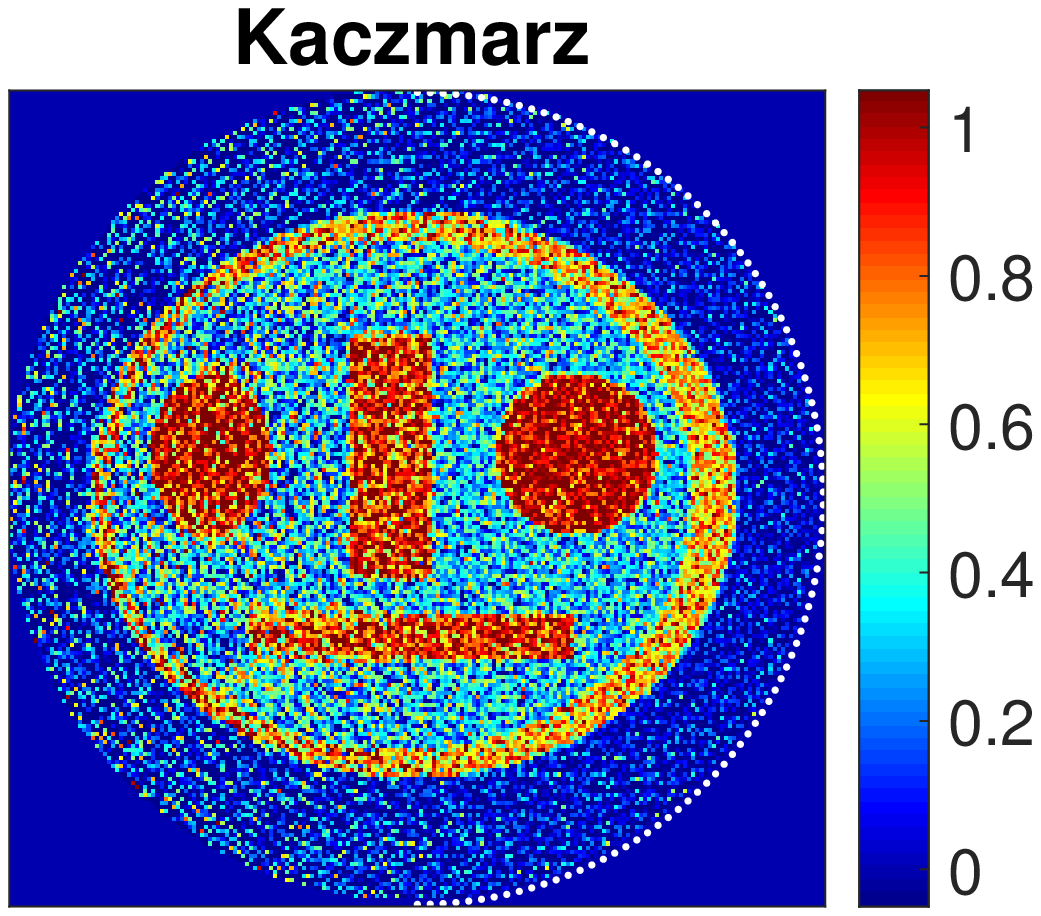} \vspace{0.15cm} \\
\includegraphics[width=0.28\textwidth]{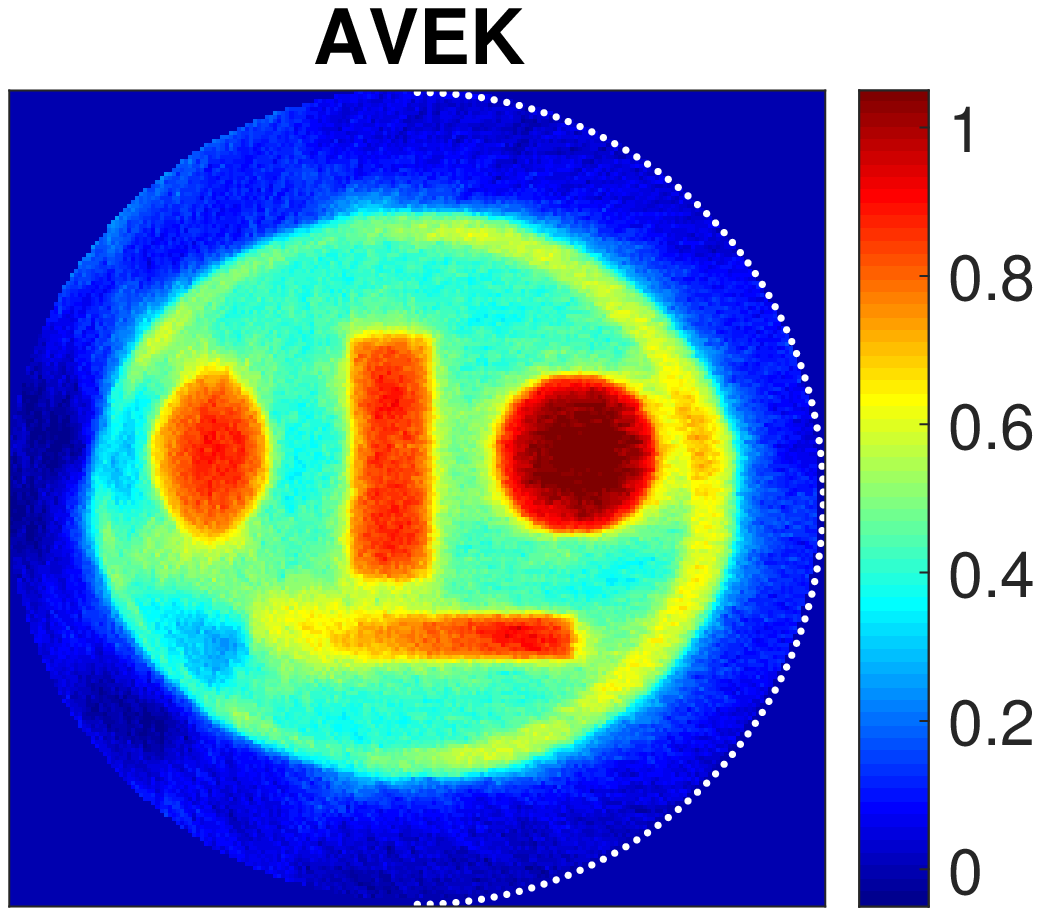}\hspace{0.15cm}
\includegraphics[width=0.28\textwidth]{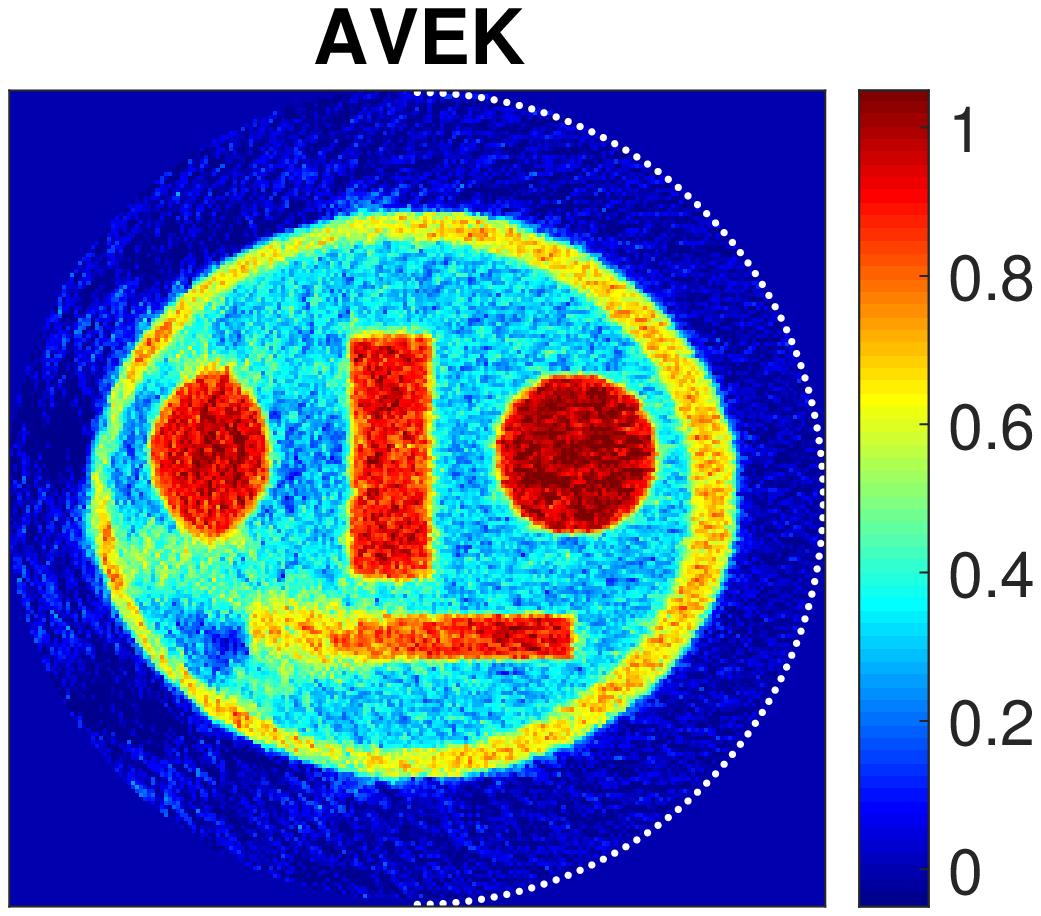}\hspace{0.15cm}
\includegraphics[width=0.28\textwidth]{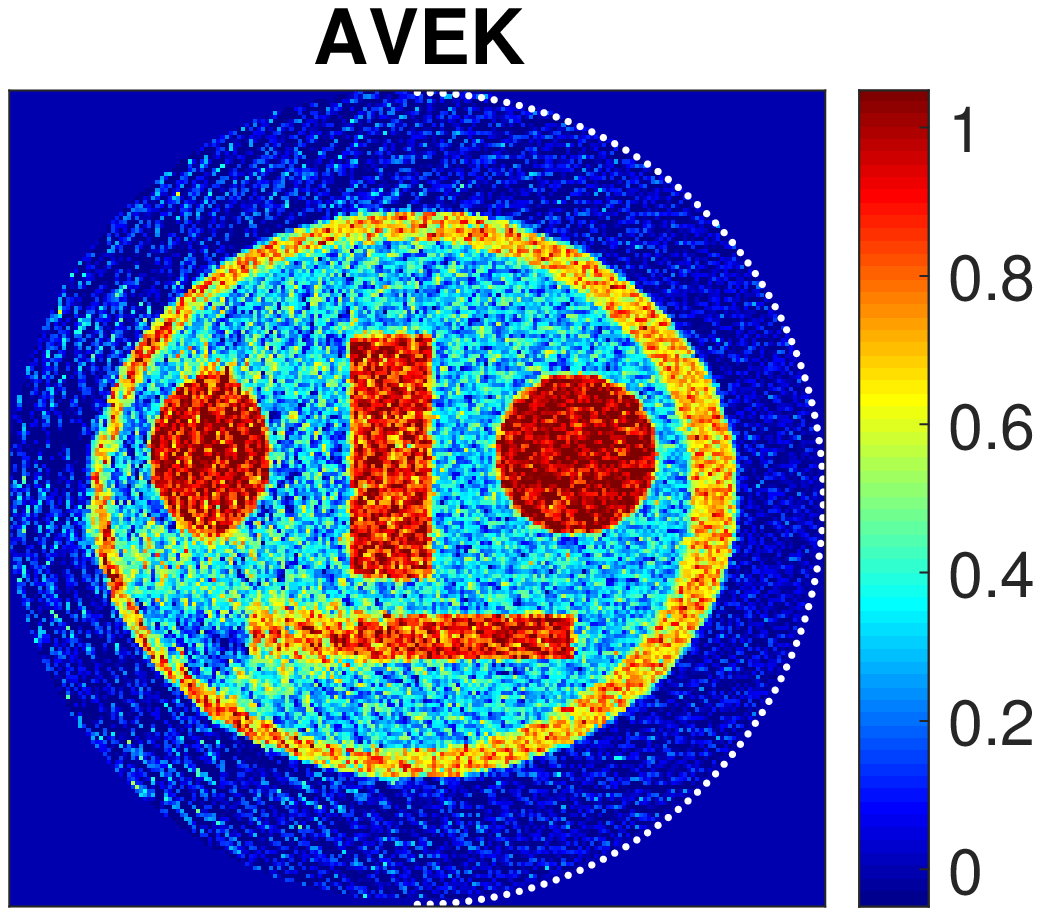}
\caption{{Reconstructions by Landweber, Kaczmarz and AVEK with proper choice of step sizes from noisy data after 2 cycles (left), 10 cycles (center) and 35 cycles (right).}}
\label{fig:noisy}
\end{figure}

\subsection{{Comparison with other methods}}

We further investigate the performance of the proposed AVEK method by comparing it with state-of-the-art accelerated versions of the Landweber and the Kaczmarz method proposed in~\cite{NAE17}; compare also \cite{dos1987parallel,nikazad2013acceleration,nikazad2016convergence}. These accelerated methods take the same forms as the basic Landweber and the Kaczmarz method, with the only difference lying in the choice of step sizes; they select step sizes at each iteration via error minimizing relaxation (EMR) strategies. More precisely, the step size for the $k$-th iterative update is chosen to minimize $\sinner{\fsignal^\delta_k  - \fsignal}{(\Mo^*\Mo)^s(\fsignal^\delta_k  - \fsignal)}$ in case of the Landweber method, and to minimize $\sinner{\fsignal^\delta_k  - \fsignal}{(\Mo_{[k]}^*\Mo_{[k]})^s(\fsignal^\delta_k  - \fsignal)}$  in case of the Kaczmarz method, for fixed $s \in \N_0$ (see~\cite{NAE17} for  details). We denote the resulting accelerated versions by Landweber-EMR and Kaczmarz-EMR, respectively. Additionally, we consider the incremental aggregated  gradient (IAG) method~\cite{blatt2007convergent}, being closely related to the AVEK method, which is defined as
\[
\fsignal_{k+1}^{\delta}  =
	 \fsignal_k^{\delta} - \frac{s_k}{n} \sum_{\ell = k-n+1}^k \Mo_{\ii{\ell}}^*
\skl{ \Mo_{\ii{\ell}}(\fsignal^\delta_\ell) - \gdata^\delta_{\ii{\ell}}} \qquad \text{ for } k \ge n.
\]
See~\eqref{eq:ig} for the definition in case of general (possibly nonlinear) problems. We consider the same setting as in Section~\ref{ss:num}. In numerical simulations, parameter $s$ is set to $0$ or $1$ for the Landweber-EMR and the Kaczmarz-EMR method; the step sizes for the AVEK method are chosen the same as earlier (i.e.~$s_{\rm AVEK} = 30$ for exact data and $s_{\rm AVEK} = 5$ for noisy data); the step size for IAG is chosen as $s_{\rm IAG} = 0.08$ for exact data and $s_{\rm IAG} = 0.06$ for noisy data, which leads to the best empirical performance.  Moreover, for all methods the equations have been randomly rearranged prior to each cycle, which empirically accelerates the convergence.

\begin{figure}[htb!]\centering
\includegraphics[width=0.4\textwidth]{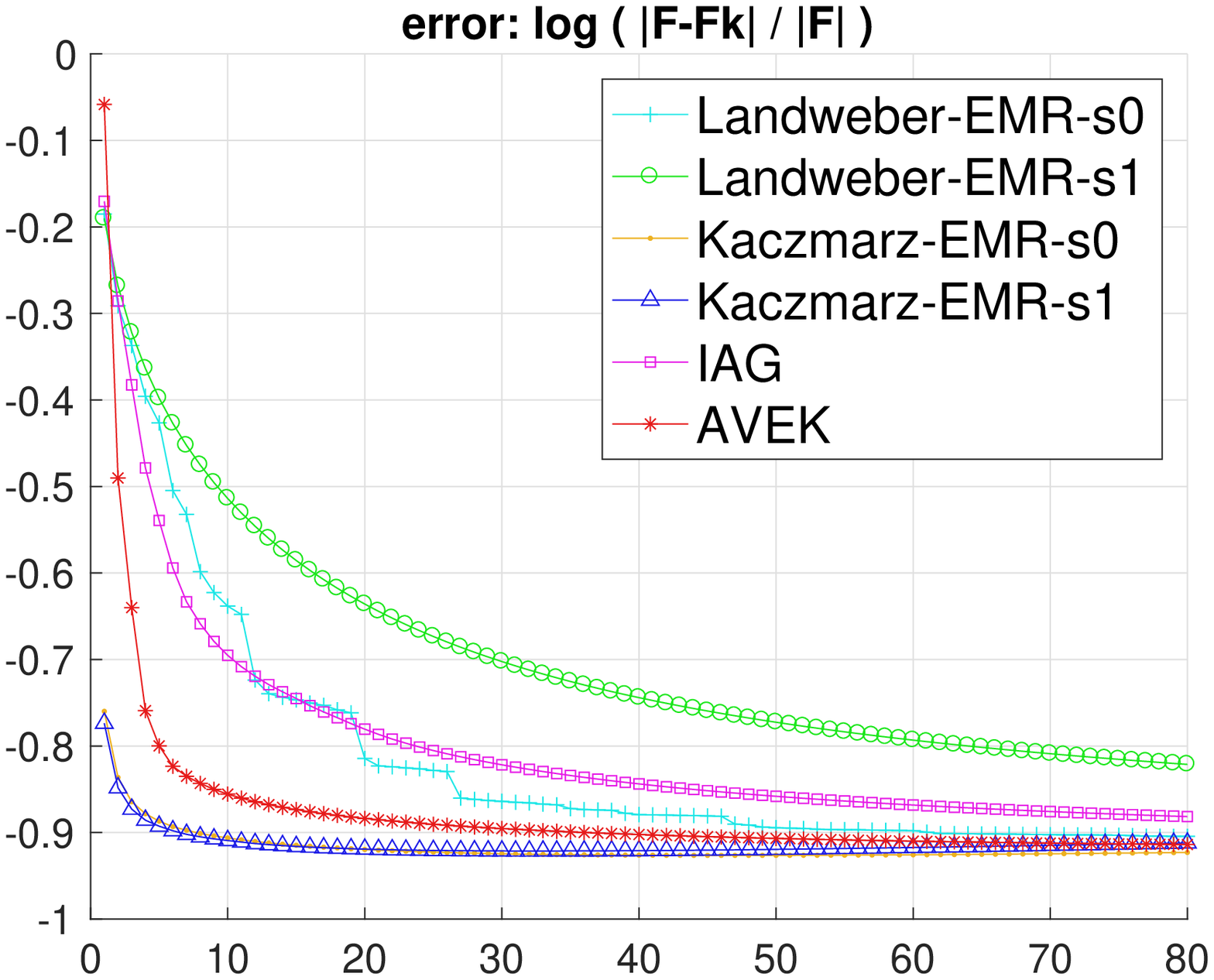}\hspace{0.8cm}
\includegraphics[width=0.4\textwidth]{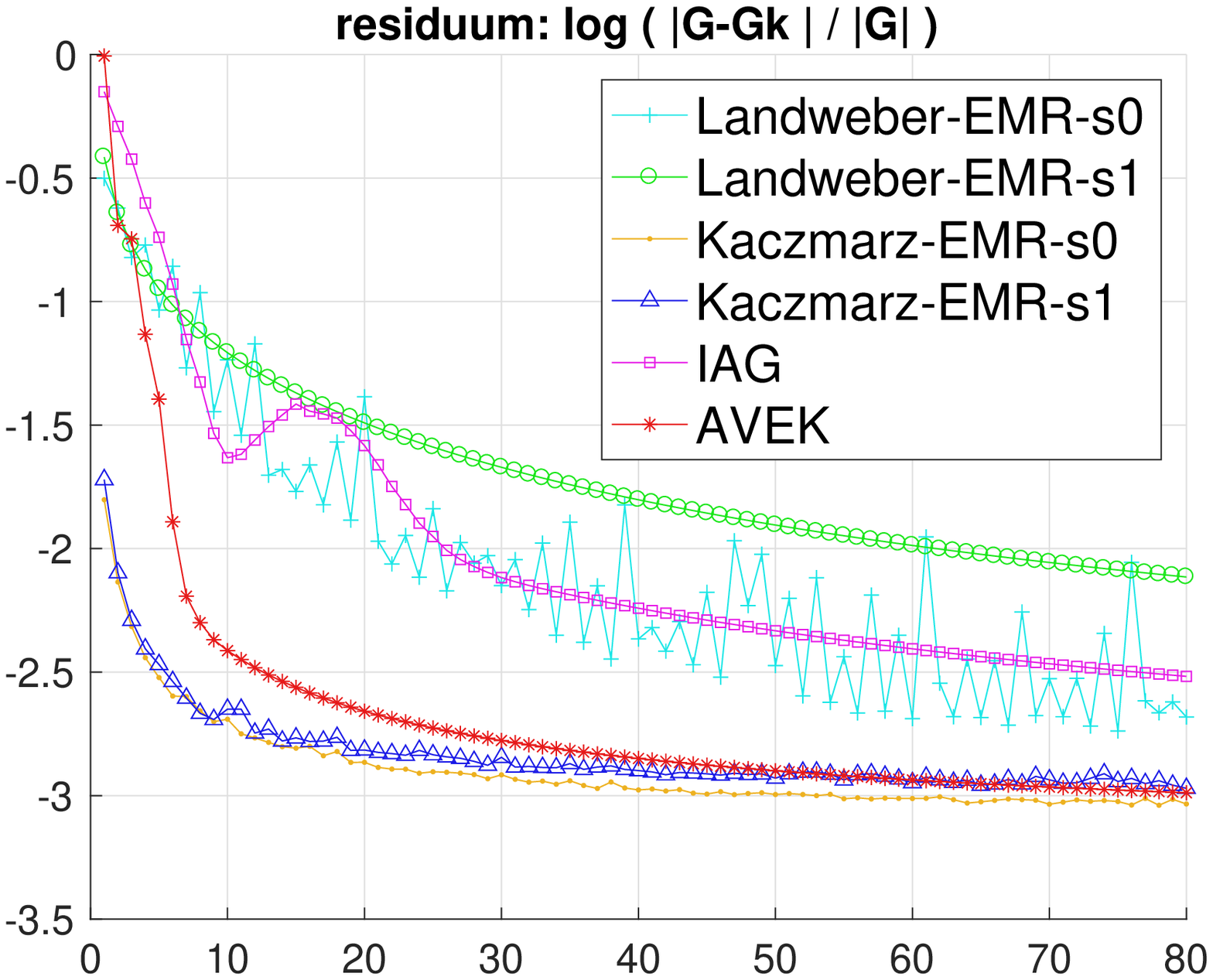}
\caption{{Residuum and relative reconstruction error (after taking logarithm to basis 10) of Landweber-EMR ($s = 0$ or $1$), Kaczmarz-EMR ($s = 0$ or $1$), IAG and AVEK for exact data during the first 80 cycles.}}
\label{fig:ccExact}
\end{figure}

\begin{figure}[htb!]\centering
\includegraphics[width=0.28\textwidth]{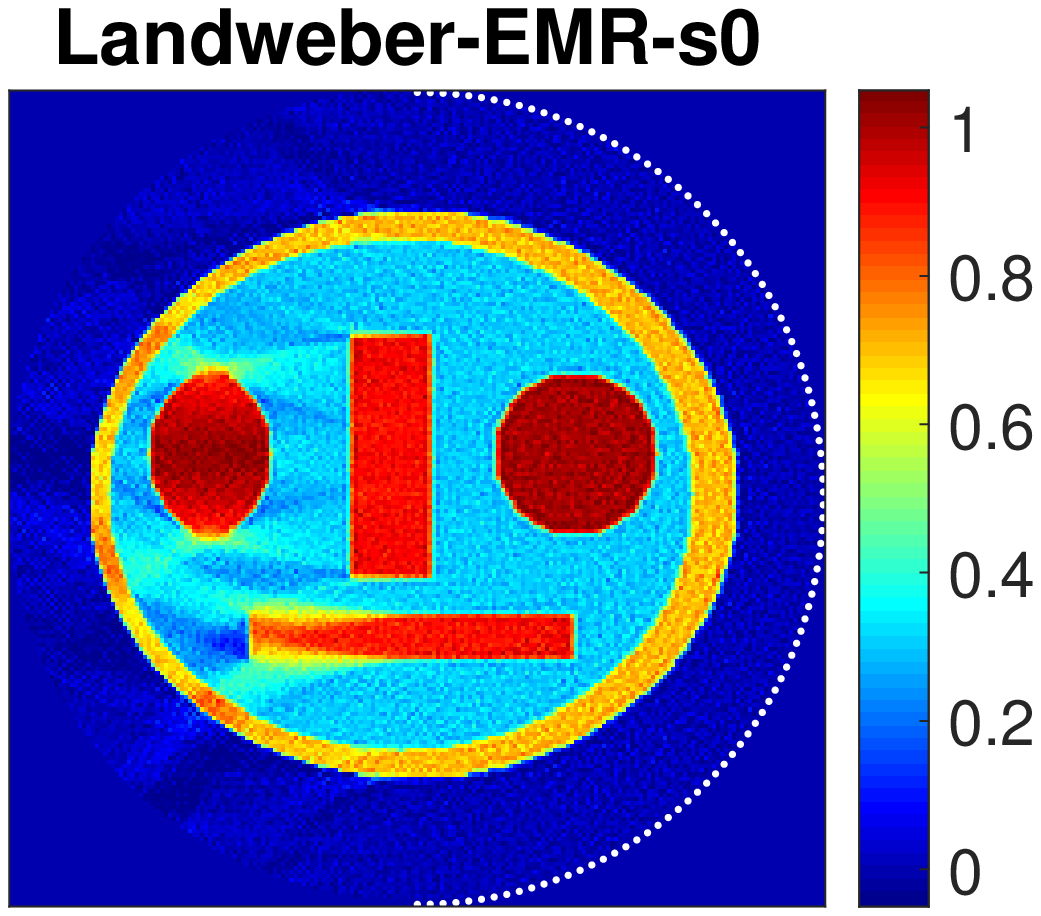}\hspace{0.15cm}
\includegraphics[width=0.28\textwidth]{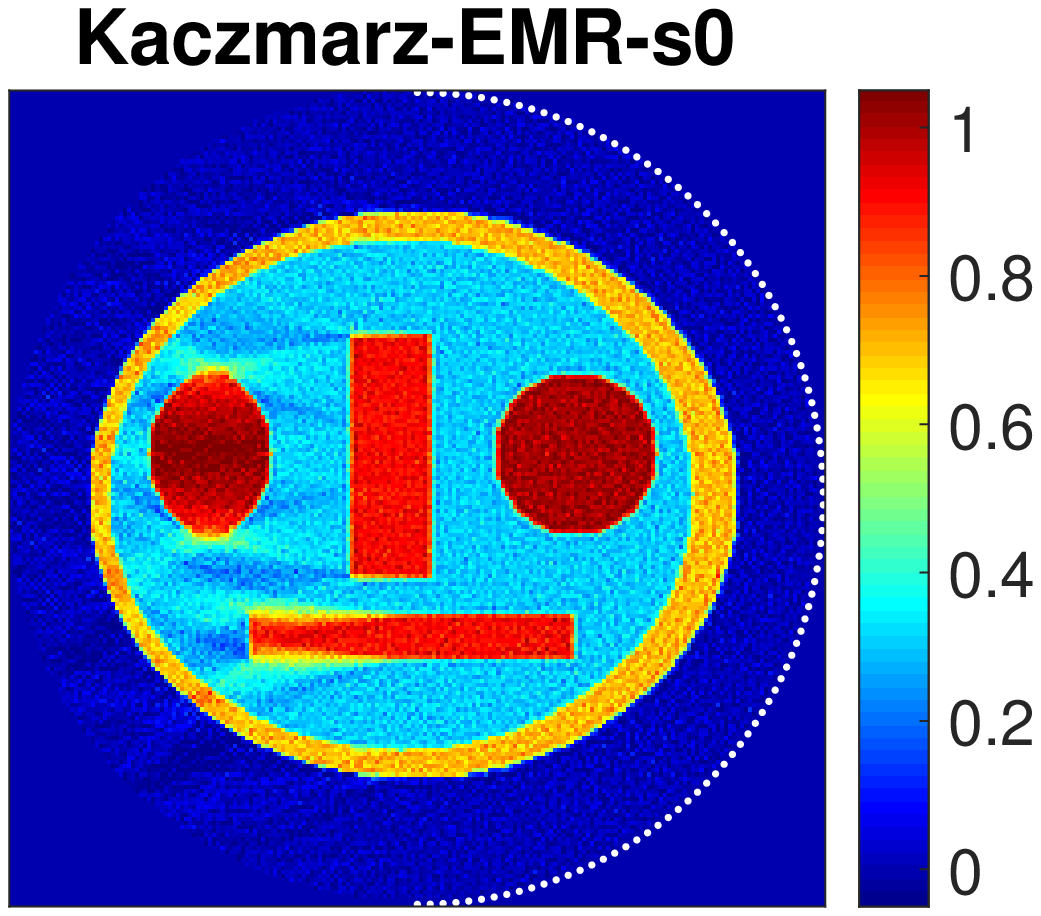}\hspace{0.15cm}
\includegraphics[width=0.28\textwidth]{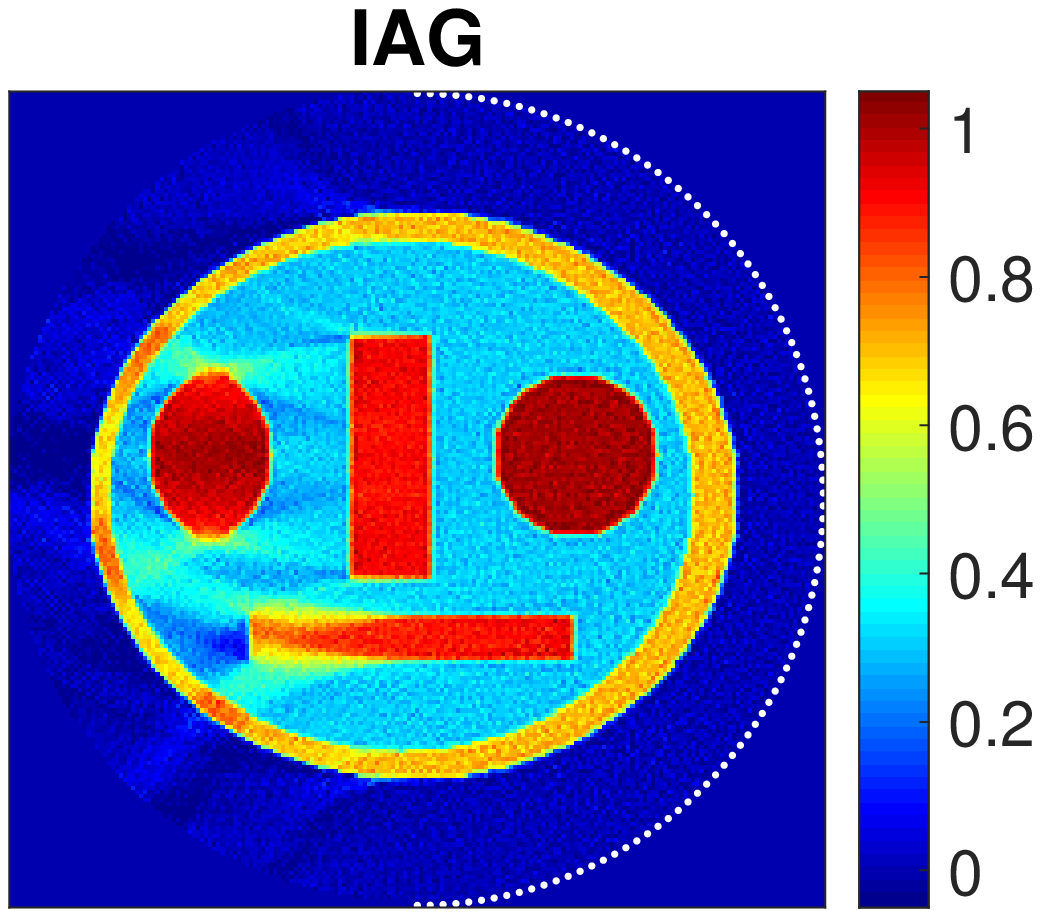} \vspace{0.15cm}\\
\includegraphics[width=0.28\textwidth]{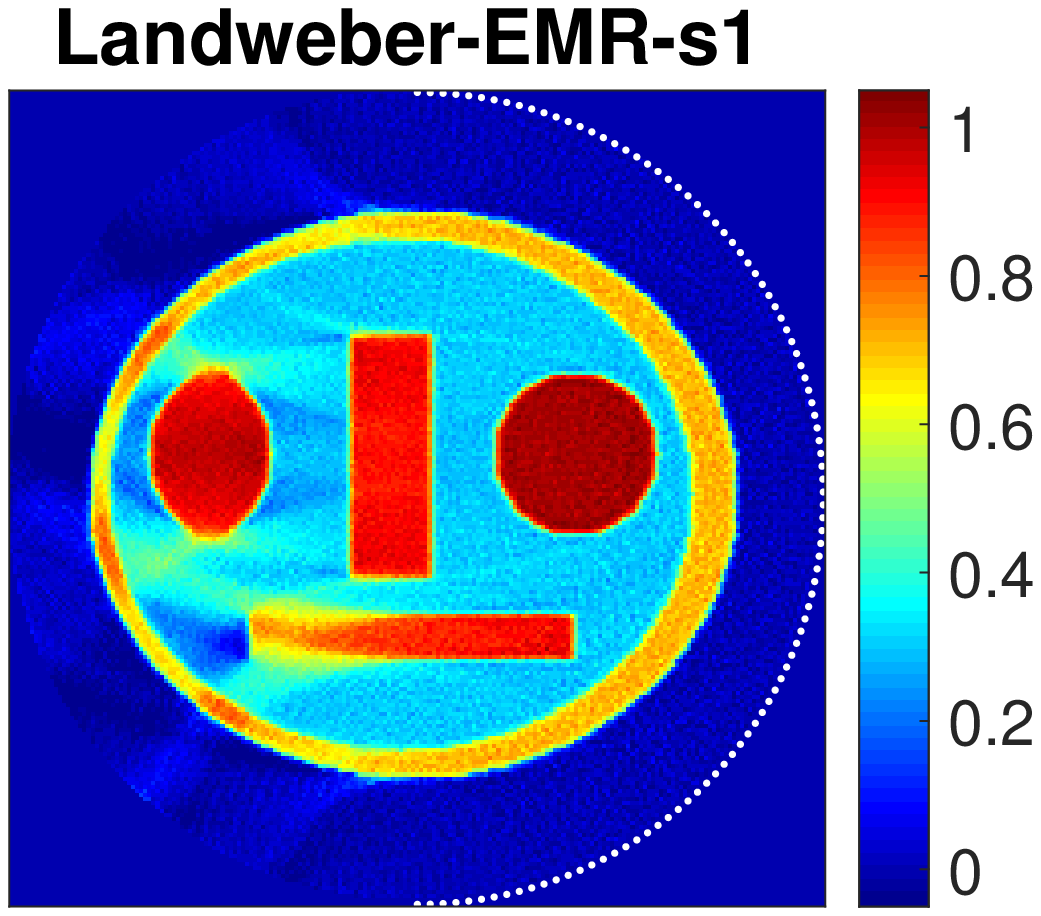}\hspace{0.15cm}
\includegraphics[width=0.28\textwidth]{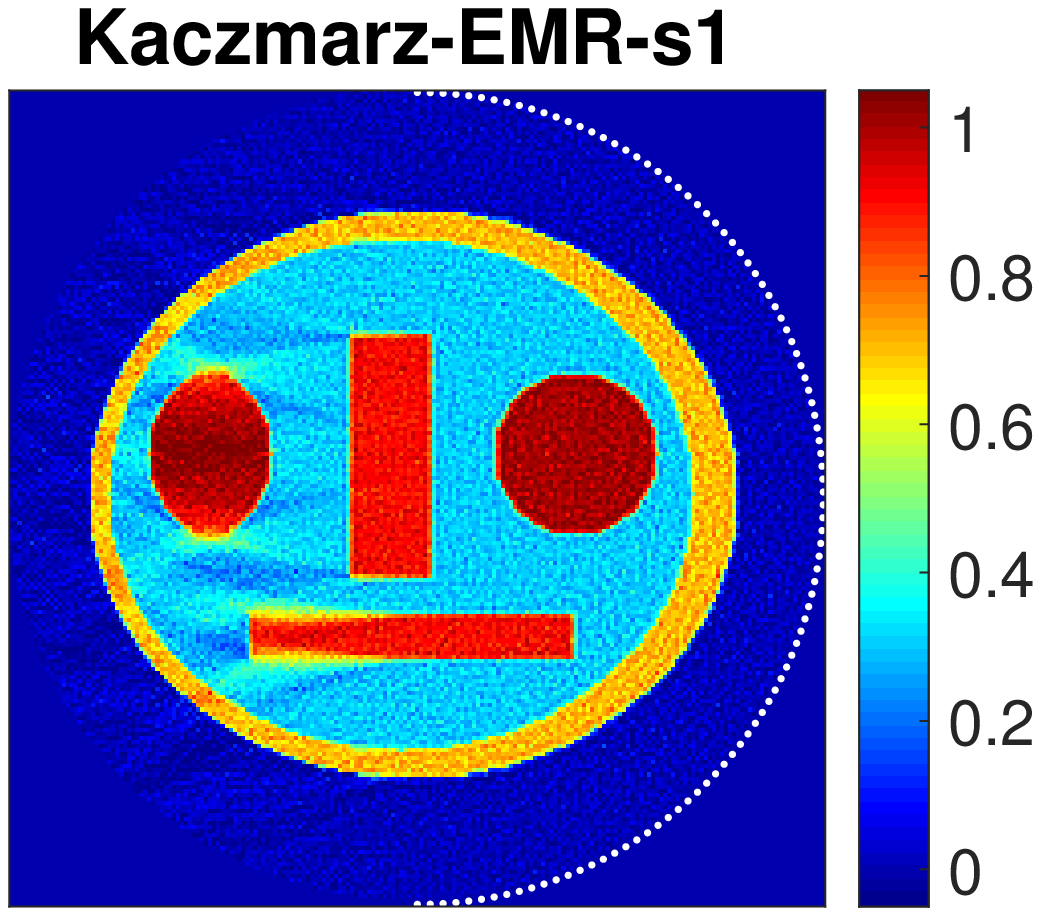}\hspace{0.15cm}
\includegraphics[width=0.28\textwidth]{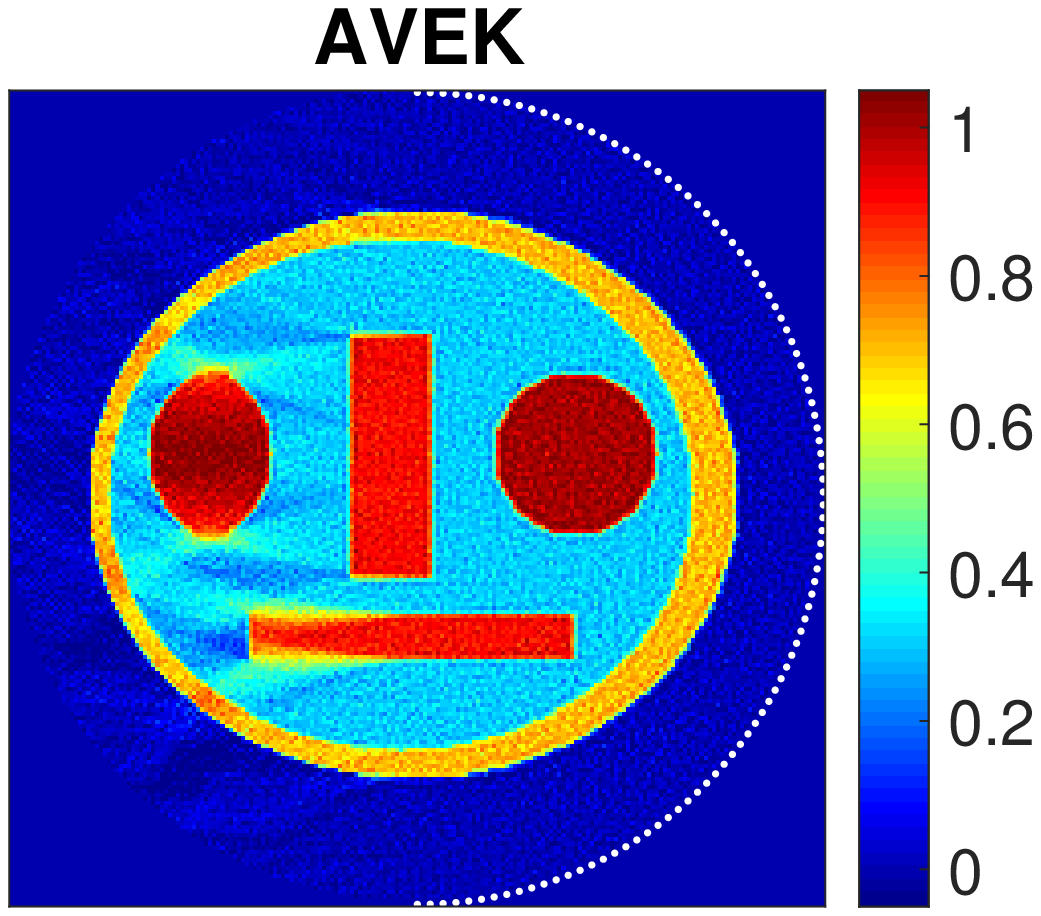} \vspace{0.15cm}
\caption{{Reconstructions by Landweber-EMR ($s = 0$ or $1$), Kaczmarz-EMR ($s = 0$ and $1$), IAG and AVEK from exact data after 80 cycles.}}
\label{fig:rcExact}
\end{figure}

For exact data the comparison of convergence behavior  is illustrated in Figure~\ref{fig:ccExact}. It shows that the two Kaczmarz-EMR methods are the fastest, closely followed by the AVEK, then the IAG and the Landweber-EMR ($s = 1$), while the Landweber-EMR ($s = 0$) is the slowest. Both the AVEK and the Kaczmarz-EMR methods obtain the smallest relative reconstruction errors and the smallest residuals among all methods. By comparing with Figure~\ref{fig:vars}, one notes that the EMR strategies indeed accelerate the original Landweber and Kaczmarz methods for the circular Radon transform in terms of convergence rates. Further, notice that  AVEK  converges faster than IAG. Figure~\ref{fig:rcExact} gives a visual inspection of the convergence behavior for all methods.

\begin{figure}[htb!]\centering
\includegraphics[width=0.4\textwidth]{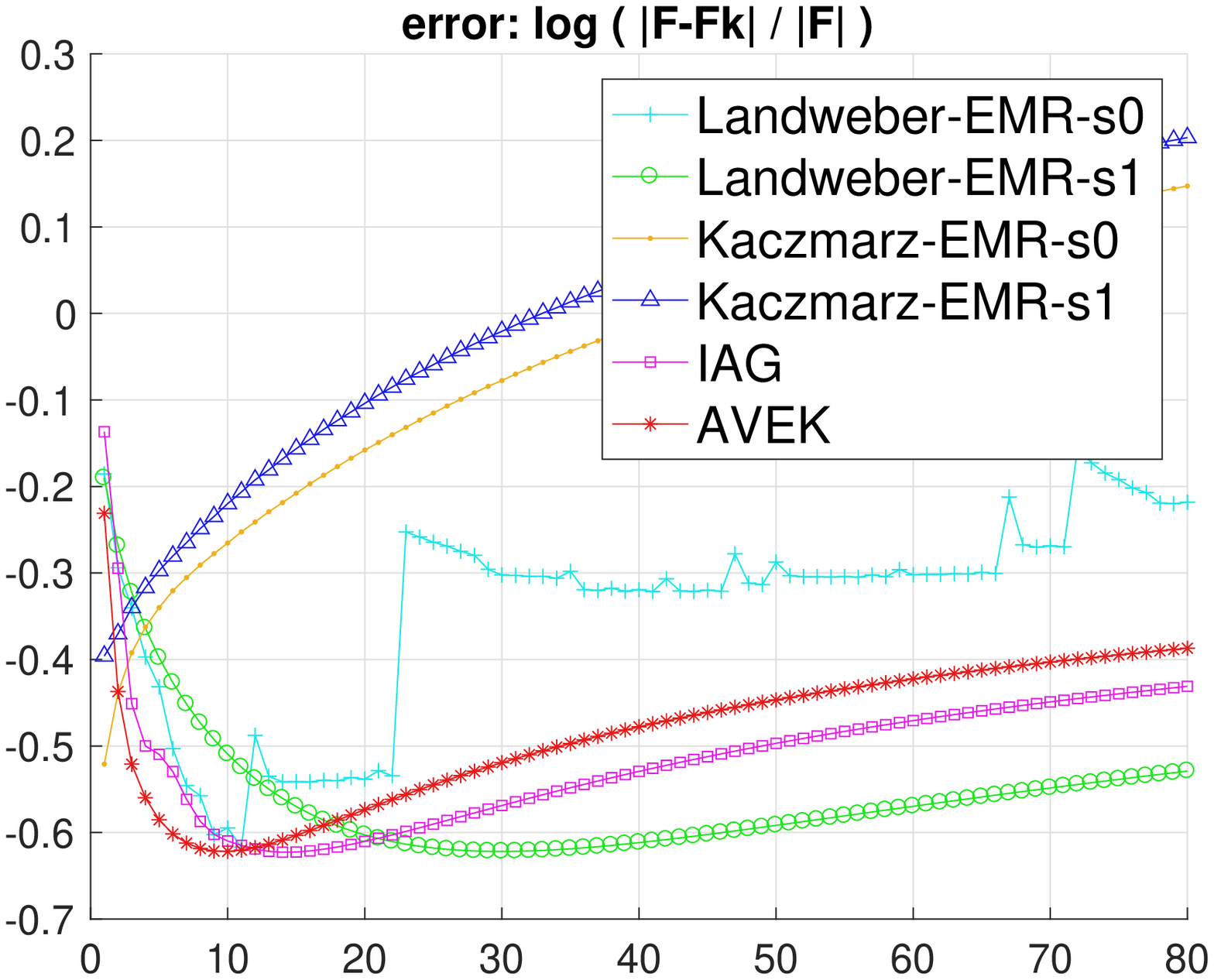}\hspace{0.8cm}
\includegraphics[width=0.4\textwidth]{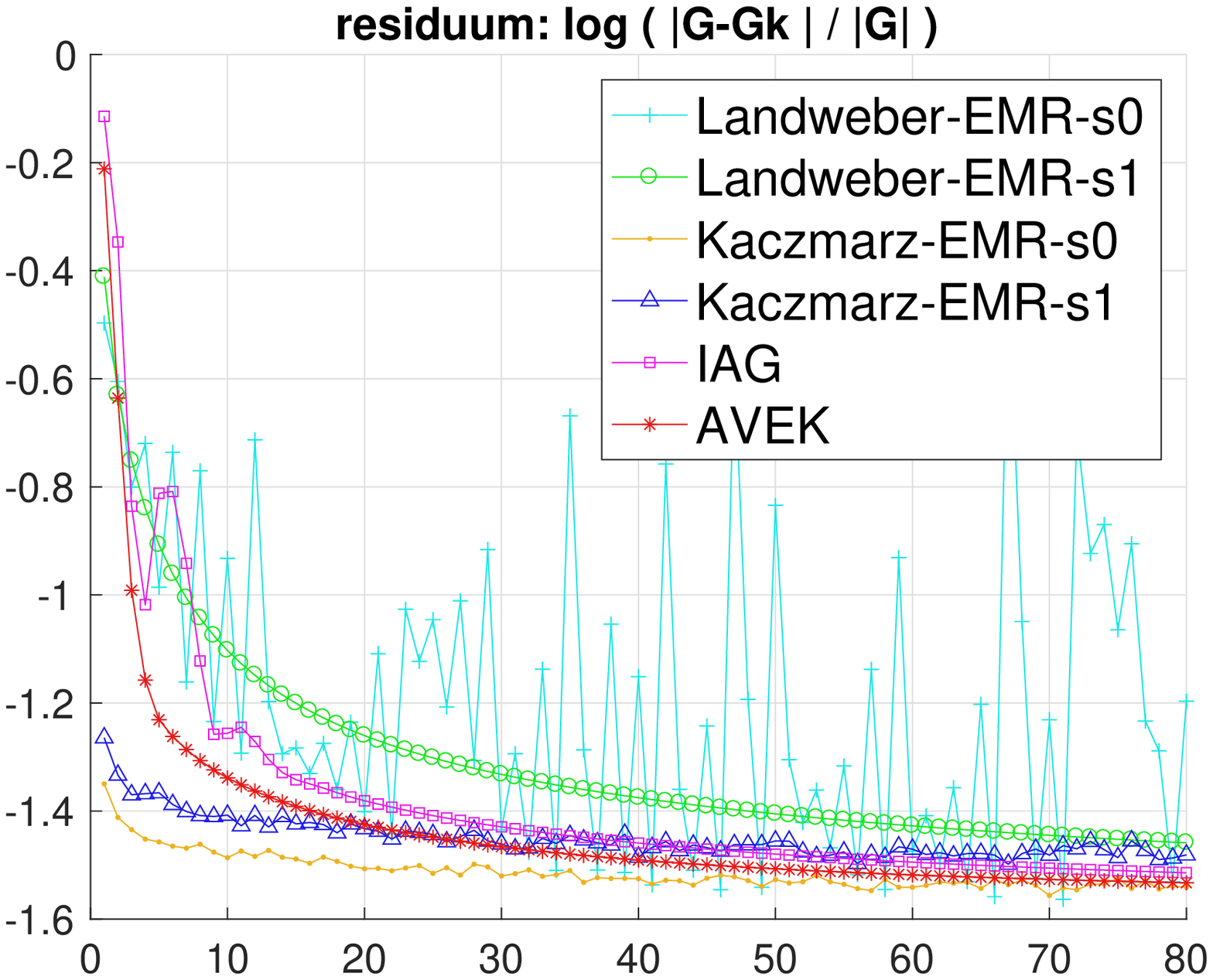}
\caption{{Residuum and relative reconstruction error (after taking logarithm to basis 10) of Landweber-EMR ($s = 0$ or $1$), Kaczmarz-EMR ($s = 0$ or $1$), IAG and AVEK for noisy data during the first 80 cycles.}}
\label{fig:ccNoisy}
\end{figure}

\begin{figure}[htb!]\centering
\includegraphics[width=0.28\textwidth]{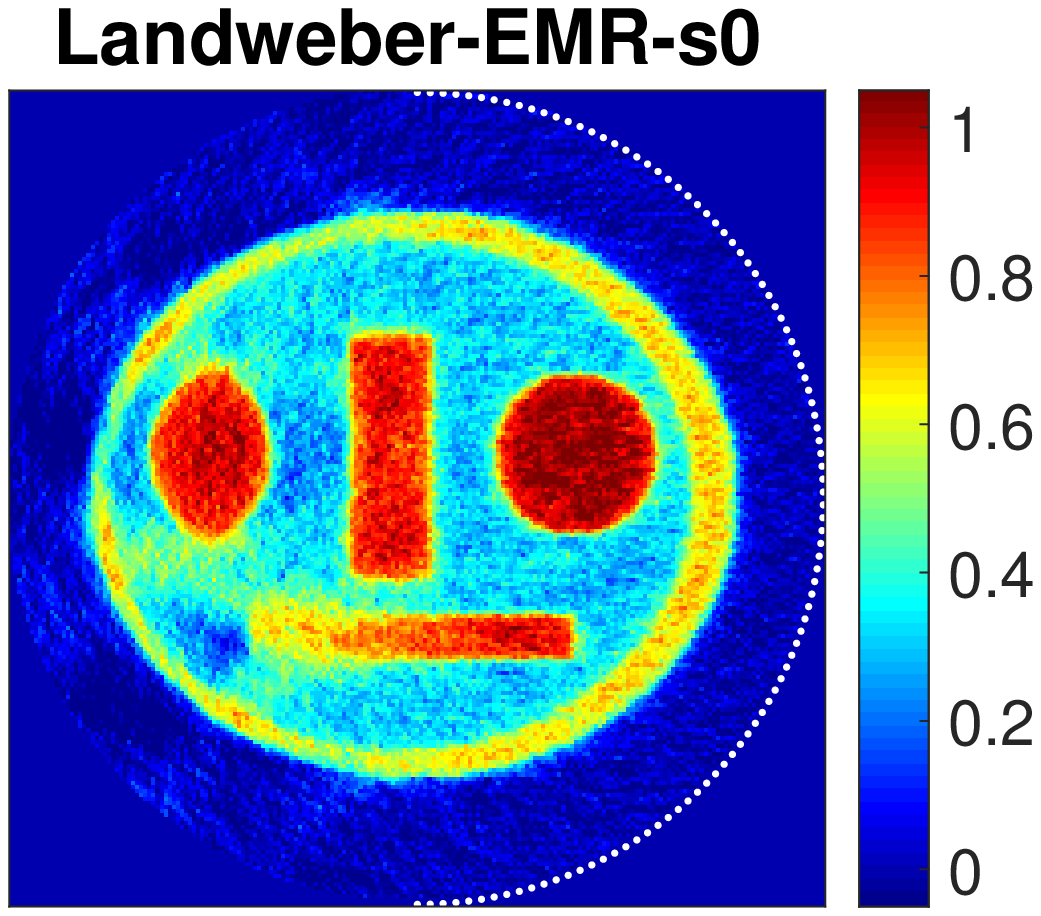}\hspace{0.15cm}
\includegraphics[width=0.28\textwidth]{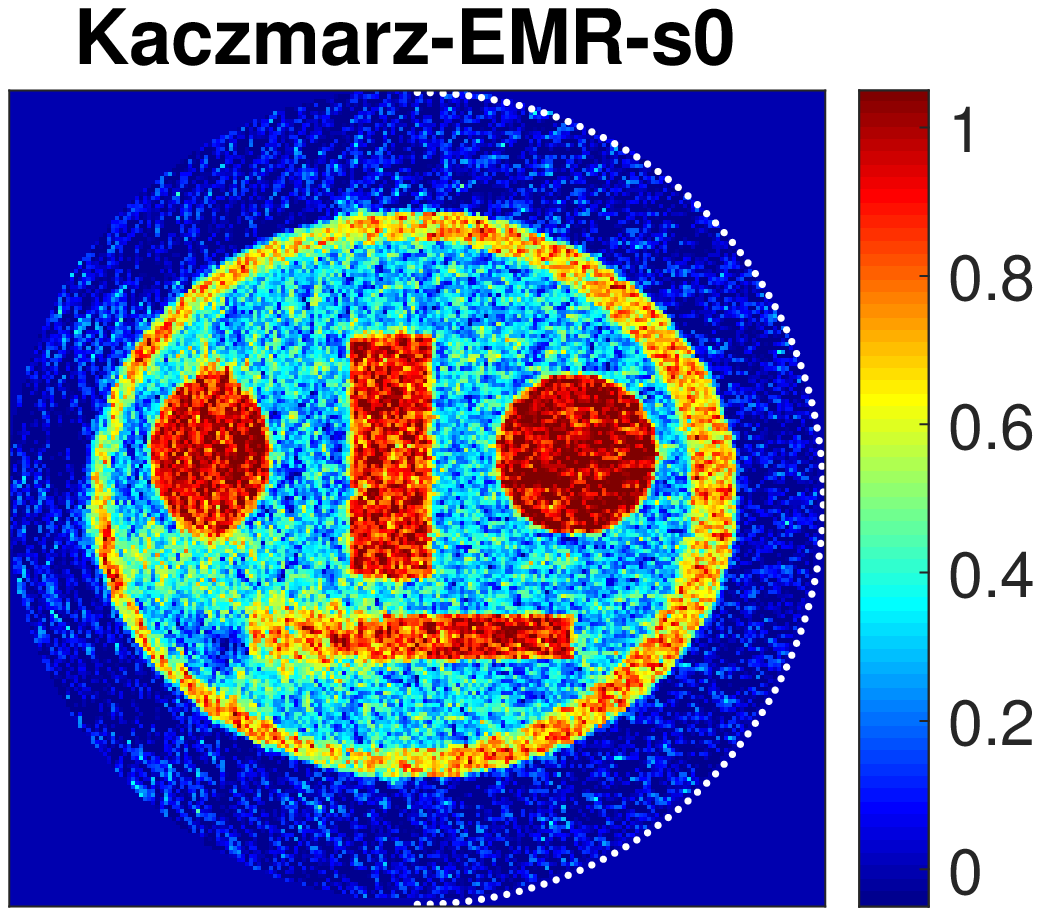}\hspace{0.15cm}
\includegraphics[width=0.28\textwidth]{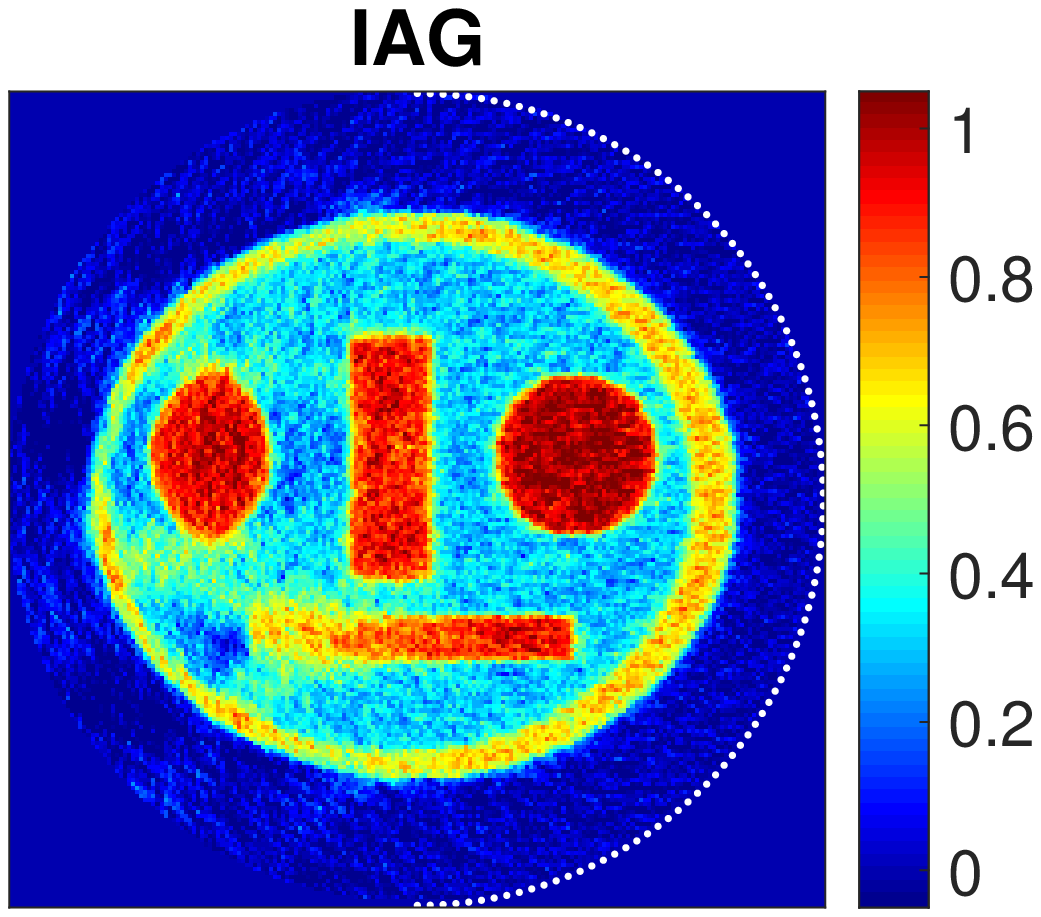} \vspace{0.15cm}\\
\includegraphics[width=0.28\textwidth]{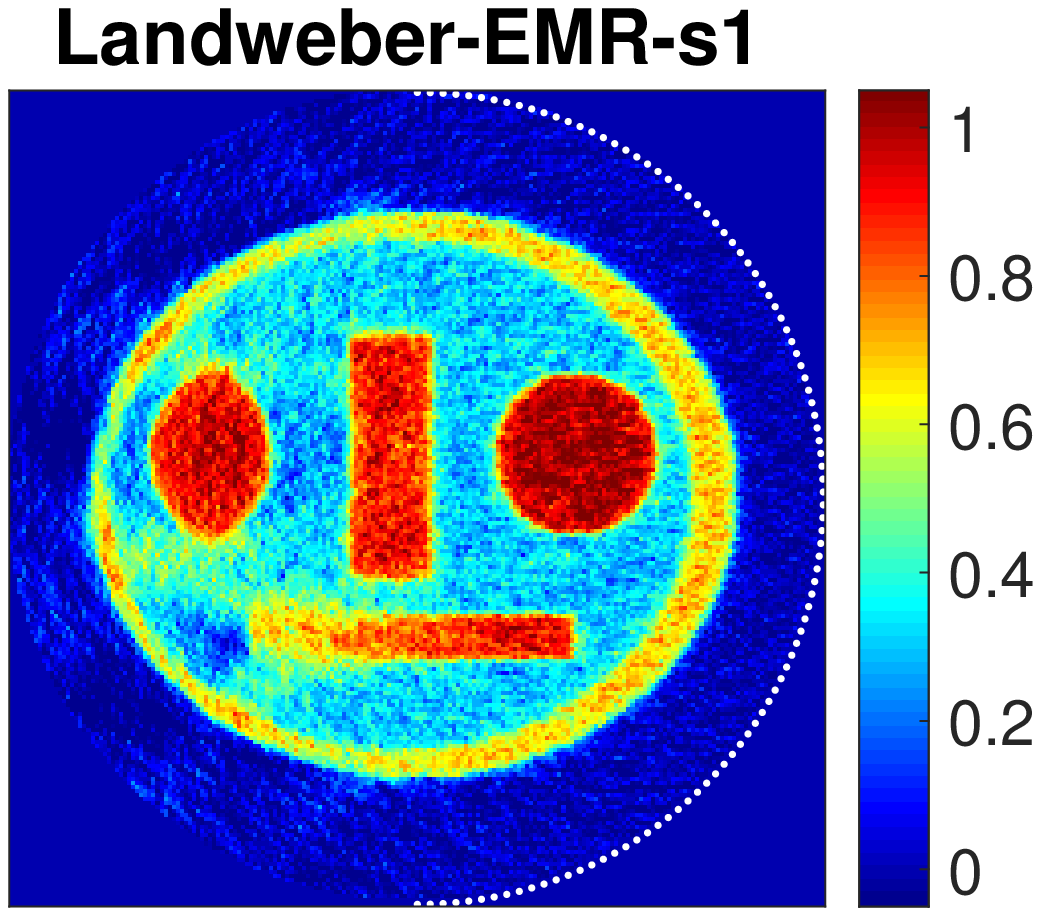}\hspace{0.15cm}
\includegraphics[width=0.28\textwidth]{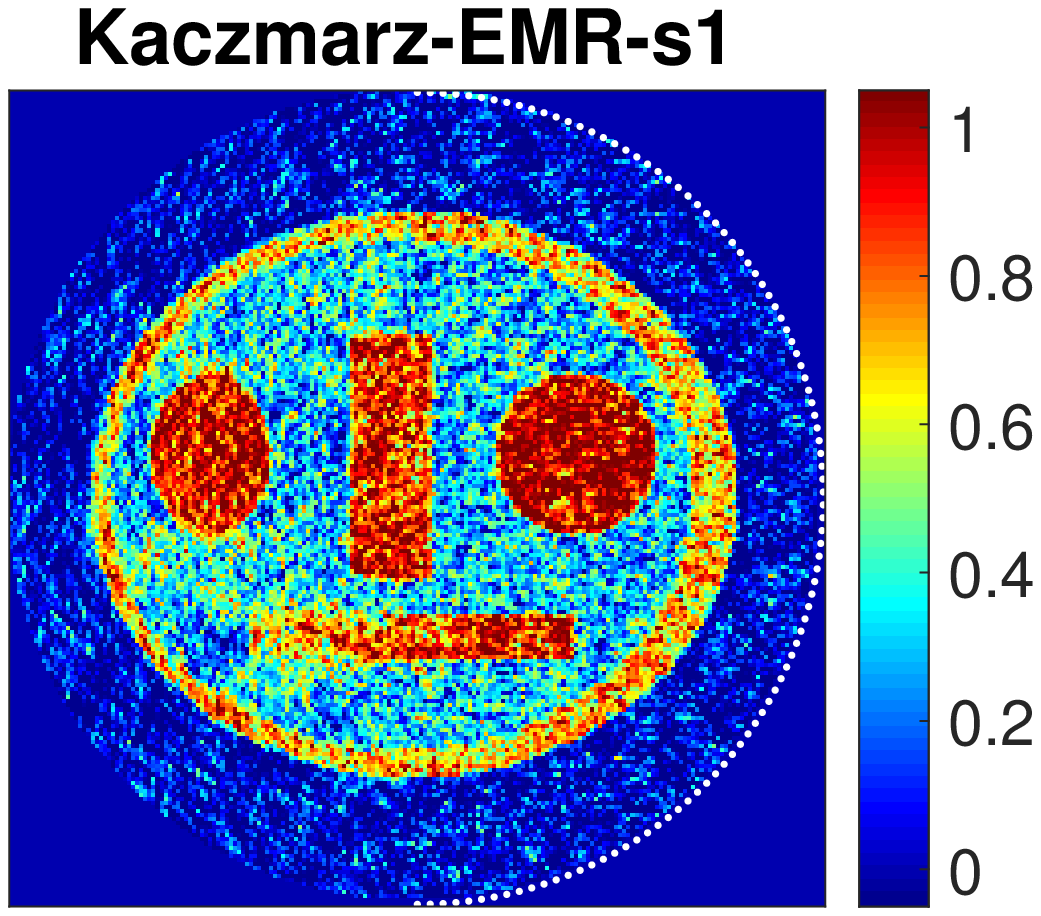}\hspace{0.15cm}
\includegraphics[width=0.28\textwidth]{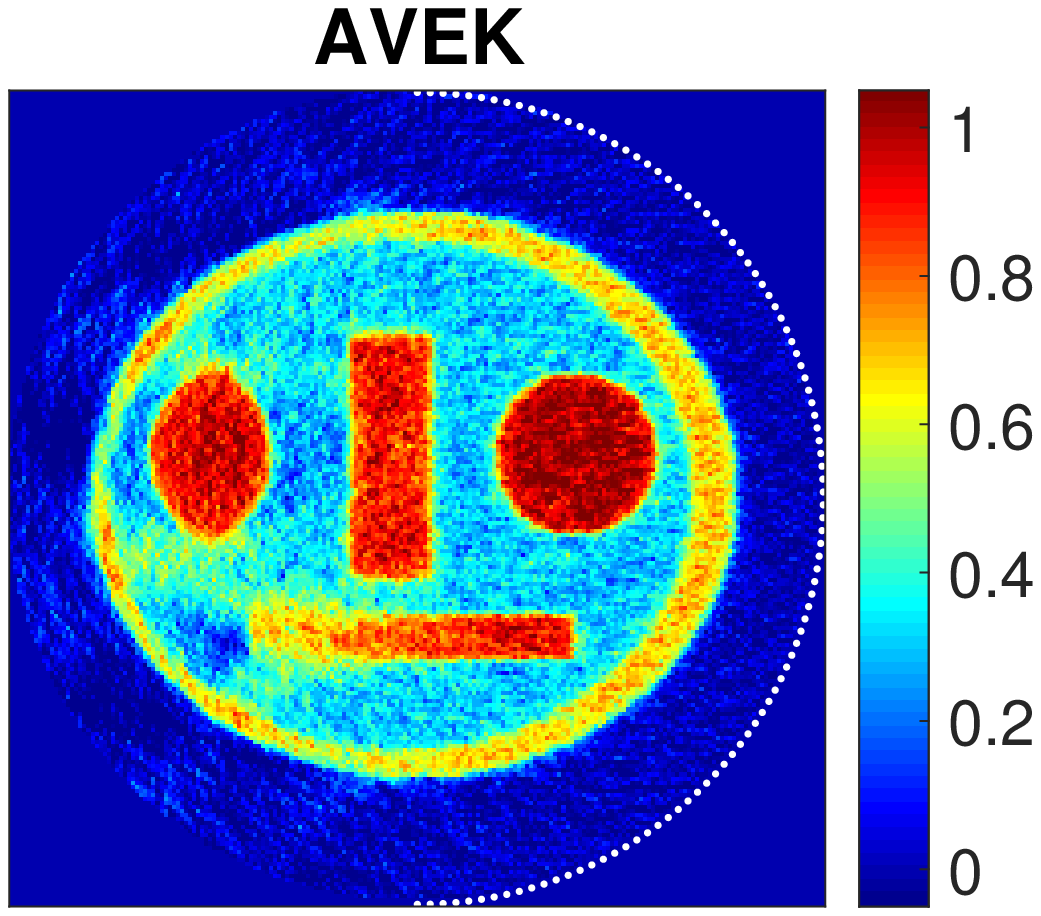} \vspace{0.15cm}
\caption{{Reconstructions by Landweber-EMR ($s = 0$ or $1$), Kaczmarz-EMR ($s = 0$ or $1$), IAG and AVEK from noisy data at the cycles with minimal $L^2$-reconstruction errors.}}
\label{fig:rcNoisy}
\end{figure}

The comparison for noisy data is summarized in Figure~\ref{fig:ccNoisy}. In terms of relative reconstruction errors (which for inverse problems are more important than residuals), the AVEK performs the best, the IAG and the Landweber-EMR ($s = 0$) rank second, followed by the Landweber-EMR ($s = 1$). Unlike in the exact data case, the Kaczmarz-EMR methods are less satisfactory. This indicates that the convergence speed should not be the only concern for iterative methods if they are applied as regularization methods (cf.~also Figure~\ref{fig:varsn}). The minimal relative reconstruction errors are achieved after 10 cycles for the AVEK, after 11 iterations for the Landweber-EMR ($s = 0$), after 14 cycles for the IAG,  after 30 iterations for the  Landweber-EMR ($s = 1$), and after 1 cycle for the Kaczmarz-EMR methods. The reconstructions with minimal reconstruction errors for all methods are shown in Figure~\ref{fig:rcNoisy}.

As we have already noticed, developing appropriate step size strategies can significantly improve the results (see also~\cite{censor1983strong}). Here we have simply used constant and conservative step sizes for the AVEK method. Further, adjusting the skipping  parameters $\rp_k$ can potentially improve and stabilize the AVEK method. A precise comparison of the methods using parameter fine-tuning and implementing adaptive and data-driven choices deserves further investigation; this, however, is beyond the scope of this paper.

\section{Conclusion and outlook}
\label{sec:conclusion}

In this paper we introduced the averaged Kaczmarz (AVEK) method as a paradigm of a new iterative
regularization method. AVEK can be seen as a  hybrid    between   Landweber's and Kaczmarz's method
for solving inverse problems  given  as systems  of equations $\To_i (\signal) =  \data_i$.
As the Kaczmarz method, AVEK requires only solving one forward and one adjoint problem per iteration.
As the Landweber method, it uses information from all equations per update which can have a stabilizing effect.
As main theoretical results, we have shown that the AVEK method converges   weakly in the case of exact data (see Theorem~\ref{thm:exact}),
and presented convergence results for noisy  data (see Theorem~\ref{thm:noisy}).
Note that the convergence as $\delta \to 0$  in Theorem~\ref{thm:noisy}~\ref{noise_b}  assumes
strong convergence in the exact  data case.  It is an open problem if the same conclusion holds under its
weak convergence only.  Another open problem is the strong convergence for exact data  in the general case.
We  conjecture both issues to hold true. {Finally, it is of also interest to investigate the AVEK method \eqref{eq:avek1}-\eqref{eq:avek3} for general convex combinations with weights $\omega_i$ instead of equal weights $\omega_i = 1/n$.}

In Section~\ref{sec:num}, we presented numerical results for the  AVEK method applied to the limited
view problem  for the  circular  Radon, which is relevant for  photoacoustic tomography. For comparison
purpose we also  applied the Landweber and the Kaczmarz method to the same problem.
In the exact data case, the observed convergence speed (number of cycles versus reconstruction error)
of the AVEK turned out to be somewhere  between the Kaczmarz (fastest) and the Landweber method
(slowest).  A similar behavior has been observed in the noisy data case. In this case,
the  minimal reconstruction error for the AVEK is slightly smaller that the one of the Kaczmarz method
and equal to the  Landweber method. The required number of iterations however is less than the one of the
Landweber method. These  initial results are encouraging   and show that the AVEK is a useful iterative method  for tomographic  image reconstruction. Detailed  studies are required in future work on the optimal  selection of parameter
such as the step sizes or the number of partitions.  {The increased stability of AVEK in terms of step sizes is worthy of further theoretical studies.} {Additionally, application of AVEK for non-linear inverse problems is another possible line of future research.}

We see AVEK as the basic member of a new class  of iterative reconstruction method. It shares some
similarities with the incremental gradient method  proposed in the seminal  work~\cite{blatt2007convergent}
(studied for well-posed problems in finite dimensions). Applied to~\eqref{eq:ip}, the  incremental gradient method
reads
\begin{equation} \label{eq:ig}
    \forall k \geq  n \colon \quad
    \signal_{k+1}  = \signal_k  - \frac{s_k}{n} \sum_{\ell = k-n+1}^k    \To_{\ii{\ell}}'(\signal_\ell)^*
    \skl{ \To_{\ii{\ell}}(\signal_\ell) - \data_{\ii{\ell}}} \,.
\end{equation}
Instead of an average  over individual  auxiliary updates, the incremental gradient method
uses an average over the individual  gradients. Studying and analyzing the  incremental gradient method
for inverse problems is an interesting open issue. The incremental gradient method has been generalized
in various directions.  This includes proximal incremental  gradient methods~\cite{bertsekas2011incremental} or
the  averaged stochastic gradient method of~\cite{schmidt2017minimizing}.
Similar extensions for the AVEK (for ill-posed as well as well-posed problems) are interesting lines of
future research.

\section*{Acknowledgment}
H.L. acknowledges support through the National Nature Science Foundation of China 61571008.

\appendix

\section{Deconvolution of sequences and proof of Lemma~\ref{lem:diff}}\label{app:deconv}

The  main aim of this  appendix is to prove Lemma~\ref{lem:diff},
concerning the convergence of the difference of two consecutive iterates
of the AVEK iteration. For that purpose, we will first  derive
auxiliary results concerning deconvolution equations for sequences in Hilbert spaces that
are of interest in its own.

For the following it is helpful to identify any sequence $(a_k)_{k\in \N_0} \in \C^{\N_0}$
with  a formal power  series  $a  = \sum_{k=0}^\infty  a_k  X^k$. Here $X^k \in \C^{\N_0}$ is the sequence
defined by $X^k_k =1 $  and  $X^k_\ell = 0$ for $\ell \neq k$.   For two complex sequences
 $a, b \in \C^{\N_0}$, the  Cauchy product $a \ast b \in \C^{\N_0}$ is defined
by $(a\ast b)_ k \coloneqq  \sum_{j=0}^k a_j b_{k-j}$; see
\cite{henrici74applied}.
We say that $a \in \C^{\N_0}$ is invertible if there is  $b \in \C^{\N_0} $ with
$a \ast b =   (1, 0, \dots )$. We write $b \coloneqq a^{-1}$ and call it the reciprocal
formal power series of $a$, or simply the inverse of $a$.
Moreover one easily verifies  (see \cite{henrici74applied})
that the formal power series $a = \sum_{k=0}^\infty  a_k  X^k$ is invertible if and only if
$a_0 \neq 0$. In this case $b= a^{-1}$ is unique and defined by the recursion
$b_0  = 1 / a_0$ and  $
 b_k   = - \frac{1}{a_0} \sum_{j=0}^{k-1}b_j a_{k-j} $ for $k \geq 1$.
 One further  verifies  that  $\C^{\N_0}$ together with  point-wise addition
and scalar multiplication and the Cauchy product forms an associative algebra.

\subsection{Convolutions in Hilbert spaces}
\label{app:conv}

Throughout this subsection $\X$ denotes an arbitrary Hilbert space. For  $a  \in \C^{\N_0}$
and  $\signal \in  \X^{\N_0}$ define  the  convolution
$\signal  \ast  a  \in \X^{\N_0} $ by
\begin{equation*}
	 \forall k \in {\N_0} \colon \quad (\signal \ast a)_ k \coloneqq  \sum_{j=0}^k \signal_j a_{k-j} \,.
\end{equation*}
One verifies that   $(\signal \ast a) \ast b = \signal \ast (a \ast b)$
for  $a,b  \in \C^{\N_0}$ and  $\signal \in  \X^{\N_0}$.    Moreover, the set of  bounded sequences $\ell^{\infty} ({\N_0}, \X) \coloneqq \sset{ \signal \in \X^{\N_0} \mid \signal_k \text{ bounded}}$  forms  a Banach space together with the uniform norm
$\snorm{x}_\infty \coloneqq \sup \sset{\snorm{\signal_k} \mid k \in {\N_0}}$.
Finally,  $c_0 ({\N_0}, \X) \coloneqq \sset{ \signal \in \X^{\N_0} \mid \lim_{k \to \infty} \signal_k  = 0}$
denotes the space of sequences  in $\X$ converging to zero, {and $\ell^1({\N_0} , \C)\coloneqq \sset{ \signal \in \X^{\N_0} \mid \sum_{k=0}^\infty \abs{\signal_k} < \infty}$ the space of summable sequences.}

\begin{lemma} \label{lem:l1}
Let $b\in \ell^1({\N_0} , \C)$ and define $b^{(m)} \coloneqq (b_0, \dots, b_m, 0, \dots)$.
Then,
\begin{enumerate}
\item\label{lem:l1a} $\forall \signal \in c_0({\N_0} , \X) \colon \signal \ast b^{(m)} \in c_0({\N_0} , \X)$;
\item\label{lem:l1b} $\forall  \signal \in \ell^\infty({\N_0} , \X)    \colon \signal \ast b \in \ell^\infty({\N_0} , \X)
\wedge \lim_{m \to \infty }\snorm{\signal \ast b -  \signal\ast b^{(m)} }_\infty =0$;
\item\label{lem:l1c} $\forall  \signal \in c_0({\N_0} , \X)   \colon \signal \ast b \in c_0({\N_0} , \X)$.
\end{enumerate}
\end{lemma}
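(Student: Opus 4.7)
The three statements build on one another, so I would prove them in the order (a), (b), (c), treating (a) and (b) as the two independent estimates and then deducing (c) by a density/limit argument.

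For (a), I would compute the convolution directly. Since $b^{(m)}$ is supported in $\{0,\dots,m\}$, for any $k\ge m$ we have
\begin{equation*}
 (\signal \ast b^{(m)})_k \;=\; \sum_{j=0}^{k} \signal_j\, b^{(m)}_{k-j} \;=\; \sum_{i=0}^{m} b_i\, \signal_{k-i}.
\end{equation*}
This is a fixed finite linear combination of shifted copies of $\signal$. Applying the triangle inequality gives $\snorm{(\signal \ast b^{(m)})_k}\le \sum_{i=0}^m \sabs{b_i}\snorm{\signal_{k-i}}$, and since for every fixed $i$ the index $k-i$ tends to infinity with $k$ and $\signal\in c_0({\N_0},\X)$, the right-hand side tends to $0$. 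Hence $\signal\ast b^{(m)}\in c_0({\N_0},\X)$.

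For (b), I would establish a Young-type bound. For $\signal\in\ell^\infty({\N_0},\X)$ and any $b\in\ell^1({\N_0},\C)$,
\begin{equation*}
 \snorm{(\signal \ast b)_k} \;\le\; \sum_{j=0}^{k} \sabs{b_{k-j}}\,\snorm{\signal_j} \;\le\; \snorm{\signal}_\infty \sum_{j=0}^\infty \sabs{b_j} \;=\; \snorm{\signal}_\infty\,\snorm{b}_{\ell^1},
\end{equation*}
so $\signal\ast b\in\ell^\infty({\N_0},\X)$. Applying the same bound to $b-b^{(m)}$, whose $\ell^1$-norm equals $\sum_{j>m}\sabs{b_j}$, yields
\begin{equation*}
 \snorm{\signal\ast b - \signal\ast b^{(m)}}_\infty \;=\; \snorm{\signal\ast (b-b^{(m)})}_\infty \;\le\; \snorm{\signal}_\infty \sum_{j=m+1}^\infty \sabs{b_j},
\end{equation*}
and the right-hand side vanishes as $m\to\infty$ by absolute summability of $b$.

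For (c), I would combine (a) and (b). Given $\signal\in c_0({\N_0},\X)\subseteq\ell^\infty({\N_0},\X)$, by (a) each truncated convolution $\signal\ast b^{(m)}$ lies in $c_0({\N_0},\X)$, and by (b) the sequence $\signal\ast b^{(m)}$ converges to $\signal\ast b$ in the uniform norm. Since $c_0({\N_0},\X)$ is closed in $\ell^\infty({\N_0},\X)$ with respect to $\snorm{\cdot}_\infty$ (uniform limits of sequences tending to zero still tend to zero), it follows that $\signal\ast b\in c_0({\N_0},\X)$.

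The arguments are routine, so I do not anticipate a real obstacle; the only point requiring minor care is the bookkeeping of the convolution indices in (a), ensuring that the truncation reduces the sum to a finite fixed number of shifted terms whose shifts are all bounded independently of $k$.
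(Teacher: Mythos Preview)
Your proof is correct and follows essentially the same route as the paper: finite-support truncation for (a), a Young-type $\ell^1$--$\ell^\infty$ bound for (b), and closedness of $c_0$ in $\ell^\infty$ for (c). The only cosmetic difference is that in (b) you invoke the Young inequality directly on $b-b^{(m)}$, whereas the paper splits into the cases $k\le m$ and $k>m$ and computes the difference explicitly before arriving at the identical tail bound $\snorm{\signal}_\infty\sum_{j>m}\sabs{b_j}$.
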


\begin{proof}
\ref{lem:l1a} For $k \geq m$ we have  $(\signal \ast b^{(m)})_k  =  \sum_{j=0}^k \signal_j b_{k-j} =
\sum_{j=k-m}^k \signal_j b_{k-j}$. Hence $\signal \ast b^{(m)}$ converges to zero  because $\signal_j b_{k-j}$ does so.

\ref{lem:l1b} For $k \leq m$ we have $(\signal \ast b^{(m)})_k  =  \sum_{j=0}^k \signal_j b_{k-j} =
(\signal \ast b)_k $. For $k > m$ we have
\begin{itemize}
\item $(\signal \ast b)_k - (\signal \ast b^{(m)})_k  =  \sum_{j=0}^k \signal_j b_{k-j} - \sum_{j=k-m}^k \signal_j b_{k-j}
= \sum_{j=0}^{k-m-1} \signal_j b_{k-j}$;
\item $\snorm{(\signal \ast b)_k - (\signal \ast b^{(m)})_k}  \leq  \norm{x}_\infty  \sum_{j=0}^{k-m-1} \abs{b_{k-j}}
\leq  \snorm{x}_\infty  \sum_{j=m+1}^\infty \abs{b_{j}}$;
\item $\sum_{j=m+1}^\infty \abs{b_{j}} \to 0$ (because $\sum_{k \in {\N_0}} \abs{b_k}  < \infty$).
\end{itemize}
We conclude that $\snorm{(\signal \ast b)  - (\signal \ast b^{(m)}) }_\infty\leq \snorm{x}_\infty
\sum_{j=m+1}^{\infty} \abs{b_{j}}  \to 0$.

\ref{lem:l1c} Follows from \ref{lem:l1a}, \ref{lem:l1b} and  the closedness of $c_0 ({\N_0}, \X) $
in $\ell^{\infty} ({\N_0}, \X)$.
\end{proof}

As an application of Lemma~\ref{lem:l1} we
can show the following result, which is  the main ingredient for the
proof of Lemma~\ref{lem:diff}.

\begin{proposition}[A deconvolution problem]\label{prop:deconv}
For any sequence $\dsignal = (\dsignal_k)_{k =1}^\infty$ in $\X^{\N_0}$ and  any
$n \in {\N_0}$, the following implication
holds true:
\begin{equation*}
\lim_{k \to \infty} \sum_{j = 1}^n j \dsignal_{k-n+j} = 0
\implies  \lim_{k \to \infty} \dsignal_k = 0 \,.
\end{equation*}
\end{proposition}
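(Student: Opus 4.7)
The plan is to recast the hypothesis as a convolution equation in the formal-power-series algebra $\C[[X]]$ and then deconvolve it using Lemma~\ref{lem:l1}, which already asserts that convolution with an $\ell^1$ kernel preserves $c_0(\N_0,\X)$.

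Concretely, introduce the finitely supported complex sequence $a \in \C^{\N_0}$ with $a_i \coloneqq n-i$ for $0 \le i \le n-1$ and $a_i \coloneqq 0$ otherwise. Substituting $i = n-j$, one sees that for every $k \ge n-1$,
\begin{equation*}
(\dsignal \ast a)_k
= \sum_{i=0}^{n-1} (n-i)\, \dsignal_{k-i}
= \sum_{j=1}^{n} j\, \dsignal_{k-n+j},
\end{equation*}
so the hypothesis says precisely that $e \coloneqq \dsignal \ast a$ belongs to $c_0(\N_0,\X)$. Since $a_0 = n \neq 0$, the formal inverse $b \coloneqq a^{-1} \in \C^{\N_0}$ exists and satisfies $a \ast b = (1,0,0,\dots)$; by the associativity of the Cauchy product,
\begin{equation*}
\dsignal = \dsignal \ast (a \ast b) = (\dsignal \ast a) \ast b = e \ast b.
\end{equation*}
If we can establish $b \in \ell^1(\N_0,\C)$, then Lemma~\ref{lem:l1}\ref{lem:l1c}, applied with the $c_0$-sequence $e$ and the $\ell^1$-kernel $b$, immediately yields $\dsignal = e \ast b \in c_0(\N_0,\X)$, i.e., $\dsignal_k \to 0$, which is exactly the required conclusion.

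The main obstacle, then, is to prove $b \in \ell^1$. I would view $a$ as the polynomial $a(z) = n + (n-1)z + \cdots + z^{n-1}$ and analyse its roots. Since the coefficients are positive and strictly decreasing, the Enestr\"om--Kakeya theorem applies and gives $\abs{z} \ge 1$ for every root; strict inequality can be obtained by the short direct argument that multiplying $a$ by $(1-z)$ yields the identity $(1-z) a(z) = n - (z + z^2 + \cdots + z^n)$, and the equation $n = \sum_{i=1}^n z_0^i$ for a root $z_0 \neq 1$ is incompatible with the triangle inequality unless $\abs{z_0} > 1$. Consequently $1/a(z)$ is holomorphic on an open disc of some radius $\rho > 1$; Cauchy's coefficient estimate gives $\abs{b_k} \le C \rho^{-k}$, and hence $\sum_{k \ge 0}\abs{b_k} < \infty$. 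With this root/analyticity step in hand, the rest of the argument is purely formal-power-series bookkeeping plus a direct appeal to Lemma~\ref{lem:l1}\ref{lem:l1c}.
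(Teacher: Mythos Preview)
Your proposal is correct and follows the paper's own proof almost verbatim: define $a=(n,n-1,\dots,1,0,\dots)$, identify the hypothesis with $\dsignal\ast a\in c_0$, show $a^{-1}\in\ell^1(\N_0,\C)$ via analyticity of $1/a(z)$ on a disc of radius $>1$, and finish with Lemma~\ref{lem:l1}\ref{lem:l1c}. The only genuine difference is in how you locate the roots of $a(z)=n+(n-1)z+\cdots+z^{n-1}$ strictly outside the closed unit disc: the paper observes that $z^{n-1}a(1/z)$ is the derivative of $Q(z)=z+z^2+\cdots+z^n$ and invokes the Gauss--Lucas theorem, whereas you use the identity $(1-z)a(z)=n-\sum_{i=1}^n z^i$ together with the equality case of the triangle inequality (the appeal to Enestr\"om--Kakeya is in fact superfluous, since your direct argument already gives the strict inequality). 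Your route is slightly more elementary and self-contained; the paper's is a bit more structural but relies on a named theorem.
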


\begin{proof}
Set $a \coloneqq (n,n-1,\dots, 1,0, \dots)$ and suppose that $(\dsignal \ast a)_k  \to 0$  as $k \to \infty$.
We have to verify that  $\dsignal_k  \to 0$  as $k \to \infty$, which is divided in several steps.

 \begin{itemize}[wide]
 \item Step 1: All zeros of the polynomial $p \colon  \C \to \C \colon z \mapsto n +  (n-1) z + \cdots z^{n-1}$    are contained in $\set{z \in \C \mid \snorm{z} > 1}$.

Because $p(0) \neq 0$, in order to verify Step 1,  it is sufficient to show that all zeros of $p(1/z)$
are  contained in the  unit disc $B_1(0) = \set{z \in \C \mid \snorm{z} < 1}$.
Hence it is sufficient to show that   the polynomial $q(z) \coloneqq z^{n-1} p(1/z) \coloneqq n z^{n-1} +  (n-1) z^{n-2} + \cdots + 1$ has all zeros in  $B_1(0)$. Further note that $q(z) =  Q'(z)$, where $Q(z)\coloneqq  z^{n} +  z^{n-1} + \cdots + z$ has
the form $Q(z) = z \frac{z^n -1}{z-1}$.
Consequently, $\set{0} \cup \set{z \in \C \mid  z^{n} = 1 \wedge z \neq 1}$ is the set of zeros of $Q$.    The Gauss-Lukas theorem (see \cite[Theorem (6,1)]{marden66geometry}) states that all critical points
of a non-constant polynomial $f$ are contained in the convex hull $H$ of the set of zeros of $f$.
 If the zeros of $f$ are not collinear,  then no critical point lies on $\partial H$ unless it is a multiple zero of $f$.     Note that all zeros of $Q$ are simple, not collinear and contained
in $\overline{B_{1}(0)}$. According the Gauss-Lukas theorem all zeros of $q=Q'$ are contained in $B_{1}(0)$.    Consequently all zeros of $p$ are indeed contained in  $\sset{z \in \C \mid \snorm{z} > 1}$.

\item   Step 2: We have  $a^{-1} \in \ell^1(\N_0, \C)$.

All zeros of $p(z)$ are outside of $B_{1+\eps} (0)$ for some $\eps >0$ and
therefore   $1/p(z)$ is analytic in $B_{1+\eps} (0)$ and can be expanded in a power
series $ 1/p(z)  = \sum_{k \in \N_0} b_k z^k$. The radius of convergence is at least $1+\eps$
(as the radius of convergence  of  a function $f$ is the radius of the largest  disc where $f$ or an analytic
continuation of $f$ is analytic; see for example \cite[Theorem 3.3a]{henrici74applied}.)
We have
\begin{equation*}
1 = p(z)  \frac{1}{p(z)} = \sum_{j=0}^{n-1} a_j  z^j \sum_{k \in \N_0} b_k z^k
=   \sum_{k \in \N_0} (a \ast b)_k z^k \,.
\end{equation*}
Hence $a \ast b  = (1,0,0,\dots)$ and  $a^{-1} = b  \in  \ell^1(\N_0, \C)$.

 \item
Step 3: We are now ready to complete the proof. According to the assumption,
we have $\dsignal \ast a \in c_0(\N_0, \X)$. According to Step 2, we have  $a^{-1} \in \ell^1(\N_0, \C)$.
 Therefore Lemma~\ref{lem:l1}~\ref{lem:l1c}
implies that $\dsignal = (\dsignal \ast a) \ast a^{-1}  \in c_0(\N_0, \X)$.
\end{itemize}
\end{proof}

\subsection{Application to the AVEK iteration}

Now let  $\signal_k$ {be} defined by~\eqref{eq:avek-e1}, let  $\signal^* \in B_{\rho}(\signal_0)$ be an arbitrary solution to~\eqref{eq:ip} and assume that~\eqref{eq:tcc} and \eqref{eq:stepsz1} hold true.
We introduce the auxiliary sequences $\dsignal_k  \coloneqq \signal_{k+1} - \signal_k$,
$\zsignal_k  \coloneqq  \frac{1}{n} \sum_{j=1}^n j  \signal_{k-n+j}$  and
$\res_k   \coloneqq   \To_{\ii{k}}'(\signal_k)^*
\skl{ \To_{\ii{k}}(\signal_k) - \data_{\ii{k}}}$.
Here $\dsignal_k$ are the differences between two consecutive iterations that we show to converge to zero,
 $\zsignal_k$ will be required in the subsequent analysis, and $\res_k$ are the residuals.

\begin{lemma} \label{lem:diff1} \mbox{}
\begin{enumerate}
\item \label{lem:diff1a} $\lim_{k \to \infty}  \signal_{k+1} - \sfrac{1}{n} \sum_{l=k-n+1}^k \signal_\ell  = 0$;
\item \label{lem:diff1b} $\zsignal_{k+1} -  \zsignal_{k} = \signal_{k+1} - \sfrac{1}{n} \sum_{\ell=k-n+1}^k \signal_\ell$;
\item \label{lem:diff1c} $\lim_{k \to \infty} \zsignal_{k+1} -  \zsignal_{k}
= \sfrac{1}{n}  \lim_{k \to \infty}   \sum_{j=1}^n j  \dsignal_{k-n+j} =0$.

\end{enumerate}
\end{lemma}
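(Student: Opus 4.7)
My plan is to prove the three statements in order, since (a) feeds into (b) and (c) via the identity (b). Part (b) is a purely algebraic identity, part (a) needs the AVEK iteration formula~\eqref{eq:avek-e1} together with the summability of residuals from Lemma~\ref{lem:init}, and part (c) will then follow by a reindexing plus combining (a) and (b). I expect no serious obstacle; the main care is in bookkeeping of indices.

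For \ref{lem:diff1a}, I would substitute \eqref{eq:avek-e1} directly to get
\[
\signal_{k+1} - \frac{1}{n}\sum_{\ell = k-n+1}^k \signal_\ell
= -\frac{1}{n}\sum_{\ell = k-n+1}^k s_\ell\,\res_\ell.
\]
Then I would bound each term in the sum by $s_\ell\snorm{\To_{\ii{\ell}}'(\signal_\ell)}\snorm{\To_{\ii{\ell}}(\signal_\ell)-\data_{\ii{\ell}}}$. Assumption~\ref{ass1a} provides a uniform bound on $\snorm{\To_i'(\signal_\ell)}$ (since $\signal_\ell \in B_\rho(\signal_0)$ by Proposition~\ref{prop:mon}), the step-size condition~\eqref{eq:stepsz1} keeps $s_\ell$ bounded, and Lemma~\ref{lem:init}\ref{prop:init2} forces $\snorm{\To_{\ii{k}}(\signal_k)-\data_{\ii{k}}}\to 0$ as $k\to\infty$. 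Hence each $s_\ell\res_\ell\to 0$, so the average over any rolling window of length $n$ also vanishes.

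For \ref{lem:diff1b}, I would just reindex. Writing $\zsignal_{k+1} = \tfrac{1}{n}\sum_{j=1}^n j\,\signal_{k+1-n+j}$, a direct difference with $\zsignal_k = \tfrac{1}{n}\sum_{j=1}^n j\,\signal_{k-n+j}$ — grouping the terms indexed by $\ell$ ranging over $\{k-n+1,\ldots,k+1\}$ — yields exactly $\signal_{k+1} - \tfrac{1}{n}\sum_{\ell=k-n+1}^k \signal_\ell$. This is a short arithmetic check and I would not belabor it.

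For \ref{lem:diff1c}, I would observe the alternative telescoping
\[
\zsignal_{k+1} - \zsignal_k
= \frac{1}{n}\sum_{j=1}^n j\bigl(\signal_{k+1-n+j} - \signal_{k-n+j}\bigr)
= \frac{1}{n}\sum_{j=1}^n j\,\dsignal_{k-n+j},
\]
which is the first equality claimed in (c). The second equality then follows by combining this representation with (b) and (a): by (b) the left-hand side equals $\signal_{k+1} - \tfrac{1}{n}\sum_{\ell=k-n+1}^k\signal_\ell$, and by (a) this tends to $0$. This completes the chain and makes the limit available for use in Proposition~\ref{prop:deconv}, which is exactly how Lemma~\ref{lem:diff} will be deduced.
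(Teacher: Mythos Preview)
Your proposal is correct and follows essentially the same route as the paper: part~\ref{lem:diff1a} via the iteration formula~\eqref{eq:avek-e1} and the vanishing of $s_\ell\res_\ell$, part~\ref{lem:diff1b} by direct reindexing, and part~\ref{lem:diff1c} by combining the two. The only minor point is your remark that ``\eqref{eq:stepsz1} keeps $s_\ell$ bounded''---this needs the (harmless) observation that some $\To_i'(\signal)\neq 0$ on $B_\rho(\signal_0)$, else all $\res_\ell=0$ trivially; alternatively one can bound $(s_\ell\snorm{\res_\ell})^2\le s_\ell\snorm{\To_{\ii{\ell}}(\signal_\ell)-\data_{\ii{\ell}}}^2$ directly from~\eqref{eq:stepsz1} and use the summability in the proof of Lemma~\ref{lem:init}.
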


\begin{proof}
\ref{lem:diff1a}:  By  the definition of $\signal_k$, $\res_k$
we have  $\signal_{k+1}  = \frac{1}{n} \sum_{\ell = k-n+1}^k  \signal_\ell  - s_\ell  \res_\ell$.
 Therefore
\begin{equation*}
 \Bigl\lVert \signal_{k+1} - \frac{1}{n} \sum_{l=k-n+1}^k \signal_\ell \Bigr\rVert
 = \Bigl\lVert \frac{1}{n} \sum_{\ell = k-n+1}^k  s_\ell \res_\ell \Bigr\rVert
 \leq \frac{1}{n}   \sum_{\ell = k-n+1}^k  s_\ell \snorm{\res_\ell} \,.
\end{equation*}
As we already know that $s_\ell\snorm{\res_\ell} \to 0$, the claim follows.

\ref{lem:diff1b}: We have
\begin{align*}
\zsignal_{k+1} -  \zsignal_{k}
&
= \frac{1}{n} \sum_{j=1}^n j  \signal_{k-n+j+1} -\frac{1}{n} \sum_{j=1}^n j  \signal_{k-n+j}
\\&=  \signal_{n+1} + \frac{1}{n} \sum_{j=1}^{n-1} j  \signal_{k-n+j+1} - \frac{1}{n} \sum_{j=2}^n j  \signal_{k-n+j} -
\frac{1}{n} \signal_{k-n+1}
\\&=  \signal_{n+1} + \frac{1}{n} \sum_{j=2}^{n} (j-1)  \signal_{k-n+j} - \frac{1}{n} \sum_{j=2}^n j  \signal_{k-n+j} -
\frac{1}{n} \signal_{k-n+1}
\\&=  \signal_{n+1} - \frac{1}{n} \sum_{j=2}^{n}  \signal_{k-n+j} - \frac{1}{n} \signal_{k-n+1}
=  \signal_{n+1} - \frac{1}{n} \sum_{j=1}^{n}  \signal_{k-n+j}  \,.
\end{align*}

\ref{lem:diff1c}: Follows from \ref{lem:diff1a}, \ref{lem:diff1b}.
\end{proof}

\subsubsection*{Proof of Lemma~\ref{lem:diff}}

Lemma~\ref{lem:diff} now is an immediate consequence of
Lemma~\ref{lem:diff1}  and Proposition~\ref{prop:deconv}.
In fact, from  Lemma~\ref{lem:diff1}~\ref{lem:diff1c} we know that
$\lim_{k \to \infty}   \sum_{j=1}^n j   \dsignal_{k-n+j} =0$ for $k \to \infty$.
Then the assertion follows from Proposition~\ref{prop:deconv}.

\end{document}